\date{}
\definecolor{sah}{rgb}{0.66,0.33, 0.04}
\definecolor{adel4}{cmyk}{1,0,0,0}
\definecolor{adel3}{rgb}{0.66,0.33, 0.04}
\definecolor{adel1}{cmyk}{0,0.20,1,0}
\definecolor{adel2}{cmyk}{0,0.40,1,0.30}
\definecolor{adel0}{rgb}{0.99,0.60, 0.30}
\definecolor{trut}{rgb}{0.99,0.80, 0.00}
\definecolor{trus}{rgb}{0.00, 0.50, 0.00}
 \definecolor{trust}{rgb}{0.99, 0.99, 0.80}
\definecolor{MaCouleur}{rgb}{0,0.9,0.3}
\def\virgp{\raise 2pt\hbox{,}}
\def\({\left(}
\def\){\right)}
\def\<{\langle}
\def\>{\rangle}
\theoremstyle{plain}
\newtheorem{Theo}{Theorem}[section]
 \newtheorem{lem}[Theo]{Lemma}
 \newtheorem{prop}[Theo]{Proposition}
 \newtheorem{Coro}[Theo]{Corollary}
 \newtheorem{defi}[Theo]{ Definition}
 \newtheorem{rema}[Theo]{Remark}
\newcommand{\supp}{\textnormal{supp }}
\newtheorem*{gracies}{Acknowledgements}
\newcommand{\CC}{\mathbb{C}}  
\newcommand{\NN}{\mathbb{N}} 
\newcommand{\Div}{{\textnormal{div}}}
\newcommand{\rot}{{\textnormal{curl}}}
\newcommand{\RR}{{\mathbb R}}
\newcommand{\EE}{{\varepsilon}} 
 \title[]{On the Yudovich solutions for the ideal MHD equations}
\subjclass[2000]{35Q35, 76B03, 76W05}
\keywords{ Inviscid MHD equations, vortex patches, potential theory}
\author[T. Hmidi]{Taoufik Hmidi}
\address{IRMAR, Universit\'e de Rennes 1\\ Campus de
Beaulieu\\ 35~042 Rennes cedex\\ France}
\email{thmidi@univ-rennes1.fr}
\begin{document}

\maketitle

\begin{abstract}
In this paper, we address the problem of weak solutions of  Yudovich type for the inviscid MHD equations in two dimensions.  The local-in-time existence and uniqueness of these  solutions  sound to be hard to achieve  due  to some  terms involving  Riesz transforms in the vorticity-current formulation. We shall prove  that the vortex patches with smooth boundary  offer   a suitable class of initial data for which the problem can be solved. However  this is only done under a geometric constraint  by assuming  the  boundary of the initial vorticity to be  frozen  in a magnetic field line. 

We shall also discuss   the  stationary patches for the incompressible Euler system $(E)$ and the MHD system.  For example, we prove that a stationary simply connected  patch with rectifiable boundary for  the system $(E)$ is necessarily the  characteristic function of a disc. %The proof of the last point uses Fraenkel's Theorem \cite{Fran} combined with some refined estimates on the conformal mapping behavior near the boundary. 
\end{abstract}
\begin{quote}
\footnotesize\tableofcontents
\end{quote}

\section{Introduction}
In this paper we shall consider a fluid which is electrically conducting and moves through a  prevalent magnetic fields. The interaction between the motion and the magnetic  fields are governed by the coupling between Navier-Stokes system  and Maxwell's equations in the magnetohydrodynamics approximation. The basic equations  of hydromagnetics are given by,
\begin{equation}
\label{MHD0}
 \left\{ 
\begin{array}{ll} 
\partial_t v+v\cdot\nabla v-\nu\Delta v+\nabla p=b\cdot\nabla b,\qquad t>0,\, x\in \mathbb R^d,\, d\in\{2,3\},  \\
\partial_t b+v\cdot\nabla b-\mu\Delta b=b\cdot\nabla v,\\
\textnormal{div }v=\textnormal{div }b=0,\\
v_{\mid t=0}= v_{0},b_{\mid t=0}= b_{0},
\end{array} \right.    
     \end{equation}
 where $v$ denotes the velocity of the fluid particles  and $b$ the magnetic field which are both assumed to be solenoidal. The pressure $p$ is a scalar function that can be recovered from the velocity and the magnetic field by inverting an elliptic equation. The parameters $\nu,\mu\geq0$ are called the viscosity and  the resistivity, respectively.
 
 \quad  The major use of MHD is  in liquid metal and  plasma physics and the  derivation of the governing equations   can be done by using Maxwell's equations where we neglect the displacement currents,
 $$
 \textnormal{div }b=0,\quad\textnormal{curl }b=4\pi  J\quad\hbox{and}\quad\textnormal{curl } E=-\mu\partial_t b
 $$
 with $E$  the electric field, $J$ the current density and $\mu$ the magnetic permeability. To complete the fields equations  we need   an equation for the current density $J$ which requires some assumption on the nature of the fluid. This is described by Ohm's law
 $$
 J=\sigma\big(E+\mu\, v\times b\big).
 $$
 The combination of the preceding equations will lead to the second equation of the magnetohydrodynamics system \eqref{MHD0}.  When the conducting fluid is in motion currents are induced  and  the magnetic field will in turn act on the fluid  according to  Lorenz force $\mathcal{L}$
 $$
 \mathcal{L}\triangleq \mu J\times b=\frac{\mu}{4\pi}\textnormal{curl }b\times b,\quad \textnormal{curl }b\times b=b\cdot\nabla b-\frac12\nabla|b|^2.
 $$
 Observe that in the foregoing formula  Lorentz force is decomposed into two parts: the first one which appears in the  first equation of \eqref{MHD0} is called a {{curvature force}} and acting toward the center of curvature of the field lines. The second one is a magnetic pressure which acts perpendicular to the magnetic fields and it is implicitly contained in the pressure term $p$. 
 For a general review about the derivation of the MHD equations and some dynamical aspects of the interaction between the magnetic fields and the velocity we can consult the references \cite{Bis,Chan,Dav,Duvaut}. 
 
\quad The theoretical study of  the  MHD system has started with the pioneering work of Alfv\'en \cite{Alf} who  was  the first to describe the generation of electromagnetic-hydrodynamic  waves by conducting liquid  using the MHD equations.  From mathematical point of view a lot of  progress has be done from that time. For example,   the local well-posedness  theory which is a  central subject in modern PDEs is carried out   in various  classical function spaces, see for instance  \cite{Caf,Casella,Chen1,Fef, Jiu,Kozono,Zhang,Mia2,Tem,Wu} and the references therein.   However the global existence of such solutions is an  open problem  except in two dimensions  with the full dissipation $\nu,\mu>0.$ \

\quad From now onwards we shall   focus  only  on ideal  MHD fluid corresponding to   $\nu=\mu=0$ and therefore  the equations become  \begin{equation}
\label{MHD}
 \left\{ 
\begin{array}{ll} 
\partial_t v+v\cdot\nabla v+\nabla p=b\cdot\nabla b,\qquad x\in \mathbb R^d,\, t>0, \\
\partial_t b+v\cdot\nabla b=b\cdot\nabla v\\
\textnormal{div }v=\textnormal{div }b=0,\\
v_{\mid t=0}= v_{0},b_{\mid t=0}= b_{0},
\end{array} \right.    
     \end{equation}
     \quad One of the most important consequence of the second equation of \eqref{MHD} and known in the literature by {\it Alfv\'en's theorem} is the freezing of the magnetic field lines into the fluid; this means  that the magnetic  lines follow the motion of the fluid particles. We note that the ideal MHD is quite successful model  for large-scale plasma physics and can be illustrated in various phenomena   in Earth's magnetosphere and on the sun like the  {\it sunspots}.  As we shall see  the frozen-in magnetic fields  will be of crucial importance in our study of weak solutions of Yudovich type in the two dimensional space.  
    
\quad It is in some extent true   that the system \eqref{MHD} is at a formal level a perturbation of the incompressible Euler equations and therefore it is legitimate to see whether the known results for Euler equations work for the MHD system as well. For example, it is proved in  \cite{Sch, Secc} that  the commutator theory developed by  Kato and Ponce in  \cite{Kato} can be successfully implemented leading to  the local well-posedness for \eqref{MHD}  when the initial data $v_0, b_0$ belong to the sub-critical Sobolev space $H^s, s>\frac{d}{2}+1$ and the maximal solution  satisfies  $v, b\in \mathcal{C}([0,T^\star); H^s)$.  Whether or not the \mbox{lifespan  $T^\star$} is finite is an outstanding open problem even in two dimensions. However it is well-known that for planar motion and in the absence of the magnetic \mbox{field $b_0=0$,} classical  solutions are global in time since  the \mbox{vorticity  $\omega\triangleq \partial_1 v^2-\partial_2 v^1$} is transported by the flow, namely we have
 \begin{equation}\label{tran30}
 \partial_t\omega+v\cdot\nabla \omega=0.
 \end{equation}
 This shows that Euler equations have a Hamiltonian structure  and  gives in turn an infinite family of conservation laws such as $\|\omega(t)\|_{L^p}=\|\omega_0\|_{L^p}$ for any $p\in [1,\infty].$ These global  a priori estimates  allow \mbox{Yudovich \cite{Y1}} to  relax  the classical regularity and establish the global existence and uniqueness only with $\omega_0\in L^1\cap L^\infty$. Unfortunately, as we shall see the structure of the vorticity is instantaneously altered for the model \eqref{MHD} due  to the effects of the magnetic fields. This fact will be a source of   at least two main difficulties. The first one is connected to the global existence of classical solutions where no strong global  a priori estimates are known till now. The second one concerns Yudovich solutions whose construction is not at all clear even for short time. This  can be clarified  through the equations governing     the vorticity and the current density $j=\partial_1 b^2-\partial_2 b^1$, 
 \begin{equation}
\label{E0}
 \left\{ 
\begin{array}{ll} 
\partial_t \omega+v\cdot\nabla\omega=b\cdot\nabla j\\
\partial_t j+v\cdot\nabla j=b\cdot\nabla\omega+2\partial_1b\cdot\nabla v^2-2\partial_2b\cdot\nabla v^1.
\end{array} \right.    
     \end{equation} 
     We observe that the magnetic field contributes in the last nonlinear part of the second equation with the  quadratic term
     \begin{equation}\label{source}
\mathcal{H}(v,b)\triangleq  2\partial_1b\cdot\nabla v^2-2\partial_2b\cdot\nabla v^1
\end{equation}
 which can be  described as a linear superposition of the quantities
     $
     \mathcal{R}_{ik}\omega\mathcal{R}_{lm}j,
     $
     where $\mathcal{R}_{ik}=\partial_{i}\partial_k\Delta^{-1} $ is the iterated Riesz transform.
     The main step when we wish to deal with Yudovich solutions is to be able to propagate the $L^p\cap L^\infty$ bound of the vorticity for some finite value of $p$. This problem is not trivial due to two effects. The first one is  the lack of continuity of Riesz transform on the bounded functions; and the second one concerns the nonlinear structure of the term $\mathcal{H}(v,b).$ We point out that even for finite value of $p$  no global a priori estimates are known in the literature and their persistence  requires  the velocity to be in the Lipschitz class.  
     
 \quad    One of the main scope of this paper is to be able to construct local unique solutions for a sub-class of Yudovich data. In broad terms, we shall see  that the vortex patches offer a suitable class of initial data for which the construction of Yudovich solutions is possible.
     % is to show that we can construct Yudovich solutions for the system \eqref{E0} when the initial data vorticity is a smooth patch and the current density is a smooth function.
      But before stating our result let us briefly discuss what is known for Euler equations with this special initial data. First, we say that a vorticity $\omega_0$ is a patch if it  is constant inside a bounded set $\Omega$ and vanishes outside, namely and by normalization we can take  $\omega_0=\chi_{\Omega}$. It is clear from the transport equation \eqref{tran30} that this structure is not altered through the time and the  vorticity remains always a patch. This means that   for any positive time $\omega(t)=\chi_{\Omega_t},$ with $\Omega_t\triangleq \psi(t,\Omega)$ being the image of $\Omega$ by the flow.  A connected problem that was raised first in the numerical studies and leading later to a nice  theoretical  achievement 
      was to understand whether or not  the boundary develops  finite-time singularities.  In \cite{che1}, Chemin proved that when we start  with a smooth boundary, say    $\partial \Omega$ belongs to the H\"{o}lderian class $C^{1+\varepsilon}, 0<\varepsilon<1,$ then for any time $t$ the boundary $\partial \Omega_t$ remains in the same class. The basic idea of Chemin is that only the co-normal regularity $\partial_X\omega$ of the vorticity contributes for the Lipschitz norm of the velocity. The choice of the vector fields $(X_t)$ can be done in such a way that it should be  tangential  to $\partial\Omega_t$ for any positive time. This is satisfied when it is  transported by the flow, that is,
      \begin{equation}\label{tr71}
      \partial_t X+v\cdot\nabla X=X\cdot\nabla v.
      \end{equation}  
   One of the main feature of these vector fields is their commutation  with the transport \mbox{operator $\partial_t+v\cdot\nabla $,} which leads to the important equation
   \begin{equation}\label{master}
  (\partial_t +v\cdot\nabla) \partial_X\omega=0. 
   \end{equation}
   This means that the co-normal regularity of the vorticity is also transported by the flow and this is the crucial tool in the framework of the vortex patches.  
   
  \quad Our main concern here  is to valid similar results for the MHD equations and as we shall see the situation is slightly more complex. The presence of the magnetic field will contribute with two opposite effects. First, it will destroy the structure of the vortex patches and introduce nonlocal singular operators of Calder\'on-Zygmund type. Second, the fact that the magnetic field is transported by the flow- it is a push-forward vector field- will be of great importance especially for measuring  the co-normal regularity of the vorticity.

\quad   Before stating our contribution in  this subject we shall discuss a little bit an intermediary problem concerning the stationary patches. This consists in  finding  simply connected bounded domains $\Omega$ and $D$ such that $\omega(t)=\chi_{\Omega}$ and $j(t)=\chi_D$ define a solution for the vorticity-current formulation \eqref{E0}.  We can analyze  the same problem  for the $2d$ incompressible Euler equations. The only example that we know for this latter model  is the Rankine vortices corresponding to the domains with circular shape.  We will show that these are in fact  the only stationary patches.  This expected but non trivial result can be obtained from Fraenkel's theorem on potential theory as we shall see later in Section \ref{stat} and whose proof is based on many tools of elliptic equations.  For the MHD system, we will conduct the same study with the same tools  and our results can be summarized in the following theorem. %More general statements will be given in Section \ref{stat}.

  \begin{Theo}\label{The44}
  The following assertions  hold true.
    \vspace{0,2cm}
    
  ${\bf{I)}}$ Let $\Omega$ be a simply connected domain with rectifiable Jordan boundary.  Then $\chi_\Omega$ is a stationary  patch for the $2d$ Euler equations if and only if  $\Omega$ is  a  ball.
  \vspace{0,2cm}
  
 ${\bf{II)}}$  
Let $D$ and $\Omega$  be two bounded domains   and $\omega_0={\chi}_{\Omega}$,   $j_0={\chi}_{D}.$
\begin{enumerate}
\item If $D=\Omega$ then $(\omega_0, j_0)$ is a stationary solution for the MHD system \eqref{MHD}.
\item If the boundaries $\partial D$ and $ \partial\Omega$ are disjoint and  rectifiable   then $(\omega_0, j_0)$ is a stationary solution for the system \eqref{MHD} if and only if  $\Omega$ and $D$ are concentric balls.

\end{enumerate}
\end{Theo}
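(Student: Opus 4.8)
The plan is to reduce the notion of a stationary patch to an overdetermined elliptic (free-boundary) problem for the Biot--Savart stream function and then to invoke a Serrin/Fraenkel-type symmetry theorem. Throughout, $v=\nabla^{\perp}\Delta^{-1}\omega$ and $b=\nabla^{\perp}\Delta^{-1}j$ denote the fields recovered by Biot--Savart, $p$ is recovered from the elliptic equation obtained by taking the divergence of the momentum equation, and I write $\psi_{\omega}=\Delta^{-1}\chi_{\Omega}$, $\psi_{j}=\Delta^{-1}\chi_{D}$. For $\omega=\chi_{\Omega}\in L^{\infty}$ one has $\psi_{\omega}\in C^{1,\alpha}_{\mathrm{loc}}$ (Calder\'on--Zygmund), $\psi_{\omega}$ harmonic in $\R^{2}\setminus\overline{\Omega}$ with $\psi_{\omega}(x)=\tfrac{|\Omega|}{2\pi}\log|x|+O(1)$ at infinity, and $v$ real-analytic off $\partial\Omega$; similarly for $\psi_{j}$. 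A patch $\chi_{E}$ (with $\partial E$ a rectifiable Jordan curve) transported weakly by a divergence-free field $w$ continuous up to $\partial E$ stays fixed if and only if $w\cdot\nabla\chi_{E}=0$ in $\mathcal{D}'$, i.e.\ $w$ is tangent to $\partial E$; and when $w=\nabla^{\perp}\Delta^{-1}\chi_{E}$ is the self-induced field, tangency is equivalent to $\Delta^{-1}\chi_{E}$ being constant along $\partial E$ (since $w\cdot n_{E}=\pm\partial_{\tau}\Delta^{-1}\chi_{E}$ and $\partial E$ is connected).

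For assertion ${\bf I)}$ the \emph{if} part is immediate: if $\Omega$ is a disc then $\chi_{\Omega}$ is radial, $v$ is azimuthal hence tangent to $\partial\Omega$, and $\nabla p=-\,v\cdot\nabla v$ is a radial gradient. For the \emph{only if} part, stationarity forces $\psi_{\omega}$ to be constant on $\partial\Omega$, so $\psi_{\omega}$ solves the free-boundary problem $\Delta\psi_{\omega}=\chi_{\Omega}$ on $\R^{2}$, $\psi_{\omega}\equiv c$ on $\partial\Omega$, $\psi_{\omega}-\tfrac{|\Omega|}{2\pi}\log|x|$ bounded at infinity. Since $\int_{\partial\Omega}\partial_{n}\psi_{\omega}=|\Omega|>0$ and the interior torsion function $\psi_{\omega}-c$ is subharmonic and vanishes on $\partial\Omega$, a Hopf-type argument gives $\nabla\psi_{\omega}\neq 0$ on $\partial\Omega$, so $\partial\Omega$ is a regular level set of the $C^{1,\alpha}$ function $\psi_{\omega}$; Fraenkel's potential-theoretic symmetry theorem (moving planes, available in this regularity) then forces $\Omega$ to be a ball. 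This rigidity step is the main obstacle of the whole theorem — the remaining parts are comparatively soft.

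Assertion ${\bf II)(1)}$ is a direct verification: $D=\Omega$ forces $\omega_{0}=j_{0}=\chi_{\Omega}$, hence $v=b=\nabla^{\perp}\Delta^{-1}\chi_{\Omega}$; the momentum equation of \eqref{MHD} becomes $v\cdot\nabla v+\nabla p=v\cdot\nabla v$ (so $p\equiv$ const), the induction equation becomes $v\cdot\nabla v=v\cdot\nabla v$, and $\D v=\D b=0$; thus $(\omega_{0},j_{0})$ is stationary with no constraint on $\Omega$. The \emph{if} part of ${\bf II)(2)}$ is similar: if $\Omega,D$ are balls with common centre $x_{0}$, then $v=f(|x-x_{0}|)e_{\theta}$ and $b=g(|x-x_{0}|)e_{\theta}$ with smooth radial profiles, so $v\cdot\nabla v=-\tfrac{f^{2}}{r}e_{r}$ and $b\cdot\nabla b=-\tfrac{g^{2}}{r}e_{r}$ are radial gradients, $v\cdot\nabla b=-\tfrac{fg}{r}e_{r}=b\cdot\nabla v$, and \eqref{MHD} holds stationarily with $p=p(r)$ recovered from $p'(r)=(f^{2}-g^{2})/r$.

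The heart of the statement is the \emph{only if} part of ${\bf II)(2)}$, where the new feature relative to Euler is the term $\mathcal{H}(v,b)$ of \eqref{source}. Suppose $(\chi_{\Omega},\chi_{D})$ is stationary with $\partial\Omega\cap\partial D=\emptyset$. The key point is a splitting by singularity type: $\mathcal{H}(v,b)$ is, as recalled in the introduction, a finite superposition of products $\mathcal{R}_{ik}\chi_{\Omega}\,\mathcal{R}_{lm}\chi_{D}$, hence lies in $L^{r}_{\mathrm{loc}}$ for every $r<\infty$ and puts no mass on a Lebesgue-null set, whereas $v\cdot\nabla\omega$, $b\cdot\nabla j$, $v\cdot\nabla j$, $b\cdot\nabla\omega$ are Radon measures carried by the (disjoint, rectifiable) curves $\partial\Omega,\partial D,\partial D,\partial\Omega$ respectively — these are well defined because $v$ is smooth near $\partial D$ and $b$ near $\partial\Omega$ once the boundaries are disjoint. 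The stationary first equation of \eqref{E0} reads $v\cdot\nabla\omega=b\cdot\nabla j$, and two measures singular to Lebesgue measure and carried by disjoint sets can agree only if both vanish, so $v\cdot n_{\Omega}=0$ on $\partial\Omega$ and $b\cdot n_{D}=0$ on $\partial D$. Applying assertion ${\bf I)}$ to $\psi_{\omega}$ and to $\psi_{j}$, both $\Omega$ and $D$ are balls, say $\Omega=B(x_{\Omega},r_{\Omega})$, $D=B(x_{D},r_{D})$. To get $x_{\Omega}=x_{D}$: localize the stationary second equation of \eqref{E0} to a neighbourhood of $\partial D$ missing $\partial\Omega$, where it reduces to $v\cdot\nabla j=\mathcal{H}(v,b)$; equating the curve-supported measure on the left with the $L^{r}$ function on the right forces $v\cdot\nabla j=0$, i.e.\ $v$ tangent to $\partial D$. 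But $v$ is the azimuthal velocity of the circular patch $\chi_{B(x_{\Omega},r_{\Omega})}$, nowhere zero away from $x_{\Omega}$, so the Jordan curve $\partial D$ must be one of its streamlines, a circle centred at $x_{\Omega}$; hence $x_{D}=x_{\Omega}$ and $\Omega,D$ are concentric balls. The only delicate points here are the rigorous splitting of distributions by singularity type and the smoothness of $v,b$ near the respective boundaries, both of which are harmless precisely because $\partial\Omega$ and $\partial D$ are disjoint.
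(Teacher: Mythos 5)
Your overall strategy coincides with the paper's: reduce stationarity to the constancy of the Newtonian potential on the boundary, invoke Fraenkel's characterization of balls, use the disjointness of $\partial\Omega$ and $\partial D$ to split the equation into mutually singular pieces, and obtain concentricity from the fact that $\partial D$ must be a streamline of the circular patch. Parts ${\bf II)(1)}$, the ``if'' directions, and the concentricity step are essentially the paper's arguments.

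The genuine gap is exactly at the point the paper identifies as the crux: passing from the distributional identity $\textnormal{div}(v\,\chi_\Omega)=0$ to ``$v$ is tangent to $\partial\Omega$'' and then to ``$\Delta^{-1}\chi_\Omega$ is constant on $\partial\Omega$'' when $\partial\Omega$ is only a \emph{rectifiable} Jordan curve. Your parenthetical justification presupposes that $\nabla\chi_\Omega=-\vec n\,d\sigma_{\partial\Omega}$ with an $\mathcal{H}^1$-a.e.\ defined normal, and that vanishing of the resulting measure yields $v\cdot\vec n=0$ at $\mathcal{H}^1$-a.e.\ point of the \emph{topological} boundary. For a merely rectifiable curve this requires the De Giorgi--Federer structure theory for sets of finite perimeter \emph{and} the identification of the reduced boundary with the full boundary up to $\mathcal{H}^1$-null sets; only then does the Lipschitz composition $s\mapsto\Delta^{-1}\chi_\Omega(\gamma(s))$ (absolutely continuous with a.e.\ vanishing derivative) force constancy. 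None of this is carried out, and the paper explicitly rejects this route as ``not so explicit to allow exploitable computations,'' replacing it by the conformal-mapping argument: Carath\'eodory extension, the Hardy-space fact $\Phi'\in H^1$ with $\lim_{r\to1}\int_0^{2\pi}|\Phi'(re^{i\theta})-\Phi'(e^{i\theta})|\,d\theta=0$, Gauss--Green on the analytic curves $\partial\Phi(\mathbb{D}_r)$, and a passage to the limit $r\to1$ yielding $\frac{d}{d\theta}\{\varphi_0(\Phi(e^{i\theta}))\}=0$ a.e.\ together with absolute continuity. Either you must supply the GMT argument in full (including why $\mathcal{H}^1(\partial\Omega\setminus\partial^*\Omega)=0$ for a rectifiable Jordan domain), or you must use the conformal-map approximation; as written the step is asserted, not proved. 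A secondary, minor point: the Hopf-lemma detour to show $\nabla\psi_\omega\neq0$ on $\partial\Omega$ is both unjustified for rectifiable boundaries (no interior ball condition is available) and unnecessary, since Fraenkel's theorem as used in the paper requires no regularity of the boundary and no nondegeneracy of the potential.
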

Few remarks are in order.
\begin{rema}
In the statement ${\bf{II)}}-(1)$ of the preceding theorem, there are no constraints on the domain $\Omega$. This is due to the special structure of the inviscid MHD equations: if we take $b_0=v_0$ then we can readily check  that this corresponds to a stationary solution for \eqref{MHD} without pressure.  This illustrates one of  the deepest  and rigid geometric structure of the magnetic field which forces here the motion to be independent in time. 

\end{rema}
\begin{rema}
The stationary patches of Euler equations appear as a special case  of rotating patches whose study were done in a series of papers such as \cite{B,HMV}.

\end{rema}
Some additional  remarks and comments will be raised in Section \ref{stat}. Now we shall come back to  the consideration of Yudovich solutions in the framework of  vortex patches and  we shall formulate a general  statement   later in Theorem \ref{thm2}.
     
     \begin{Theo}\label{thm1}
     Let  $\Omega$ be a simply connected domain of class $W^{2,\infty}$ and $\omega_0=\chi_{\Omega}$. Let $b_0=\nabla^\perp\varphi_0$ be a divergence-free magnetic field  such that its current density $j_0$ belongs to $L^1\cap W^{1,p}$, with $2<p<\infty$.
%     \begin{equation}\label{vortas}
%     \textnormal{div}{(b_0 j_0)}\in C^{\varepsilon-1}.
%     \end{equation}
     Assume that:
     \begin{enumerate} 
     \item Compatibility assumption:
     $$
     b_0\cdot n=0\quad \hbox{on}\quad \partial \Omega,
     $$
     where $n$ is a normal vector to the boundary $\partial\Omega$.
     \item There exist two constants $\delta,\eta>0$ such that
     \begin{equation}\label{Equiv1}
    \forall \, x\in \RR^2,\quad |\varphi_0(x)-\lambda|< \eta \Longrightarrow |b_0(x)|> \delta
     \end{equation}
     where $\lambda$ is the value of $\varphi_0$ on the boundary $\partial\Omega$.
     \end{enumerate}
     Then there exists $T>0$ and a unique solution $(v,b)$ for the system \eqref{MHD}  with 
$$\omega, j\in L^\infty([0,T];L^1\cap L^\infty)\quad\hbox{and}\quad v,b\in L^\infty([0,T]; \textnormal{Lip}).
$$
Moreover, for any $t\in [0,T],$ the boundary of $\psi(t,\Omega)$ belongs to $W^{2,\infty}$.
     \end{Theo}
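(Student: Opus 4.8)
The plan is to run a fixed-point / a priori estimate scheme on the vorticity--current formulation \eqref{E0}, exploiting the two geometric hypotheses to tame the Riesz-transform source term $\mathcal{H}(v,b)$. The key structural observation is that $b$ is a \emph{push-forward} vector field: if $b_0=\nabla^\perp\varphi_0$ with $\varphi_0$ transported by the flow, then $b(t)=\nabla^\perp(\varphi_0\circ\psi(t)^{-1})$ stays a gradient-perp field, and the compatibility assumption $b_0\cdot n=0$ on $\partial\Omega$ guarantees that $\partial\Omega$ is itself a level set of $\varphi_0$ (value $\lambda$), hence $\partial\Omega_t=\psi(t,\partial\Omega)$ is a level set of the transported potential. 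So $b$ is automatically tangent to the patch boundary for all time, which is exactly what one needs for a Chemin-type argument. I would introduce two families of transported vector fields: $X_t$ tangent to $\partial\Omega_t$ (solving \eqref{tr71}), and I would use $b$ itself as a second admissible ``conormal'' direction. The point of hypothesis (2), equation \eqref{Equiv1}, is that in a neighborhood of the level set $\{\varphi_0=\lambda\}$ the field $b_0$ is nondegenerate, so near $\partial\Omega_t$ the pair $\{X_t, b(t)\}$ (or $\{X_t\}$ together with the transport of $b_0$) spans $\R^2$ uniformly; this nondegeneracy is what lets one recover the full gradient of $v$ and $b$ from conormal derivatives.

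The steps, in order. First, set up the functional framework: for the patch $\omega_0=\chi_\Omega$ with $\partial\Omega\in W^{2,\infty}$, and $j_0\in L^1\cap W^{1,p}$, define the iteration $(v^{(n)},b^{(n)})\mapsto(v^{(n+1)},b^{(n+1)})$ by solving the linear transport-elliptic system. Second, derive the a priori estimates: (a) $L^1\cap L^\infty$ bounds on $\omega$ and $j$ — the $L^1$ bound is immediate from the transport structure, but the $L^\infty$ bound on $j$ is the delicate one because of $\mathcal{H}(v,b)$; here one writes $\mathcal{H}(v,b)$ as a sum of $\mathcal{R}_{ik}\omega\,\mathcal{R}_{lm}j$ terms and controls it via a logarithmic/Lipschitz estimate, closing a Gr\"onwall inequality on a short time interval $[0,T]$; (b) the Lipschitz bound on $v$ and $b$ via the conormal regularity: using \eqref{master} for $\partial_X\omega$ and the analogous equation for $b\cdot\nabla\omega$, together with the elliptic (Calder\'on--Zygmund with striated data, à la Chemin/Bertozzi--Constantin) estimate $\|\nabla v\|_{L^\infty}\lesssim (\text{stuff})\log(e+\text{conormal norms})$, and crucially using hypothesis (2) so that the ellipticity constant $\delta$ does not degenerate. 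Third, propagate the $W^{2,\infty}$ regularity of $\partial\Omega_t$: this follows from the boundedness of $\|X_t\|_{\text{Lip}}$, $\inf|X_t|>0$, which in turn follows from \eqref{tr71} once $v$ is Lipschitz, and from the fact that the level-set/patch boundary inherits one derivative of regularity from $X$. Fourth, pass to the limit and prove uniqueness by estimating the difference of two solutions in a low-regularity norm ($L^2$ for $(v,b)$, or an $L^p$-type norm for $(\omega,j)$), again using the nondegeneracy to keep the singular terms under control.

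I expect the main obstacle to be step 2(a)--2(b) combined: propagating the $L^\infty$ bound on the current $j$ while simultaneously getting the Lipschitz bound on $(v,b)$. The source term $\mathcal{H}(v,b)\sim \mathcal{R}\omega\cdot\mathcal{R}j$ is borderline — Riesz transforms do not preserve $L^\infty$, so one cannot naively bound $\|\mathcal{H}\|_{L^\infty}$; instead one must exploit that \emph{one} factor carries conormal/striated regularity (from the patch structure of $\omega$ and the frozen-in structure of $b$) so that the relevant Riesz transform acting on striated data \emph{is} bounded on $L^\infty$ up to a logarithm. Making this rigorous requires a careful commutator analysis adapted to the MHD coupling, and verifying that the geometric constraint \eqref{Equiv1} is exactly strong enough to close the loop without the constants blowing up — that is where the bulk of the work lies, and it is also why the result is only local in time: the time $T$ is limited by how long the Gr\"onwall argument on the coupled $(\|j\|_{L^\infty}, \|\nabla v\|_{L^\infty}, \|\nabla b\|_{L^\infty}, \text{conormal norms})$ can be sustained, and by how long the nondegeneracy neighborhood in \eqref{Equiv1} survives under the flow.
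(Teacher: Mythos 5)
Your outline captures the general Chemin-type strategy (transported conormal fields, striated a priori estimates, a Gronwall argument on a short interval, $L^2$ energy uniqueness), and you correctly identify the frozen-in structure of $b$ and the fact that the compatibility assumption makes $\partial\Omega$ a level set of $\varphi_0$, hence $b$ stays tangent to the transported boundary. But the proposal misses the two ideas that actually make the scheme close, and one of your geometric claims is wrong. First, when you differentiate the system \eqref{E0} along a transported field $X$, the equations pick up the extra terms $\partial_{\partial_X b-\partial_b X}\,j$ and $\partial_{\partial_X b-\partial_b X}\,\omega$ (see \eqref{E0X}); these involve $\nabla X$ and $\nabla b$ at top order and cannot be absorbed by a Gronwall loop. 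The paper's resolution is to impose $[X_0,b_0]=0$ --- in fact to take $X=b$ itself --- so that the bracket vanishes for all time (Lemma \ref{com27}); this is why the compatibility assumption appears and why the paper insists this commutation is a structural, not technical, restriction. Your proposed pair $\{X_t,b(t)\}$ ``spanning $\R^2$ near $\partial\Omega_t$'' cannot work as stated: two fields both tangent to the same curve are parallel on it, and in any case the nondegeneracy \eqref{Equiv1} is not about spanning but about keeping $|X|=|b|$ bounded below near the boundary so that one may divide by $|X|^2$ in the identities \eqref{yasser}--\eqref{vort}.

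Second, your treatment of $\mathcal{H}(v,b)$ via ``Riesz transforms on striated data bounded on $L^\infty$ up to a logarithm'' is not the mechanism used, and it is unclear it can be made to work, because the obstruction is not only $L^\infty$-boundedness of $\mathcal{R}_{ik}$ but the need to estimate $\partial_X$ of this product, and $C^{\varepsilon-1}\cap L^\infty$ is not an algebra (this is precisely why the theorem requires $W^{2,\infty}$ boundaries and why the paper measures conormal regularity by $\partial_X\omega,\partial_Xj\in L^p$ with $p>2$ rather than in negative H\"older spaces). The paper instead uses the exact pointwise identity \eqref{Iden1}, valid where $X\neq0$, writing $\mathcal{H}(v,b)$ as a quadratic expression in $\partial_Xv,\partial_Xb,\omega,j$ divided by $|X|^2$. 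Since $b$ is Hamiltonian and constant on $\partial\Omega$, it must vanish somewhere inside $\Omega$, so this identity degenerates there; the remaining --- and substantial --- part of the proof is the localization near $\mathcal{Z}_{b_0}$, where the data are smooth but the conormal field vanishes. This is done with a cut-off of the form $\chi=H(\varphi)$, constant along the streamlines of $b$ (so that $b\cdot\nabla\chi=0$ and the truncated system keeps the same coupled structure), which is possible exactly because \eqref{Equiv1} separates $\mathcal{Z}_{b_0}$ from $\partial\Omega$ by level sets of $\varphi_0$, combined with Calder\'on-type commutator estimates for the nonlocal terms. None of this appears in your outline, so as written the a priori estimates in your step 2 would not close.
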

       \vspace{0,2cm}
       
     Before giving some details about the proof, we shall give few  remarks.
      \begin{rema}
      It is worth noting that the compatibility assumption is not only  restrictive to the vortex patch problem but appears in the  {\it current vortex sheets} called also in the literature by  the  {\it{ MHD tangential discontinuity}}. The construction of local in time piecewise smooth solutions  apart from  a smooth  hypersurface $\Gamma_t$ is known     provided that the magnetic field $b_0$ is tangential to $\Gamma_0$ and a stability condition is satisfied at each point of the initial discontinuity. For more details see \cite{Coul, Mor}  and the references therein.
       \end{rema}
     \begin{rema}
     The existence of $\varphi_0$ in the foregoing theorem  follows from the incompressibility of $b_0$ which is a Hamiltonian vector field. Moreover,  since $b_0$ is co-normal  to the  connected curve $\partial\Omega$ according to the assumption $(1)$, then necessarily  this curve must be  a level set of $\varphi_0$ and this justifies the existence of $\lambda$ in the assumption $(2)$. For more details  see Proposition $\ref{prop754}. $
     
     \end{rema}
     \begin{rema}
     The  compatibility assumption $(1)$  imparts to the magnetic field some rigidity: it must be singular for at least one point  inside the domain $\Omega.$ This follows easily from the fact that the \mbox{Hamiltonian $\varphi_0$} is constant on the boundary and thus it has a critical point in $\Omega.$
     \end{rema}
      \begin{rema}
     The condition $\eqref{Equiv1}$ implies in particular that  the extrema of the Hamiltonian function $\varphi_0$  should not be located on the regular level surface energy  containing the curve $\partial\Omega.$  This means somehow that the magnetic field must be regular close to this level set. This assumption is very strong and unfortunately it does not  allow to reach Chemin's result for the Euler case  corresponding to $b_0=0.$ It seems that the restriction described  by the compatibility assumption $(1)$ is relevant and essential in our analysis since  it induces deep algebraic  structure; we need that any co-normal vector field to the initial patch must commute with the initial magnetic field.  However we can hope to dispense with  the non degeneracy assumption  of the magnetic field around the boundary which sounds  to be  a technical artifact.
     \end{rema}
      \begin{rema}
      As we have already seen, Chemin proved in \cite{che1} the global persistence of the $C^{1+\varepsilon}, 0<\varepsilon<1$ boundary  regularity  for the two dimensional Euler equations. But in our main result we require more:   the boundary should be at least  in the class $W^{2,\infty}$. This is due to the following  technical fact: the space $C^{\varepsilon-1}\cap L^\infty$ used naturally to measure  the co-normal regularity  is not an algebra and to overcome this difficulty we should work with positive index spaces.
       \end{rema}
%            \begin{rema}
%     It is interesting to see whether the result of Theorem \ref{thm1} could be true for the special case where the distribution of $\omega_0$ and $j_0$ are given by patches. If $\omega_0=\chi_{\Omega}$ and $j_0=\chi_{D}$ then the local well-posedness can be asserted despite that the magnetic field $b_0$ is tangential to the boundaries $\partial\Omega$ and $\partial D.$  When these curves are concentric circles then the geometric conditions are satisfies and this corresponds to a stationary solution for \eqref{MHD}. We believe that this trivial solution is in fact the only one due to the rigidity condition about the co-normality of the magnetic field to its patch.       \end{rema}
      \begin{rema}
      As we shall see next  in Lemma $\ref{lemex}$, the assumptions $(1)$ and $(2)$ of Theorem $\ref{thm1}$  are not empty.
       \end{rema}

     \vspace{0,2cm}
     
     {\bf Outline of the proof.}
     The proof uses the standard formalism of vortex patches developed by Chemin in \cite{Chez, che1} for incompressible Euler equations. As we have already seen, one of the main feature of Euler equations is the commutation of the push-forward vector fields given by \eqref{tr71} with the transport operator leading to the master equation \eqref{master}. This algebraic property is instantaneously destroyed by the magnetic field which contributes with additional terms as the following equations show
      \begin{equation}
\label{E0X}
 \left\{ 
\begin{array}{ll} 
\partial_t\partial_X \omega+v\cdot\nabla \partial_X\omega=b\cdot\nabla \partial_X j+\partial_{\partial_X b-\partial_b X} j\\
\partial_t \partial_X j+v\cdot\nabla \partial_X j=b\cdot\nabla\partial_X\omega+\partial_{\partial_X b-\partial_b X}\omega +\partial_X\{2\partial_1b\cdot\nabla v^2-2\partial_2b\cdot\nabla v^1\}.
\end{array} \right.    
     \end{equation} 
Thus  and in order to get similar equations to \eqref{E0}  we should at this stage kill the terms involving the vector field $\partial_X b-\partial_b X$. Therefore we shall assume that the vector fields $X$ and $b$  commute initially and this algebraic property is not altered through the time. For the sake of simplicity we can make the choice $X=b$ and   this algebraic constraint will lead in the special case of the vortex patch to the geometric constraint described by the compatibility assumption.  It is worth noting that the main obstacle to reach the regularity $C^{1+\varepsilon}$  for the boundary is the estimate of the last term of the system \eqref{E0X} in the space $C^{\varepsilon-1}$  and  it is not at all clear how to proceed since $C^{\varepsilon-1}\cap L^\infty$ is not an algebra. Besides the geometric condition stated in the compatibility assumption   will force the Hamiltonian magnetic field to be degenerate at least at some points inside the domain $\Omega$ and subsequently  we shall  get from the vortex patch formalism some useful information only  far from this singular set. In this region, it is not clear how to construct a non degenerate vector field which  commutes with the magnetic field. To circumvent this difficulty we use  that the initial data are smooth wherever the magnetic field is degenerating combined with the finite speed of propagation of the transport operator. So  for a short time we expect    the influence of the singular parts to  be localized  close  to the image by the flow of the initial one. This fact is not  quite trivial due to the nonlocal property of Riesz transforms in \eqref{source} and thus  some elaborated analysis are required. Especially, the truncation of  the solutions far from  the singular set should be done in  a special way by cutting along the streamlines of the magnetic field.  We emphasize that in this step we use an  algebraic identity for the last term  of \eqref{E0}, see \eqref{Iden1}, combined with  Calder\'on commutator type estimates.

\quad The paper is structured as follows. In Section $2$ we  recall some classical spaces frequently  used in the vortex patch problem. We end this section with some results on the persistence regularity for various transport models. In Section $3$ we shall review some basic results on the algebra  vector fields . In Section $4$ we  detail some weak estimates for both the vorticity and the current density. In Section $5$ we shall be concerned with the stationary patches and we plan to give the proof of Theorem \ref{The44}. Some general facts on conformal mapping and rectifiable boundaries will be also  discussed. Section $6$ will be devoted to the  proof of Theorem \ref{thm1} and its extension to generalized vortex patches. Finally, we shall close this paper by some commutator estimates. 
\section{Basic tools}\label{Sec2}
In this section we shall introduce some function spaces and investigate  some of their elementary properties. We will also recall few basic results concerning some transport equations. 
First we need to fix a piece of notation  that will be frequently used along this paper.
\begin{itemize}
\item For $p\in[1,\infty]$, the space $L^p$ denotes the usual Lebesgue space.
\item We denote by $C$ any positive constant that may change from line to line and by  $C_{0}$ a real positive constant depending on the size of the initial data.
\item  For any positive real numbers $A$ and $B$, the notation  $A\lesssim B$ means that there exists a positive constant $C$ independent of $A$ and $B$  such that $A\leqslant CB.$
\item For any two sets $E,F\subset \RR^2$ and $x\in\RR^2$, we define
$$
d(x,E)\triangleq\inf\{|x-y|; y\in E\};\quad \textnormal{dist}(E,F)\triangleq\inf\{d(x,F); x\in E\}.
$$
\item For a subset $A\subset\RR^2$, we denote by $\chi_A$ the characteristic function of $A$ which is defined by
 \begin{equation*}
\chi_A(x)=
 \left\{ 
\begin{array}{ll} 
1,\quad \hbox{if}\quad x\in A,\\
0\quad \hbox{if}\quad x\notin A
\end{array} \right.    
     \end{equation*} 
\end{itemize}
\subsection{Function spaces}

In what follows we intend to recall  the definition of H\"{o}lder spaces $C^{\alpha}$ and Sobolev spaces of type $W^{1,p}$. 
%\begin{defi}
Let  $\alpha\in]0,1[$, we denote by $C^\alpha$ the set of continuous functions $u:\RR^d\to\RR$ such that
$$
\|u\|_{C^\alpha}=\|u\|_{L^\infty}+\sup_{x\neq y}\frac{|u(x)-u(y)|}{|x-y|^\alpha}<\infty.
$$
The Lipschitz class denoted by $\textnormal{Lip}$  corresponds to the borderline  case $\alpha=1,$ 
$$
\|u\|_{\textnormal{Lip}}=\|u\|_{L^\infty}+\sup_{x\neq y}\frac{|u(x)-u(y)|}{|x-y|}<\infty.
$$
We shall also make use of the space $C^{1+\alpha}(\RR^d)$ which is the set of continuously differentiable \mbox{functions $u$} such that
$$
\|u\|_{C^{1+\alpha}}=\|u\|_{L^\infty}+\|\nabla u\|_{C^\alpha}<\infty.
$$
By the same way we can define  the spaces $C^{n+\alpha},$ with $n\in\NN$ and $\alpha\in]0,1[.$ 

Now we shall recall Sobolev space $W^{1,p}$ for $p\in [1,\infty]$, which is the set of the tempered distribution $u\in \mathcal{S}^\prime $ equipped with the norm
$$
\|u\|_{W^{1,p}}\triangleq \|u\|_{L^p}+\|\nabla u\|_{L^p}.
$$
%\end{defi}

Our next task is to  introduce the anisotropic Sobolev spaces, which are the analogous of the anisotropic H\"older spaces introduced by Chemin some years ago in \cite{che1}. 
\begin{defi}\label{Defin79}
Let $\varepsilon\in(0,1), p\in [1,\infty]$ and  $X:\RR^2\to\RR^2$ be a  smooth divergence-free vector field. Let $u:\RR^2\to\RR$  be a scalar  function in $L^1\cap L^\infty$.
\begin{enumerate}
\item We say that $u$ belongs to the space $C^\varepsilon_X$ if and only if
$$
\|u\|_{C^\varepsilon_X}\triangleq \|u\|_{L^1\cap L^\infty}+\|\partial_Xu\|_{ C^{\varepsilon-1}}<\infty.
$$
\item The function $u$ belongs to the space ${W}^{p}_X$ if and only if
$$
\|u\|_{{W}^{p}_X}\triangleq \|u\|_{L^1\cap L^\infty}+\|\partial_Xu\|_{ L^{p}}<\infty
$$
where  we denote by 
$$
\partial_X u=\textnormal{div}(X u).
$$
\end{enumerate}
\end{defi}
%We shall prove the following result. Before stating our result we shall introduce some notation. For $\delta\geq0$ we denote by 
%$$\mathcal{Z}_X^\delta\triangleq\big{\{}x\in \RR^2,\quad |X(x)|\le \delta\big{\}}.
%$$
We will see later in Section \ref{vec89} some additional properties about the Lie derivative $\partial_X$. 

Now we shall introduce the notion of H\"{o}lderian singular support. 
\begin{defi}\label{singsuppp}
Let $\varepsilon\in]0,1[$ and $u:\RR^2\to\RR$. We say that $x\notin \Sigma_{\textnormal{sing}}^\EE(u)$ if there exits  a smooth function $\chi$ defined in a  neighborhood  of $x$ with $\chi(x)\neq0$ and $\chi u$ belongs to $C^\EE.$ 

The closed set $\Sigma_{\textnormal{sing}}^\EE(u)$ is called the H\"{o}lderian singular support of $u$ of index $\varepsilon$.
   \end{defi}

{\bf{Example:}} Let $\Omega$ be a Jordan domain and $u=\chi_\Omega$ be the characteristic function of $\Omega$. Then
$$
\Sigma_{\textnormal{sing}}^\EE(u)=\partial\Omega.
$$
Moreover, if the  boundary is $C^1$  then
$$
\partial_X \chi_\Omega=-(X\cdot\vec{n})d\sigma_{\partial\Omega},
$$
with $d\sigma_{\partial\Omega}$ the arc-length measure on $\partial\Omega$ and $\vec{n}$ the outward unit normal.
In the particular case where  $X$ is tangential, said also co-normal, to $\partial\Omega$ we get
$$
\partial_X \chi_\Omega=0.
$$

\vspace{0,2cm}

Now we shall briefly discuss  some elementary results on    the  Littlewood-Paley theory. First we need to recall the  following statement concerning the dyadic partition of the unity.
 
There exist two radial positive functions  $\chi\in \mathcal{D}(\RR^d)$ and  $\varphi\in\mathcal{D}(\RR^d\backslash{\{0\}})$ such that
\begin{itemize}
\item[\textnormal{i)}]
$\displaystyle{\chi(\xi)+\sum_{q\geq0}\varphi(2^{-q}\xi)=1}$;$\quad \displaystyle{\forall\,  \,  q\geq1,  \,   \supp\chi\cap \supp\varphi(2^{-q})=\varnothing}$\item[\textnormal{ii)}]
 $ \supp\varphi(2^{-j}\cdot)\cap
\supp\varphi(2^{-k}\cdot)=\varnothing,  $ if  $|j-k|\geq 2$.
\end{itemize}
For any $v\in{\mathcal S}'(\RR^d)$ we set the cut-off operators,
  $$
\Delta_{-1}v=\chi(\hbox{D})v~;\,   \forall
 q\in\NN,  \;\Delta_qv=\varphi(2^{-q}\hbox{D})v\quad\hbox{ and  }\;
 S_q=\sum_{-1\leq p\leq q-1}\Delta_{p}.$$
% The homogeneous operators are defined by
% $$
% \dot{\Delta}_{q}v=\varphi(2^{-q}\hbox{D})v,  \quad \dot S_{q}v=\sum_{j\leq q-1}\dot\Delta_{j}v,  \quad\forall q\in\ZZ.
% $$
From \cite{b},  we split formally  the product $uv$ of two distributions into three parts,
$$
uv=T_u v+T_v u+R(u,  v),  
$$

\noindent with

$$
T_u v=\sum_{q}S_{q-1}u\Delta_q v,  \quad  R(u,  v)=\sum_{q}\Delta_qu\tilde\Delta_{q}v  \quad\hbox{and}\quad \tilde\Delta_{q}=\sum_{j=-1}^1\Delta_{q+j}.
$$

We will  make continuous use of Bernstein inequalities (see  \cite{che1} for instance).
\begin{lem}\label{lb}\;
 There exists a constant $C$ such that for $q,  k\in\NN,  $ $1\leq a\leq b$ and for  $u\in L^a(\RR^d)$,   
\begin{eqnarray*}
\sup_{|\alpha|=k}\|\partial ^{\alpha}S_{q}u\|_{L^b}&\leq& C^k\,  2^{q(k+d(\frac{1}{a}-\frac{1}{b}))}\|S_{q}u\|_{L^a},  \\
\ C^{-k}2^
{qk}\|{\Delta}_{q}u\|_{L^a}&\leq&\sup_{|\alpha|=k}\|\partial ^{\alpha}{\Delta}_{q}u\|_{L^a}\leq C^k2^{qk}\|{\Delta}_{q}u\|_{L^a}.
\end{eqnarray*}

\end{lem}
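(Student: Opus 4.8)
Everything reduces to the spectral localization of the dyadic blocks together with Young's convolution inequality. Recall from the partition of unity that $\widehat{S_q u}$ is supported in a ball $B(0,c_0 2^q)$ and $\widehat{\Delta_q u}$ in an annulus $\mathcal{C}_q=\{\,c_1 2^q\le|\xi|\le c_2 2^q\,\}$, with $c_0,c_1,c_2>0$ depending only on $\chi$ and $\varphi$. The plan is, for a multi-index $\alpha$ with $|\alpha|=k$, to realize $\partial^\alpha$ acting on a localized block as convolution against a suitably rescaled Schwartz kernel and then to apply Young.

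For the upper bounds, fix $\widetilde\chi\in\mathcal{D}(\RR^d)$ equal to $1$ on $B(0,c_0)$, so that $S_q u=\widetilde\chi(2^{-q}\DD)S_q u$, and let $h_\alpha\in\mathcal{S}(\RR^d)$ be defined by $\widehat{h_\alpha}(\xi)=(i\xi)^\alpha\widetilde\chi(\xi)$. Unwinding the scaling on the Fourier side gives the identity $\partial^\alpha S_q u=2^{q(k+d)}\,h_\alpha(2^q\cdot)\star S_q u$, whence, by Young with exponents $1+\tfrac1b=\tfrac1r+\tfrac1a$,
$$\|\partial^\alpha S_q u\|_{L^b}\le 2^{q\left(k+d\left(\frac1a-\frac1b\right)\right)}\,\|h_\alpha\|_{L^r}\,\|S_q u\|_{L^a}.$$
Here one should note that the hypothesis $a\le b$ is precisely what guarantees $\tfrac1r=1-\tfrac1a+\tfrac1b\in[0,1]$, so that $\|h_\alpha\|_{L^r}$ is meaningful. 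The same computation with $a=b$ (so $r=1$), applied to $\Delta_q$ in place of $S_q$ — whose spectrum also lies in a ball of radius $O(2^q)$ — yields the right-hand inequality of the second display of the statement.

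For the lower bound, fix $\widetilde\varphi\in\mathcal{D}(\RR^d\setminus\{0\})$ equal to $1$ on $\mathcal{C}_1$, so that $\Delta_q u=\widetilde\varphi(2^{-q}\DD)\Delta_q u$. Using the multinomial identity $|\xi|^{2k}=\sum_{|\alpha|=k}\tfrac{k!}{\alpha!}\xi^{2\alpha}$, I would decompose, for $\xi\neq0$,
$$\widetilde\varphi(\xi)=\sum_{|\alpha|=k}\frac{k!}{\alpha!}\,(i\xi)^\alpha\,g_\alpha(\xi),\qquad g_\alpha(\xi)=\frac{(-i\xi)^\alpha}{|\xi|^{2k}}\,\widetilde\varphi(\xi)\in\mathcal{D}(\RR^d\setminus\{0\}),$$
and then, rescaling as before, express $\Delta_q u$ as $2^{-qk}$ times a sum over $|\alpha|=k$ of convolutions of $\partial^\alpha\Delta_q u$ against rescaled kernels $2^{qd}\check g_\alpha(2^q\cdot)$. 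Since $\sum_{|\alpha|=k}\tfrac{k!}{\alpha!}=d^k$, Young's inequality gives $\|\Delta_q u\|_{L^a}\le C^k 2^{-qk}\sup_{|\alpha|=k}\|\partial^\alpha\Delta_q u\|_{L^a}$, i.e.\ the remaining inequality.

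The one point that is not pure bookkeeping, and which I expect to be the main technical obstacle, is the \emph{uniformity in $k$} of the constants: one must check that $\|h_\alpha\|_{L^1}+\|h_\alpha\|_{L^\infty}\le C^{|\alpha|}$ and likewise $\|\check g_\alpha\|_{L^1}\le C^{|\alpha|}$ with $C$ absolute, rather than a bound deteriorating like $k!$ or $C_k$. This is obtained by writing $h_\alpha(x)=(2\pi)^{-d}\int e^{ix\cdot\xi}(i\xi)^\alpha\widetilde\chi(\xi)\,d\xi$ and integrating by parts $N$ times: a derivative falling on $(i\xi)^\alpha$ lowers the degree, one falling on $\widetilde\chi$ costs an $O(1)$ factor supported in a fixed compact set, and Leibniz bookkeeping yields $(1+|x|)^N|h_\alpha(x)|\le C_N\,C^{|\alpha|}$; taking $N=0$ and $N=d+1$ produces the $L^\infty$ and $L^1$ bounds, and log-convexity of $L^r$ norms then gives $\|h_\alpha\|_{L^r}\le C^{|\alpha|}$ uniformly in $r\in[1,\infty]$. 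The kernels $g_\alpha$ are treated identically, their Fourier transforms being smooth and supported in a fixed compact set away from the origin.
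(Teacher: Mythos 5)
Your proof is correct and is the standard argument for Bernstein's inequalities — the paper does not prove this lemma but simply cites Chemin's book \cite{che1}, where essentially the same proof (spectral localization, rescaled kernels, Young's inequality, and the $|\xi|^{2k}=\sum_{|\alpha|=k}\frac{k!}{\alpha!}\xi^{2\alpha}$ decomposition for the reverse bound) is given. You also correctly identify and handle the only delicate point, namely the uniformity in $k$ of the kernel norms.
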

Now we shall recall the characterization of H\"{o}lder spaces in terms of the frequency cut-offs. 

For $s\in [0,\infty[\backslash \NN$, the usual norm of $C^s$ is equivalent to 
$$
\|u\|_{C^s}\approx \sup_{q\geq-1}2^{qs}\|\Delta_q u\|_{L^\infty}.
$$
%\end{defi}
Now we shall prove that the assumptions of Theorem \ref{thm1} can be satisfied by choosing suitably the magnetic vector field.
\begin{lem}\label{lemex}
Let $\Omega$ be a simply connected domain with boundary in $C^{1+\varepsilon}$ and $\EE\in]0,1[.$ Then we can find a Hamiltonian vector field $b_0$ satisfying the assumptions $(1)$ and $(2)$ of Theorem $\ref{thm1}$.
\end{lem}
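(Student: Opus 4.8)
The plan is to build $\varphi_0$ as a smooth function of a scalar cutting out $\partial\Omega$, so that the curve $\partial\Omega$ lies inside a \emph{regular} level set of $\varphi_0$. Then the compatibility assumption $(1)$ is automatic, because $b_0=\nabla^\perp\varphi_0$ is everywhere tangent to the level sets of $\varphi_0$; and assumption $(2)$ reduces to the non-vanishing of $\nabla\varphi_0$ on a neighbourhood of that level set.

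First I would fix a boundary defining function. Since $\partial\Omega$ is a compact Jordan curve of class $C^{1+\varepsilon}$ it is locally the graph of a $C^{1+\varepsilon}$ function; covering it by finitely many such graph charts and gluing the associated local defining functions (each taken negative inside $\Omega$) by a $C^\infty$ partition of unity $(\chi_i)$ produces $\rho\in C^{1+\varepsilon}(V)$ on a neighbourhood $V$ of $\partial\Omega$, with $\{\rho=0\}=\partial\Omega$, $\{\rho<0\}=\Omega\cap V$, and $|\nabla\rho|\ge m>0$ on $V$ after shrinking $V$ — the last two facts holding because on $\partial\Omega$ one has $\nabla\rho=\sum_i\chi_i\nabla\rho_i$, a convex combination of outward-pointing vectors. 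Fix $r_0>0$ so small that $\mathcal T:=\{|\rho|<r_0\}$ is a genuine collar of $\partial\Omega$ with $\overline{\mathcal T}\subset V$, and pick $h\in C^\infty(\RR)$ with $h(0)=0$, $h'\ge 0$, $h'\equiv 1$ on $[-a,a]$ for some $0<a<r_0/2$, and $\supp h'\subset(-r_0/2,r_0/2)$, so that $h\equiv c_-<0$ on $(-\infty,-r_0/2]$, $h\equiv c_+>0$ on $[r_0/2,\infty)$, and $h(\pm a)=\pm a$. Then set $\varphi_0:=h(\rho)$ on $\mathcal T$, extended by $c_-$ on the bounded component, and by $c_+$ on the unbounded component, of $\RR^2\setminus\mathcal T$. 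On $\mathcal T$ this is a $C^\infty$ map composed with a $C^{1+\varepsilon}$ one; near $\partial\mathcal T$ and off $\overline{\mathcal T}$ it is locally constant, with the two prescriptions agreeing; hence $\varphi_0\in C^{1+\varepsilon}(\RR^2)$, and the Hamiltonian field $b_0:=\nabla^\perp\varphi_0=h'(\rho)\,\nabla^\perp\rho$ (zero off $\mathcal T$) is divergence free and compactly supported.

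Verifying $(1)$ and $(2)$ is then routine. As $\varphi_0\equiv 0$ on the connected curve $\partial\Omega$, the value $\lambda$ of $\varphi_0$ on $\partial\Omega$ equals $0$, and $\nabla^\perp\varphi_0\perp\nabla\varphi_0$ forces $b_0$ to be tangent to $\partial\Omega$, i.e. $b_0\cdot n=0$ there: this is $(1)$. For $(2)$, monotonicity of $h$ gives $|\varphi_0|\ge a$ both on $\{a\le|\rho|<r_0\}$ and on $\RR^2\setminus\mathcal T$ (where $|\varphi_0|=|c_\pm|\ge a$); hence $\{|\varphi_0|<a\}\subset\{|\rho|<a\}$, and on that set $h'(\rho)=1$, so $|b_0|=|\nabla\rho|\ge m$. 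Thus $(2)$ holds with $\eta=a$ and $\delta=m/2$.

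I expect the only step calling for genuine care to be producing a \emph{globally defined} Hamiltonian of class $C^{1+\varepsilon}$, a defining function being merely local; I handle this by cutting $\varphi_0$ along the level sets of $\rho$ and pasting constants outside a thin collar, choosing $h$ to have already become constant before the collar ends. (If one wished in addition the current density $j_0$ of Theorem \ref{thm1} to lie in $L^1\cap W^{1,p}$, one would need $\partial\Omega$ smoother than $C^{1+\varepsilon}$, since here $\varphi_0$ is exactly as smooth as the boundary and $j_0=\Delta\varphi_0$ one derivative less; the present lemma asserts only $(1)$ and $(2)$.)
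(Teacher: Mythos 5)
Your proof is correct and follows essentially the same route as the paper: realize $\partial\Omega$ as a regular level set of a Hamiltonian, truncate so that the resulting field is supported in a collar of the boundary, then read off assumption $(1)$ from tangency of $\nabla^\perp\varphi_0$ to the level sets and assumption $(2)$ from the non-vanishing of the gradient near that level set. The only difference is cosmetic: the paper multiplies a globally defined level-set function by a spatial cutoff, whereas you compose a locally built defining function with a cutoff in the range, which if anything makes the verification of $(2)$ slightly cleaner and keeps the regularity honestly at $C^{1+\varepsilon}$.
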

\begin{proof}
       We will briefly outline the proof of this lemma. First, it is a well-known fact that when the boundary $\partial\Omega$ is at least $C^1$ then it  can be seen as a level set of a smooth function. More precisely, there exists a smooth function $f:\RR^2\to\RR_+$ with the following properties: 
     $$
     \partial\Omega=\{x\in\RR^2; f(x)=1\}; \,\Omega=f^{-1}(]1,+\infty[);
     $$
     $$
   \lim_{\|x\|\to\infty}f(x)=0\quad\hbox{and}\quad \nabla f(x)\neq 0, \forall x\in \partial\Omega.
     $$
   For $h>0$ introduce the sets
    $$\Omega_h\triangleq\{x;\; \textnormal{dist} (x,\Omega)\le h\};\quad \partial\Omega_h\triangleq\{x;\; \textnormal{dist} (x,\partial\Omega)\le h\}
    .$$ Then for $\eta>0$ sufficiently small, there exists $h>0$ such that 
    \begin{equation}\label{as1}
   \forall x\in \RR^2,\quad |f(x)-1|\le\eta  \Longrightarrow x\in \partial\Omega_h . 
    \end{equation}
     Now let $\chi:\RR^2\to [0,1]$ be a smooth  compactly supported function  such that $\chi(x)=1,\forall x\in \Omega_h.$ Set
    $$\varphi_0(x)=\chi(x) f(x)\quad\hbox{and}\quad b_0=\nabla^\perp \varphi_0.
    $$ Then $b_0$ satisfies the assumptions $(1)$ and $(2)$ of Theorem \ref{thm1}. Indeed, the first assumption is easy to check. As to the second one, using \eqref{as1} we easily  obtain
    $$
    \{|\chi f-1|\le\eta\} \subset \partial\Omega_h.
    $$
     Moreover, it is clear that for $x\in \Omega_h,$ 
   $$
   b_0(x)=\chi(x)\nabla^\perp f(x)=\nabla^\perp f(x).
   $$
   Since $\partial\Omega$ is a regular energy curve, then we can choose $h>0$ small enough such that, for \mbox{some $\delta>0,$}
   $$
  \forall x\in \partial\Omega_h;\quad\, |b_0(x)|\geq \delta.
   $$
   This concludes the proof of the lemma.
     \end{proof}

\subsection{Transport equations}
We intend to discuss some basic results about the persistence regularity for some  transport equations. The first one  is very classical  and whose proof can 
 found in \cite{che1} for instance. 
\begin{prop}\label{l555}
Let $v$ be a divergence-free vector field and  $F$ be a smooth function. Let $f$ be a solution of the transport equation
$$
\partial_{t} f+v\cdot\nabla f=F.
$$
Then the following estimates hold true.
\begin{enumerate}
\item $L^p-$estimates: Let $p\in [1,\infty]$ then for any $t\geq 0$
$$
\|f(t)\|_{L^p}\le\|f(0)\|_{L^p}+\int_{0}^t\|F(\tau)\|_{L^p} d\tau.
$$
\item H\"{o}lder estimates: For $\EE\in ]-1,1[$ we get
$$
\|f(t)\|_{{C^{\EE}}}\leq C e^{CV(t)}\Big(\|f(0)\|_{{C^{\EE}}}+\int_{0}^te^{-CV(\tau)}
\|F(\tau)\|_{{C^{\EE}}}d\tau\Big),
$$
with $C$ a constant depending only on the index regularity $\EE$ and 
$$
V(t)\triangleq \int_{0}^t\|\nabla v(\tau)\|_{L^\infty}d\tau.
$$
\end{enumerate}

\end{prop}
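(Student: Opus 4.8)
The plan is to establish the two estimates separately. The $L^p$ bound follows from a direct computation along the characteristics of $v$, while the H\"older bound follows from a Littlewood--Paley decomposition combined with a classical commutator lemma and Gronwall's inequality. In both cases the starting point is the flow $\psi$ of $v$, defined by $\partial_t\psi(t,x)=v(t,\psi(t,x))$ and $\psi(0,x)=x$; since $\textnormal{div}\,v=0$, each map $\psi(t,\cdot)$ is a measure-preserving homeomorphism of $\RR^d$.

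\emph{$L^p$ estimate.} Evaluating the equation along a characteristic gives $\frac{d}{dt}\big(f(t,\psi(t,x))\big)=F(t,\psi(t,x))$, hence
$$
f(t,\psi(t,x))=f(0,x)+\int_0^t F(\tau,\psi(\tau,x))\,d\tau.
$$
Taking the $L^p_x$ norm, using that $\psi(\tau,\cdot)$ preserves the Lebesgue measure together with Minkowski's integral inequality, one obtains $\|f(t)\|_{L^p}\le\|f(0)\|_{L^p}+\int_0^t\|F(\tau)\|_{L^p}\,d\tau$; the case $p=\infty$ is identical because $\psi(t,\cdot)$ is a homeomorphism.

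\emph{H\"older estimate.} I would localize in frequency. Applying $\Delta_q$ to the equation, the block $\Delta_q f$ solves $\partial_t\Delta_q f+v\cdot\nabla\Delta_q f=\Delta_q F+R_q$ with $R_q\triangleq[v\cdot\nabla,\Delta_q]f$, so the $L^\infty$ case of the previous step gives
$$
\|\Delta_q f(t)\|_{L^\infty}\le\|\Delta_q f(0)\|_{L^\infty}+\int_0^t\big(\|\Delta_q F(\tau)\|_{L^\infty}+\|R_q(\tau)\|_{L^\infty}\big)\,d\tau.
$$
Next I would invoke the classical commutator estimate (see \cite{che1}): for $\EE\in(-1,1)$ there is a constant $C=C(\EE)$, bounded on compact subsets of $(-1,1)$, such that
$$
\sup_{q\geq-1}2^{q\EE}\,\|[v\cdot\nabla,\Delta_q]f\|_{L^\infty}\le C\,\|\nabla v\|_{L^\infty}\,\|f\|_{C^\EE},
$$
whose proof uses Bony's decomposition of $v\cdot\nabla f$, the Bernstein inequalities of Lemma \ref{lb}, and the identity $v\cdot\nabla f=\textnormal{div}(vf)$ coming from $\textnormal{div}\,v=0$ (it is this rewriting that makes the range of $\EE$ close to $-1$ admissible). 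Multiplying the previous inequality by $2^{q\EE}$, taking the supremum over $q\geq-1$, and using the characterization $\|f\|_{C^\EE}\approx\sup_q2^{q\EE}\|\Delta_q f\|_{L^\infty}$, one gets
$$
\|f(t)\|_{C^\EE}\le\|f(0)\|_{C^\EE}+\int_0^t\|F(\tau)\|_{C^\EE}\,d\tau+C\int_0^t\|\nabla v(\tau)\|_{L^\infty}\,\|f(\tau)\|_{C^\EE}\,d\tau,
$$
and Gronwall's lemma then yields the stated bound with the factor $e^{CV(t)}$.

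The only delicate point is the commutator estimate: one must control $[v\cdot\nabla,\Delta_q]f$ in $L^\infty$ with the gain $2^{-q\EE}$ while paying only $\|\nabla v\|_{L^\infty}$ (rather than a stronger norm of $v$), and one must keep the constant uniform as $\EE$ ranges over a compact subset of $(-1,1)$. Everything else is routine bookkeeping with the measure-preserving flow.
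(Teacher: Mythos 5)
The paper does not prove this proposition but refers to Chemin's book \cite{che1}, and your argument is precisely the standard proof found there: characteristics with the measure-preserving flow for the $L^p$ bound, and Littlewood--Paley localization plus the commutator estimate $\sup_{q}2^{q\EE}\|[v\cdot\nabla,\Delta_q]f\|_{L^\infty}\lesssim\|\nabla v\|_{L^\infty}\|f\|_{C^\EE}$ (valid for $\EE\in(-1,1)$ thanks to $\D v=0$) followed by Gronwall for the H\"older bound. This is correct and essentially the same approach as the paper's cited proof.
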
 

Next we shall   deal with the same problem  for a coupled transport model generalizing the previous one and which appears naturally in the structure of the MHD system \eqref{MHD}.
\begin{equation}\label{coupled}
 \left\{ 
\begin{array}{ll} 
\partial_tf+v\cdot\nabla f =b\cdot\nabla g+F\\
\partial_tg+v\cdot\nabla g=b\cdot\nabla f+G
\end{array} \right.    
     \end{equation}
     where $F$ and $G$ are given and the unknowns are $f$ and $g$.
\begin{prop}\label{transport}
Let $v$ and $b$ be two divergence-free smooth vector fields and $f,g$ be two smooth solutions for \eqref{coupled}. Then the following estimates hold true.
\begin{enumerate}
\item $L^p-$estimates: For $p\in [1,\infty]$ we get
$$
\|f(t)\|_{L^p}+\|g(t)\|_{L^p}\lesssim\|f(0)\|_{L^p}+\|g(0)\|_{L^p}+\int_{0}^t\big(\|F(\tau)\|_{L^p}+\|G(\tau)\|_{L^p}\big) d\tau
$$
\item H\"{o}lder estimates: For $\EE\in ]-1,1[$ we get
$$
\|f(t)\|_{{C^{\EE}}}+\|g(t)\|_{{C^{\EE}}}\leq C e^{CV(t)}\Big(\|f(0)\|_{{C^{\EE}}}+\|g(0)\|_{{C^{\EE}}}+\int_{0}^te^{-CV(\tau)}
\big(\|F(\tau)\|_{{C^{\EE}}}+\|G(\tau)\|_{{C^{\EE}}}\big)d\tau\Big),
$$
with
$$
V(t)\triangleq \int_{0}^t\big(\|\nabla v(\tau)\|_{L^\infty}+\|\nabla b(\tau)\|_{L^\infty}\big)d\tau.
$$
\end{enumerate}
\end{prop}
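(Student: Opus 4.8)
The plan is to exploit the symmetry of the coupled system \eqref{coupled} by diagonalizing it. Introducing the characteristic combinations $h_\pm \triangleq f \pm g$, we add and subtract the two equations to obtain
$$
\partial_t h_\pm + v\cdot\nabla h_\pm \mp b\cdot\nabla h_\pm = F \pm G,
$$
that is, $\partial_t h_\pm + (v \mp b)\cdot\nabla h_\pm = F\pm G$. Since $v$ and $b$ are both divergence-free, so are the transport fields $v\mp b$, and each equation for $h_\pm$ is now a \emph{single} transport equation of the type treated in Proposition \ref{l555}. This reduction is the conceptual heart of the argument; once it is in place the rest is bookkeeping.

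For the $L^p$ estimates, I would apply part (1) of Proposition \ref{l555} to each of $h_+$ and $h_-$ with the respective drift $v\mp b$, obtaining
$$
\|h_\pm(t)\|_{L^p} \le \|h_\pm(0)\|_{L^p} + \int_0^t \|F(\tau)\pm G(\tau)\|_{L^p}\,d\tau.
$$
Adding these and using $f = \tfrac12(h_+ + h_-)$, $g = \tfrac12(h_+ - h_-)$ together with the triangle inequality $\|F\pm G\|_{L^p}\le \|F\|_{L^p}+\|G\|_{L^p}$ yields the claimed bound (with an absolute constant absorbed into $\lesssim$). Note that the $L^p$ bound is uniform in $p$ and requires no control on $\nabla v$ or $\nabla b$, exactly as in the scalar case, because the flows of $v\mp b$ are measure-preserving.

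For the Hölder estimates I would similarly apply part (2) of Proposition \ref{l555} to $h_\pm$, which gives
$$
\|h_\pm(t)\|_{C^\EE} \le C e^{C V_\pm(t)}\Big(\|h_\pm(0)\|_{C^\EE} + \int_0^t e^{-C V_\pm(\tau)}\|F(\tau)\pm G(\tau)\|_{C^\EE}\,d\tau\Big),
$$
where $V_\pm(t) = \int_0^t \|\nabla(v\mp b)(\tau)\|_{L^\infty}\,d\tau \le V(t)$ with $V$ as in the statement. The only mild technical point is combining the two exponential weights: since $V_\pm(t)\le V(t)$ and $e^{-CV_\pm(\tau)}\le 1$, one bounds $e^{CV_\pm(t)}\le e^{CV(t)}$ and, for the integral term, $e^{CV_\pm(t)-CV_\pm(\tau)} \le e^{CV(t)}$ for $\tau\le t$ (as $V_\pm$ is nondecreasing), so both contributions are dominated by the same weight $e^{CV(t)}$. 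Summing over $\pm$, expressing $f,g$ in terms of $h_\pm$, and using subadditivity of the $C^\EE$ norm on $F\pm G$ gives the result. I do not anticipate a genuine obstacle here; the main thing to get right is the uniform handling of the two weights $V_+$ and $V_-$ by the single dominating quantity $V$, and the observation that the constant $C$ from Proposition \ref{l555} depends only on $\EE$ and hence is the same for both characteristic equations.
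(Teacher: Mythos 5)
Your proposal is correct and coincides with the paper's own argument: the characteristic combinations $h_\pm=f\pm g$ are exactly the Els\"asser variables $\Phi=f+g$, $\Psi=f-g$ used in the paper, which likewise reduces the coupled system to two scalar transport equations with the divergence-free drifts $v\mp b$ and then invokes Proposition \ref{l555}. The additional bookkeeping you supply (triangle inequality for $F\pm G$ and domination of $V_\pm$ by $V$) is the routine step the paper leaves implicit.
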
 

\begin{proof}
We shall 
introduce Elasser variables, see \cite{Ela}, 
$$
\Phi\triangleq f+g\quad\hbox{and}\quad \Psi\triangleq f-g.
$$
Then we can easily check that
\begin{equation*}
 \left\{ 
\begin{array}{ll} 
\partial_t\Phi+(v-b)\cdot\nabla \Phi =F+G\\
\partial_t\Psi+(v+b)\cdot\nabla \Psi=F-G
\end{array} \right.    
     \end{equation*}
     These are transport equations with  divergence-free vector fields and thus we can apply  \mbox{Proposition \ref{l555}} leading to the desired estimates.
\end{proof}

\section{Basic results on  vector fields}\label{vec89}
In this section we shall review some general results on vector fields and focus on some canonical commutation relations. Special attention will be paid to the Hamiltonian vector fields for which some nice properties are established. Most of the results that will be discussed soon are very known and for the commodity of the reader we prefer giving the proofs of some of them.
\subsection{Push-forward}
Let $X:\RR^n\to\RR^n$ be a smooth vector field and $f:\RR^n\to\RR$ be a smooth function. We denote by $\partial_X f$ the derivative of $f$ in the direction $X$, that is, 
$$
X(f)=\partial_X f=\sum_{i=1}^n X^i\partial_i f=X\cdot\nabla f.
$$  
This is the Lie derivative of the function $f$ with respect to the vector field $X,$ denoted usually by $\mathcal{L}_X f$ and in the preceding formula we adopt different notations for this object. 

For two vector fields $X,Y:\RR^n\to\RR^n$, their commutator is given by the Lie bracket $[X,Y]$ defined  in the coordinates system by
\begin{eqnarray*}
[X,Y]^i&\triangleq&\sum_{i=1}^n\big(X^j\partial_j Y^i-Y^j\partial_j X^i\big)\\
&=&\partial_{X}Y^i-\partial_Y X^i .
\end{eqnarray*} 
This can also be written in the form
\begin{equation}\label{comm1}
\partial_X\partial_Y-\partial_Y\partial_X=\partial_{\partial_X Y-\partial_y X}.
\end{equation}
We mention that when $f$ is not sufficiently smooth, for example $f\in L^\infty$ and this will be  mostly  the case in our context, and the vector field $X$ is  divergence-free  we define $\partial_X f$   in  a weak sense as follows,
$$
\partial_X f=\textnormal{div}(X f).
$$     
                                                                   
Now we intend to study some geometric and analytic properties of the {\it push-forward} of a vector field $X_0$ by the flow map associated to another time-dependent vector field $v(t)$.  First recall that the push-forward $\phi_\star X$  of a vector field $X$ by a diffeomorphism $\phi$ of $\RR^d$ is  given by 
$$
(\phi_\star X)(\phi(x))\triangleq X(x)\cdot\nabla\phi(x).
$$
 {Let $v(t)$} be a smooth vector field acting on $\RR^n$ and define its flow map by the differential equation
$$
\partial_t\psi(t,x)=v(t,\psi(t,x)),\quad \psi(0,x)=x.
$$
 It is a classical fact that for $v$ belonging to the Lipschitz class  the flow map is a diffeomorphism from $\RR^d$ to itself and thus the { push-forward} of the a vector field $X_0$ by $\psi_t$ is the vector field $(X_t)$ that can be written in the local coordinates in the form
\begin{equation}\label{fiel}
X_t(x)=\big(X_0\cdot\nabla\psi(t)\big)(\psi^{-1}(t,x)).
\end{equation}
We can easily check by using this formula that the  evolution equation governing $X_t$  is  given by the transport equation 
\begin{equation}\label{tran12}
\partial_t X+v\cdot\nabla X=X\cdot\nabla v.
\end{equation}
Besides, it is a known fact that for two smooth vector fields over $\RR^n,$ $X$ and $Y$   and for a diffeomorphism $\phi:\RR^n\to\RR^n$ we have
$$
\phi_\star[X,Y]=[\phi_\star X,\phi_\star Y].
$$
In the case where $\phi$ is given by the flow map $\psi_t$, the above identity can be easily checked using the dynamical equations. As an immediate consequence we see   that if two vector fields commute then their push-forward vector fields will commute as well. For a future use of this property it should be better to state it in the next lemma.
\begin{lem}\label{com27}
Let $(X_t)_{t\geq0}$ and $(Y_t)_{t\geq 0}$ be  two smooth vector fields solving the equation \eqref{tran12} with  the same velocity $v$. If $[X_0, Y_0]=0$, then we get
$$
[X_t, Y_t]=0,\forall t\geq 0.
$$
\end{lem}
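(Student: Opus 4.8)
The plan is to exploit the fact, recalled just above the statement, that the solution $X_t$ of \eqref{tran12} is precisely the push-forward $\psi_{t\star}X_0$ of $X_0$ by the flow map $\psi_t$ of $v$, given explicitly by \eqref{fiel}, together with the naturality of the Lie bracket under push-forward. Since $v$ is smooth (in particular Lipschitz), $\psi_t$ is a diffeomorphism of $\RR^n$ for every $t$, so $X_t=\psi_{t\star}X_0$ and $Y_t=\psi_{t\star}Y_0$ are well defined and smooth. Then
$$
[X_t,Y_t]=[\psi_{t\star}X_0,\psi_{t\star}Y_0]=\psi_{t\star}[X_0,Y_0]=\psi_{t\star}0=0,
$$
which is exactly the claim. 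The only point needing justification is the identity $\psi_{t\star}[X_0,Y_0]=[\psi_{t\star}X_0,\psi_{t\star}Y_0]$, i.e. diffeomorphism-invariance of the bracket; in the present setting this can be checked directly by differentiating \eqref{fiel} in the space variable, or by the change of variables $x\mapsto\psi_t(x)$ in the coordinate formula for $[X,Y]$.

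Alternatively, and more in the spirit of the transport-equation arguments used elsewhere in the paper, one can argue purely at the level of the evolution equations. Set $W_t\triangleq[X_t,Y_t]$. Rewriting \eqref{tran12} as $\partial_tX=[X,v]$ and $\partial_tY=[Y,v]$ and combining the commutation rule \eqref{comm1} with the Jacobi identity for the Lie bracket, a short computation gives
$$
\partial_tW=[\partial_tX,Y]+[X,\partial_tY]=[[X,v],Y]+[X,[Y,v]]=[[X,Y],v]=[W,v],
$$
so that $W$ solves the \emph{same} linear transport equation \eqref{tran12}, namely $\partial_tW+v\cdot\nabla W=W\cdot\nabla v$. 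Since $W_0=[X_0,Y_0]=0$ by hypothesis, uniqueness for this linear system (again guaranteed by $v\in\textnormal{Lip}$) forces $W_t\equiv0$ for all $t\geq 0$.

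There is no genuine obstacle here: the lemma is essentially diffeomorphism-invariance of the Lie bracket applied to the flow of $v$. The only things to watch are the regularity hypotheses ensuring $\psi_t$ is a diffeomorphism (so that the push-forward makes sense) and that the uniqueness statement for \eqref{tran12} applies, together with the sign bookkeeping in the Jacobi-identity computation of $\partial_tW$.
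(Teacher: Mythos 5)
Your first argument is exactly the paper's: the lemma is stated there as an immediate consequence of the naturality identity $\phi_\star[X,Y]=[\phi_\star X,\phi_\star Y]$ applied with $\phi=\psi_t$, which is precisely what you invoke. Your alternative transport-equation argument (showing $W=[X,Y]$ itself solves \eqref{tran12} via the Jacobi identity and concluding by uniqueness) is also correct and self-contained, but it is not needed beyond the push-forward observation.
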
 
Next we discuss the commutation between the vector fields given by the equation \eqref{tran12} and the material derivative $D_t\triangleq \partial_t+v\cdot\nabla$ and the proof is straightforward.
\begin{prop}\label{com34}
Let $X$ be the push-forward of a smooth vector field $X_0$ defined by \eqref{tran12}. Then $X$ commutes with the transport operator $D_t\triangleq\partial_t+v\cdot\nabla$,
$$
\partial_XD_t-D_t\partial_X=0.
$$ 

\end{prop}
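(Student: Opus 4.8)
The plan is to verify the operator identity by applying it to an arbitrary smooth function $f$ and reducing everything to the defining equation \eqref{tran12} of the push‑forward. First I would expand both compositions explicitly. Since $\partial_Xf=X\cdot\nabla f$ and $D_tf=\partial_tf+v\cdot\nabla f$, one gets
$$\partial_XD_tf=X\cdot\nabla\partial_tf+X\cdot\nabla(v\cdot\nabla f),$$
whereas, keeping in mind that $X=X(t,x)$ also depends on time,
$$D_t\partial_Xf=(\partial_tX)\cdot\nabla f+X\cdot\nabla\partial_tf+v\cdot\nabla(X\cdot\nabla f).$$
Subtracting, the terms $X\cdot\nabla\partial_tf$ cancel and we are left with
$$\partial_XD_tf-D_t\partial_Xf=X\cdot\nabla(v\cdot\nabla f)-v\cdot\nabla(X\cdot\nabla f)-(\partial_tX)\cdot\nabla f.$$

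The next step is to recognize the first two terms as the spatial commutator of the directional derivatives $\partial_X$ and $\partial_v$ applied to $f$. For each fixed $t$ a direct expansion in coordinates, which is precisely the bracket identity \eqref{comm1}, gives
$$X\cdot\nabla(v\cdot\nabla f)-v\cdot\nabla(X\cdot\nabla f)=(\partial_Xv-\partial_vX)\cdot\nabla f=(X\cdot\nabla v-v\cdot\nabla X)\cdot\nabla f.$$
Plugging this back in yields
$$\partial_XD_tf-D_t\partial_Xf=\big(X\cdot\nabla v-v\cdot\nabla X-\partial_tX\big)\cdot\nabla f.$$
Finally, the vector field inside the parentheses is identically zero: this is exactly the transport equation \eqref{tran12} satisfied by the push‑forward $X$, namely $\partial_tX+v\cdot\nabla X=X\cdot\nabla v$. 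Hence $\partial_XD_tf-D_t\partial_Xf=0$ for every smooth $f$, which is the asserted operator identity $\partial_XD_t-D_t\partial_X=0$.

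I do not expect a genuine obstacle here; the statement is essentially an algebraic bookkeeping exercise. The only two points that need a little attention are retaining the term $\partial_tX$ when differentiating the product $X\cdot\nabla f$ with $D_t$, and invoking the coordinate form of the Lie bracket \eqref{comm1} to collapse the mixed second‑order derivatives into a first‑order term; both are routine. (Conceptually, one may also note that by \eqref{fiel} the field $X_t$ is the push‑forward of $X_0$ along the flow $\psi_t$ of $v$, and $D_t$ is the generator of transport along that flow, so the identity is just the infinitesimal version of the naturality of push‑forward under composition of diffeomorphisms; but the computation above is the most direct route and is the one I would write out.)
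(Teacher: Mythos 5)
Your computation is correct and is precisely the "straightforward" argument the paper alludes to (the paper states Proposition \ref{com34} without proof): expand both compositions on a test function, cancel the mixed second-order derivatives via the Lie bracket identity \eqref{comm1}, and absorb the remaining first-order term using the transport equation \eqref{tran12}. Nothing is missing.
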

\subsection{Hamiltonian vector fields}
We shall discuss now some special structures of  Hamiltonian vector fields in two dimensions. To precise the terminology, we say that a smooth vector fields is Hamiltonian if it is divergence-free and in this case there exists a potential scalar function, called {\it stream function} or {\it Hamiltonian function}, $\varphi:\RR^2\to\RR$ such that
$$
X(x)=\nabla^\perp\varphi(x)\triangleq\begin{pmatrix}
-\partial_2\varphi&\\
\partial_1\varphi&
\end{pmatrix}. $$

%\begin{defi}
{\bf{Notation:}} Let $X:\RR^n\to\RR^n$ be  a continuous vector field. We denote by $\mathcal{Z}_X$ the set of the zeros of $X$, that is its {\it singular} set defined by
$$
\mathcal{Z}_X=\big\{x\in \RR^n, X(x)=0\big\}.
$$
A  point $x$ is said to be {\it regular} for $X$ when $X(x)\neq 0.$
Obviously the singular set is closed and the regular one is open.
%\end{defi}

\begin{defi}
Let $\gamma:[0,1]\to \RR^2$ be a  $C^1$ Jordan curve  and $X:\RR^2\to\RR^2$ be a $C^1$  vector field. We say that $X$ is {\it co-normal}  or {\it tangential} to the curve  $\gamma$ if  $X$ is  regular on $\gamma$  and 
$$
X(x)\cdot n(x)=0,\quad\forall x\in \gamma,
$$
where $n(x)$ denotes a normal vector to the curve at the point $x$. 

\end{defi}
Sometimes we use the vocabulary  {\it streamline} or a {\it field line}  for  $X$  to denote a curve obeying to the previous definition. This terminology is justified by the next classical result.
\begin{prop}\label{prop754}
Let $\gamma:[0,1]\to \RR^2$ be a  $C^1$ Jordan curve  and $X:\RR^2\to\RR^2$ be  a $C^1$ Hamiltonian vector field. Then  $X$ is co-normal to the curve  $\gamma$ if and only if   the curve is a {\it streamline} or a level set for the potential function $\varphi$, that is there exists a constant $\lambda$ such that
$$
\varphi(x)=\lambda,\quad\forall x\in \gamma.
$$ 
In this case the vector field $X$ has at least a singular point inside the domain delimited by the \mbox{curve $\gamma.$}
\end{prop}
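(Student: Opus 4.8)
The plan is to prove the equivalence in two directions, using the fact that for a Hamiltonian vector field $X=\nabla^\perp\varphi$ the level sets of $\varphi$ and the integral curves of $X$ coincide (as long as we stay away from $\mathcal{Z}_X$). First I would observe that for any $C^1$ parametrization $\gamma:[0,1]\to\RR^2$ of the Jordan curve, the composition $t\mapsto \varphi(\gamma(t))$ is $C^1$ with derivative $\nabla\varphi(\gamma(t))\cdot\gamma'(t)$. Now $\gamma'(t)$ is tangent to the curve at $\gamma(t)$, hence proportional to a tangent vector there, and $\nabla\varphi=-\,(\nabla^\perp\varphi)^\perp=-X^\perp$, so $\nabla\varphi(\gamma(t))\cdot\gamma'(t) = -X^\perp(\gamma(t))\cdot\gamma'(t)$. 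Rotating by $\pi/2$, this equals $\pm\,X(\gamma(t))\cdot n(\gamma(t))$ up to the sign and the length of $\gamma'(t)$, where $n$ is a normal to the curve. Thus $\frac{d}{dt}\varphi(\gamma(t))=0$ for all $t$ if and only if $X\cdot n$ vanishes identically along $\gamma$; integrating, $\varphi\circ\gamma$ is constant iff $X$ is co-normal. Since $[0,1]$ is connected this gives $\varphi\equiv\lambda$ on $\gamma$ for a single constant $\lambda$, which is exactly the claimed equivalence. (The co-normality definition also requires $X$ regular on $\gamma$; this is assumed in the hypothesis of the ``if'' direction and is part of what ``co-normal'' means.)

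For the last assertion — that $X$ must vanish somewhere inside the Jordan domain $U$ bounded by $\gamma$ — I would argue as follows. By the Jordan curve theorem $\gamma$ bounds a region $U$ whose closure $\overline U$ is compact, so the continuous function $\varphi$ attains its maximum and its minimum on $\overline U$. If both extrema were attained only on the boundary $\gamma$, then since $\varphi\equiv\lambda$ on $\gamma$ we would have $\varphi\equiv\lambda$ on all of $\overline U$, forcing $\nabla\varphi\equiv 0$ on the open set $U$, hence $X=\nabla^\perp\varphi\equiv 0$ on $U$ — so certainly $X$ has a zero in $U$. Otherwise at least one extremum is attained at an interior point $x_0\in U$, where $\varphi$ is $C^1$, so $\nabla\varphi(x_0)=0$, i.e. $X(x_0)=\nabla^\perp\varphi(x_0)=0$. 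Either way $\mathcal{Z}_X\cap U\neq\varnothing$.

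The main (minor) obstacle here is bookkeeping with signs and with the fact that $\gamma$ is only given as a $C^1$ Jordan curve rather than a regular embedding — one should note that $\gamma'$ need not be nonvanishing a priori, but one can reparametrize by arc length, or simply note that the identity $\nabla\varphi(\gamma(t))\cdot\gamma'(t)=-X^\perp(\gamma(t))\cdot\gamma'(t)$ holds pointwise regardless and that co-normality $X\cdot n=0$ on the curve is equivalent to $X$ being parallel to the tangent line at each point, which forces $X^\perp\cdot(\text{tangent})=0$ and hence $X^\perp\cdot\gamma'(t)=0$ at every $t$. There is no deep content; the statement is essentially the observation that a Hamiltonian flow preserves its Hamiltonian, packaged with a compactness/extreme-value argument for the existence of a critical point. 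I expect the write-up to be short.
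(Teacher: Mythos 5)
Your argument is correct and follows essentially the same route as the paper: the equivalence is exactly the chain-rule identity $\frac{d}{dt}\varphi(\gamma(t))=\nabla^\perp\varphi(\gamma(t))\cdot(-x_2'(t),x_1'(t))$, which is what the paper writes down. You additionally spell out the extreme-value argument for the existence of a critical point of $\varphi$ inside the Jordan domain, which the paper only asserts in a remark; that part is also correct.
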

\begin{proof}
Denote by $t\in[0,1]\mapsto (x_1(t),x_2(t))$ a parametrization of the curve $\gamma$. Then a normal vector to the curve is given by $n=(-x_2^\prime(t), x_1^\prime(t)).$ Now  $X$ is co-normal to this curve means that for \mbox{any $t\in[0,1]$}
\begin{eqnarray*}
\nabla^\perp \varphi(x_1(t), x_2(t))\cdot (-x_2^\prime(t), x_1^\prime(t))=0.
\end{eqnarray*}
The left-side term coincides with $\frac{d}{dt}\varphi(x_1(t), x_2(t))$ and thus the co-normal assumption becomes
$$
\frac{d}{dt}\varphi(x_1(t), x_2(t))=0, \quad\forall t\in [0,1].
$$
This is equivalent to say  that $\varphi$ is constant along  the curve $\gamma.$
\end{proof}
Our next goal  is to give a precise description of the push-forward of a Hamiltonian  vector-field $X_0$ and discuss  its {\it fozen-in} property. In broad terms,  the vector fields $(X_t)$   transported by a vector field $v$ according to the equation \eqref{tran12} will remain Hamiltonian and the dynamics of the {\it stream function} will be simply described by a transport equation. This has a deep connection of the {\it freezing} of the streamlines of  vector fields $(X_t)$ into the fluid motion. This latter  property was established for the magnetic field and collectively  known  as {\it Alfv\'en's theorem}.
\begin{lem}\label{lempro}
Let $\varphi_0:\RR^2\to\RR$ be a smooth function and $X_0=\nabla^\perp \varphi_0$. Then the solution to the equation \eqref{tran12} with initial datum $X_0
$ is given by
$$
X(t,x)=\nabla^\perp \varphi(t,x)
$$
with $\varphi$ the unique solution to the problem
$$
D_t\varphi=0,\quad \varphi(0,x)=\varphi_0(x).
$$

%Let $X_0=\nabla^\perp\varphi_0, Y_0=\nabla^\perp \psi_0$ and $[X_0, Y_0]=0.$ Then there exists $F:\RR\to\RR$ such that
%$$
%\psi(t,x)=F(\varphi(t,x)) 
%$$
\end{lem}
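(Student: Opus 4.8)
The plan is to produce the candidate vector field by hand and then conclude by uniqueness for \eqref{tran12}. Let $\varphi$ denote the unique solution of the scalar transport equation $D_t\varphi=\partial_t\varphi+v\cdot\nabla\varphi=0$ with $\varphi(0,\cdot)=\varphi_0$; along the flow $\psi$ of $v$ it is given by the Lagrangian formula $\varphi(t,x)=\varphi_0(\psi^{-1}(t,x))$. Set $\widetilde X\triangleq\nabla^\perp\varphi$. I would check that $\widetilde X$ solves \eqref{tran12} with initial datum $\nabla^\perp\varphi_0=X_0$; since, for $v$ in the Lipschitz class, \eqref{tran12} is a linear transport-type equation whose solution is the push-forward \eqref{fiel} and is therefore unique, this forces $X=\widetilde X$, which is the assertion.

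To verify that $\widetilde X$ solves \eqref{tran12}, apply the operator $\nabla^\perp$ to the identity $\partial_t\varphi+v\cdot\nabla\varphi=0$. A direct computation (expanding all derivatives) gives the elementary commutation identity, valid for any divergence-free $v$,
$$
\nabla^\perp(v\cdot\nabla\varphi)=v\cdot\nabla(\nabla^\perp\varphi)-(\nabla^\perp\varphi)\cdot\nabla v ,
$$
the term one would otherwise pick up being proportional to $(\textnormal{div }v)\,\nabla^\perp\varphi$, which vanishes here. Hence $\partial_t\widetilde X+v\cdot\nabla\widetilde X=\widetilde X\cdot\nabla v$, as wanted. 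Alternatively, and perhaps more transparently, one can argue at the Lagrangian level: the flow $\psi_t$ of the divergence-free field $v$ is area-preserving, and a short matrix computation based on \eqref{fiel} shows that for an area-preserving diffeomorphism $\phi$ of $\RR^2$ one has $\phi_\star(\nabla^\perp\varphi_0)=\nabla^\perp(\varphi_0\circ\phi^{-1})$; applying this with $\phi=\psi_t$ and recalling $\varphi(t,\cdot)=\varphi_0\circ\psi_t^{-1}$ yields $X_t=\nabla^\perp\varphi(t,\cdot)$ at once. Both routes are elementary and routine.

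There is essentially no hard step here; the only point that must not be overlooked is that the statement genuinely uses that $v$ is divergence-free — this is the standing assumption of the section and is precisely what makes the push-forward of a Hamiltonian field Hamiltonian — together with $v\in\textnormal{Lip}$ so that the flow, hence the solution of \eqref{tran12}, is well defined and unique. Once $\varphi$ is known to obey a pure transport equation, the lemma also makes the frozen-in (Alfv\'en) picture precise: the level sets of $\varphi_0$, i.e. the streamlines of $X_0$, are simply carried along by the flow of $v$, so that a curve which is a level set of $\varphi_0$ remains, under the flow, a level set of $\varphi(t,\cdot)$.
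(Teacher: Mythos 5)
Your proposal is correct and follows exactly the paper's route: the paper likewise verifies (calling it "straightforward computations") that $\nabla^\perp\varphi$ solves \eqref{tran12} and concludes by uniqueness of the Cauchy problem. You merely make explicit the commutation identity and the role of $\textnormal{div}\,v=0$, which the paper leaves implicit.
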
 
\begin{proof}
It is straightforward computations that the vector field $x\mapsto \nabla^\perp \varphi(t,x)$ satisfies also the  equation \eqref{tran12} and thus by uniqueness of the Cauchy problem we get the desired result.

\end{proof}

\section{Vorticity-current  formulation}
Recall that the vorticity of the velocity $v$ coincides in two dimensions with the scalar function  $\omega=\partial_1 v^2-\partial_2 v^1$ and the current density of the magnetic field $b$ is given by $j=\partial_1 b^2-\partial_2 b^1.$
Applying the curl operator to  the first equation of \eqref{MHD} and using the notation $D_t=\partial_t+v\cdot\nabla$ to denote the material derivative we get
$$
D_t\omega={b\cdot\nabla j}.
$$ 
Remark that we have used the following identity: for any two-dimensional vector field $X$ we have
$$
\rot(X\cdot\nabla X)=X\cdot\nabla \rot{X}+\rot{X}\,\Div X.
$$
Performing similar computations  for the second equation of \eqref{MHD} one gets
$$
D_t j=b\cdot \nabla \omega+\big(\partial_1b\cdot\nabla v^2-\partial_2b\cdot\nabla v^1\big)-\big(\partial_1v\cdot\nabla b^2-\partial_2v\cdot\nabla b^1\big).
$$
By straightforward computations we can easily check that
\begin{eqnarray*}
\partial_1b\cdot\nabla v^2-\partial_2b\cdot\nabla v^1&=&-\big(\partial_1v\cdot\nabla b^2-\partial_2v\cdot\nabla b^1\big)+\omega\,\Div\, b+j\,\Div\, v\\
&=&-\big(\partial_1v\cdot\nabla b^2-\partial_2v\cdot\nabla b^1\big).
\end{eqnarray*}
Consequently the MHD system can be written in terms of the coupled equations on $\omega$ and $j$,
\begin{equation}
\label{E}
 \left\{ 
\begin{array}{ll} 
D_t\omega=b\cdot\nabla j\\
D_tj=b\cdot\nabla\omega+2\partial_1b\cdot\nabla v^2-2\partial_2b\cdot\nabla v^1.
\end{array} \right.    
     \end{equation}
 For reasons that will be apparent shortly in the proof of Theorem \ref{thm1} we shall need some algebraic structure especially  for the last term of \eqref{E}.   
 
 We shall start with the following identities used  in \cite{Chez,che1} and  whose proof are very  simple. Let $X=(X_1,X_2)$ be a smooth  vector field over $\RR^2,$ then
     \begin{eqnarray}\label{Iden34}
     |X(x)|^2\partial_{11}&=&X_1\partial_X\partial_1-X_2\partial_X\partial_2+X_2^2\Delta,\\
    \nonumber |X(x)|^2\partial_{22}&=&X_2\partial_X\partial_2-X_1\partial_X\partial_1+X_1^2\Delta,\\
     \nonumber |X(x)|^2\partial_{12}&=&X_1\partial_X\partial_2+X_2\partial_X\partial_1-X_1 X_2\Delta.
     \end{eqnarray}
     Applying these identities to $\Delta^{-1}\omega$ and using Biot-Savart law $\Delta v=\nabla^\perp\omega$ we get for any  $x\in \RR^2$
      \begin{eqnarray}\label{yasser}
     |X(x)|^2\mathcal{R}_{11}\omega&=&X_1\partial_X v^2+X_2\partial_X v^1+X_2^2\omega,\\
  \nonumber |X(x)|^2\mathcal{R}_{22}\omega&=&-X_2\partial_X v^1-X_1\partial_X v^2+X_1^2\omega,\\
     \nonumber |X(x)|^2\mathcal{R}_{12}\omega&=&-X_1\partial_X v^1+X_2\partial_X v^2-X_1 X_2\omega,
     \end{eqnarray}
    where we denote by $\mathcal{R}_{ij}$ the Riesz transform $\partial_{ij}\Delta^{-1}$. Therefore we obtain 
     \begin{equation}\label{vort}
     |X(x)|^2|\nabla v(x)|\lesssim \|X\|_{L^\infty}\|\partial_X v\|_{L^\infty}+\|X\|_{L^\infty}^2\|\omega\|_{L^\infty}.
     \end{equation}
     Similarly we find
     \begin{equation}\label{vorts1}
     |X(x)|^2|\nabla b(x)|\lesssim \|X\|_{L^\infty}\|\partial_X b\|_{L^\infty}+\|X\|_{L^\infty}^2\| j\|_{L^\infty}.
     \end{equation}
    The following lemma will play a crucial role in the proof of Theorem \ref{thm1}.
     \begin{lem}
     For smooth divergence-free vector fields $X, b$ and $v $ we get for $X(x)\neq 0,$
    \begin{eqnarray}\label{Iden1}
     H(v,b)&\triangleq&\partial_1b\cdot\nabla v^2-\partial_2b\cdot\nabla v^1\\
     \nonumber&=&\frac{2}{|X|^2}\Big\{\partial_X b^1\,\partial_X v^2-\partial_X b^2\,\partial_X v^1\Big\}\\
     \nonumber&+&\frac{1}{|X|^2}\Big\{j\, X\cdot\partial_X v -\omega\, X\cdot \partial_X b\Big\}.
     %\nonumber&\triangleq&H_1(v,b)+H_2(v,b). 
     \end{eqnarray}
    The dot $\bf\cdot$ denotes the canonical inner product of $\RR^2.$ 
     \end{lem}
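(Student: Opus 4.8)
The identity is purely algebraic, and the natural plan is to convert every first-order derivative of $v$ and $b$ into a Lie derivative $\partial_X$ by means of \eqref{yasser}. First I would record the counterpart of \eqref{yasser} for the magnetic field, obtained in exactly the same way as \eqref{yasser} itself (apply the identities \eqref{Iden34} to $\Delta^{-1}j$ and use $\Delta b=\nabla^\perp j$):
\begin{align*}
|X|^2\mathcal{R}_{11}j&=X_1\partial_X b^2+X_2\partial_X b^1+X_2^2\, j,\\
|X|^2\mathcal{R}_{22}j&=-X_2\partial_X b^1-X_1\partial_X b^2+X_1^2\, j,\\
|X|^2\mathcal{R}_{12}j&=-X_1\partial_X b^1+X_2\partial_X b^2-X_1X_2\, j.
\end{align*}
Here, as in the derivation of \eqref{yasser}, I use $v=\nabla^\perp\Delta^{-1}\omega$ and $b=\nabla^\perp\Delta^{-1}j$, so that $\nabla v^1=(-\mathcal{R}_{12}\omega,-\mathcal{R}_{22}\omega)$, $\nabla v^2=(\mathcal{R}_{11}\omega,\mathcal{R}_{12}\omega)$, and likewise $\nabla b^1=(-\mathcal{R}_{12}j,-\mathcal{R}_{22}j)$, $\nabla b^2=(\mathcal{R}_{11}j,\mathcal{R}_{12}j)$.

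Plugging these expressions into $H(v,b)=\partial_1b\cdot\nabla v^2-\partial_2b\cdot\nabla v^1$ and collecting the terms that carry a factor $\mathcal{R}_{12}$, I expect to obtain the intermediate identity
$$
H(v,b)=\mathcal{R}_{12}\omega\,\big(\mathcal{R}_{11}j-\mathcal{R}_{22}j\big)+\mathcal{R}_{12}j\,\big(\mathcal{R}_{22}\omega-\mathcal{R}_{11}\omega\big).
$$
I would then multiply by $|X|^4$ and substitute, for each of the four factors, the corresponding line of \eqref{yasser} (for $\mathcal{R}_{12}\omega$ and $\mathcal{R}_{11}\omega-\mathcal{R}_{22}\omega$) and of its magnetic analogue above (for $\mathcal{R}_{12}j$ and $\mathcal{R}_{11}j-\mathcal{R}_{22}j$). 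The right-hand side then becomes a polynomial in $\partial_X v^1,\partial_X v^2,\partial_X b^1,\partial_X b^2,\omega,j$ whose coefficients are polynomials in $X_1,X_2$.

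The last step is to expand this product and simplify. Writing $|X|^2=X_1^2+X_2^2$ throughout, the monomials $\partial_X v^1\,\partial_X b^1$ and $\partial_X v^2\,\partial_X b^2$ cancel identically, and the coefficient of each remaining monomial carries a common factor $|X|^2$; after cancelling it one is left with
$$
|X|^2 H(v,b)=2\big\{\partial_X b^1\,\partial_X v^2-\partial_X b^2\,\partial_X v^1\big\}+j\,\big(X_1\partial_X v^1+X_2\partial_X v^2\big)-\omega\,\big(X_1\partial_X b^1+X_2\partial_X b^2\big),
$$
which is exactly \eqref{Iden1} once one recognises $X_1\partial_X v^1+X_2\partial_X v^2=X\cdot\partial_X v$ and similarly for $b$. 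I do not see any genuine difficulty in this; the only delicate point is organising the bookkeeping so that the two cancellations and the common factor $|X|^2$ come out cleanly. A marginally shorter route, avoiding the Riesz notation altogether, is to apply the three identities \eqref{Iden34} directly to the stream functions $\Delta^{-1}\omega$ and $\Delta^{-1}j$ and substitute the six resulting relations for $\partial_iv^k$ and $\partial_ib^k$ into the definition of $H(v,b)$; this amounts to the same computation.
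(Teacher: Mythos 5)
Your proposal is correct and follows the paper's proof essentially verbatim: the paper likewise rewrites $H(v,b)$ as $\mathcal{R}_{12}\omega\,(\mathcal{R}_{11}j-\mathcal{R}_{22}j)-\mathcal{R}_{12}j\,(\mathcal{R}_{11}\omega-\mathcal{R}_{22}\omega)$ via Biot--Savart, substitutes \eqref{yasser} and its magnetic analogue, and subtracts the two products. The cancellations you anticipate (of the $\partial_Xv^1\partial_Xb^1$ and $\partial_Xv^2\partial_Xb^2$ monomials, and the common factor $|X|^2$) do occur exactly as you describe.
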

     \begin{proof}
   According to  Biot-Savart laws one has
     $$
     H(v,b)=\mathcal{R}_{12}\omega\big(\mathcal{R}_{11} j-\mathcal{R}_{22} j  \big)-\mathcal{R}_{12}j\big(\mathcal{R}_{11} \omega-\mathcal{R}_{22} \omega  \big).
     $$
     Using \eqref{yasser} we get
     \begin{eqnarray*}
     \mathcal{R}_{11} j-\mathcal{R}_{22} j &=&\frac{1}{|X(x)|^2}\Big(2X_1\partial_Xb^2+2X_2\partial_X b^1+(X_2^2-X_1^2)j \Big)
     \end{eqnarray*}
     and thus
     \begin{eqnarray*}
     \mathcal{R}_{12}\omega\big(\mathcal{R}_{11} j-\mathcal{R}_{22} j  \big)&=&\frac{1}{|X(x)|^4}\Big(-X_1\partial_X v^1+X_2\partial_X v^2-X_1 X_2\omega  \Big)\\
     &\times&\Big(2X_1\partial_Xb^2+2X_2\partial_X b^1+(X_2^2-X_1^2)j \Big).
     \end{eqnarray*}
     Similarly we get
      \begin{eqnarray*}
 \mathcal{R}_{12}j\big(\mathcal{R}_{11} \omega-\mathcal{R}_{22} \omega  \big)&=&\frac{1}{|X(x)|^4}\Big(-X_1\partial_X b^1+X_2\partial_X b^2-X_1 X_2j \Big)\\
     &\times&\Big(2X_1\partial_Xv^2+2X_2\partial_X v^1+(X_2^2-X_1^2)\omega \Big).
     \end{eqnarray*}
     Subtracting the preceding identities yields to the desired identity.
     
     \end{proof}

\subsection{Weak estimates}
In what follows we shall investigate some weak estimates for the vorticity and the current density.
     \begin{prop}\label{a priori}
    Let  $(\omega, j) $ be a smooth solution of the system \eqref{E0} then the following results hold true.
     \begin{enumerate}
     \item For  $\omega_0,j_0\in L^p$ with $1<p<\infty$ we get 
     $$
     \|(\omega,j)(t)\|_{L^p}\le C\|(\omega_0, j_0)\|_{L^p}e^{C\int_0^t\|\nabla v(\tau)\|_{L^\infty}d\tau}.
     $$

     \item For $\omega_0,j_0\in L^\infty$ we get
     $$
     \|(\omega,j)(t)\|_{L^\infty}\le C\|(\omega_0, j_0)\|_{L^\infty}+\int_0^t\|\nabla v(\tau)\|_{L^\infty}\|\nabla b(\tau)\|_{L^\infty}d\tau.
     $$
       \item Let $\omega_0,j_0\in L^1\cap L^2$, we get
     $$
      \|(\omega,j)(t)\|_{L^1}\le C \|(\omega_0, j_0)\|_{L^1}+C\|(\omega_0, j_0)\|_{L^2}^2 t\,e^{C\int_0^t\|\nabla v(\tau)\|_{L^\infty}d\tau}.
     $$
     \end{enumerate}
     \end{prop}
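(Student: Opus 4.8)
The three estimates all exploit the coupled transport structure \eqref{E0} together with the material derivative $D_t=\partial_t+v\cdot\nabla$ and the two basic tools already in hand: the $L^p$ transport estimate of Proposition \ref{l555}(1) (which costs nothing since $v$ is divergence-free) and the pointwise inequalities \eqref{vort}, \eqref{vorts1} bounding $|\nabla v|$ and $|\nabla b|$ in $L^\infty$. The only genuinely new ingredient is how to absorb the awkward term $\mathcal H(v,b)=2\partial_1 b\cdot\nabla v^2-2\partial_2 b\cdot\nabla v^1$ appearing in the current equation.

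\emph{Estimate (1) ($L^p$, $1<p<\infty$).} Here I would rewrite the system in Els\"asser variables $\Phi=\omega+j$, $\Psi=\omega-j$, exactly as in the proof of Proposition \ref{transport}. The transport operators become $\partial_t+(v\mp b)\cdot\nabla$, which are still divergence-free, so Proposition \ref{l555}(1) applies and gives $\|\Phi(t)\|_{L^p}+\|\Psi(t)\|_{L^p}\le \|\Phi(0)\|_{L^p}+\|\Psi(0)\|_{L^p}+\int_0^t(\|F\|_{L^p}+\|G\|_{L^p})d\tau$, where here the only source term is $\pm\mathcal H(v,b)$ distributed between the two equations. To control $\|\mathcal H(v,b)\|_{L^p}$ I would use that $\mathcal H$ is a finite linear combination of products $\mathcal R_{ik}\omega\,\mathcal R_{lm}j$ of iterated Riesz transforms, and that Riesz transforms are bounded on $L^p$ for $1<p<\infty$; combined with H\"older's inequality $\|\mathcal R\omega\cdot\mathcal R j\|_{L^p}\le \|\mathcal R\omega\|_{L^\infty}\|\mathcal R j\|_{L^p}\lesssim \|\nabla v\|_{L^\infty}\|j\|_{L^p}$ (and symmetrically), one gets $\|\mathcal H(v,b)(\tau)\|_{L^p}\lesssim \|\nabla v(\tau)\|_{L^\infty}\,\|(\omega,j)(\tau)\|_{L^p}$. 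Gr\"onwall's lemma then yields the exponential bound with $C\int_0^t\|\nabla v\|_{L^\infty}$ in the exponent. One must be slightly careful that the $L^\infty$ factor should be a gradient-of-velocity bound and not a current bound; using \eqref{vort}-style estimates, or simply $\|\mathcal R_{ik}\omega\|_{L^\infty}\sim\|\nabla v\|_{L^\infty}$ up to the usual logarithmic subtlety, one keeps only $\|\nabla v\|_{L^\infty}$ in the exponent as stated.

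\emph{Estimate (2) ($L^\infty$).} Again pass to Els\"asser variables and apply the $L^\infty$ transport bound of Proposition \ref{l555}(1) with $p=\infty$, which is just the maximum principle and gives $\|(\omega,j)(t)\|_{L^\infty}\le C\|(\omega_0,j_0)\|_{L^\infty}+\int_0^t\|\mathcal H(v,b)(\tau)\|_{L^\infty}d\tau$. Now estimate $\|\mathcal H(v,b)\|_{L^\infty}\le C\|\nabla v\|_{L^\infty}\|\nabla b\|_{L^\infty}$ directly from the definition \eqref{source}. This produces precisely the claimed inequality; note there is deliberately no Gr\"onwall step here, reflecting the known fact that no closed a priori $L^\infty$ bound is available for the MHD vorticity.

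\emph{Estimate (3) ($L^1$).} The difficulty is that Riesz transforms are \emph{not} bounded on $L^1$, so the argument of (1) fails at $p=1$ and one cannot control $\|\mathcal H(v,b)\|_{L^1}$ by $\|\nabla v\|_{L^\infty}\|(\omega,j)\|_{L^1}$. This is the main obstacle. The fix is to interpolate: since $\mathcal H(v,b)$ is a sum of products of two Riesz-transformed quantities, write $\|\mathcal H(v,b)\|_{L^1}\le \sum\|\mathcal R\omega\|_{L^2}\|\mathcal R j\|_{L^2}\lesssim \|\omega\|_{L^2}\|j\|_{L^2}$ by Cauchy--Schwarz and $L^2$-boundedness of Riesz transforms. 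Thus the $L^1$ norm of the source is controlled by the \emph{$L^2$} norms of $\omega$ and $j$, for which part (1) with $p=2$ already gives $\|(\omega,j)(\tau)\|_{L^2}\le C\|(\omega_0,j_0)\|_{L^2}e^{C\int_0^\tau\|\nabla v\|_{L^\infty}}$. Feeding this back into the $L^1$ transport inequality $\|(\omega,j)(t)\|_{L^1}\le C\|(\omega_0,j_0)\|_{L^1}+\int_0^t\|\mathcal H(v,b)(\tau)\|_{L^1}d\tau$ and bounding the time integral by $t$ times the (monotone) supremum of the $L^2$ bound gives exactly $\|(\omega,j)(t)\|_{L^1}\le C\|(\omega_0,j_0)\|_{L^1}+C\|(\omega_0,j_0)\|_{L^2}^2\,t\,e^{C\int_0^t\|\nabla v\|_{L^\infty}}$. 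The key point throughout is that the cubic-looking obstruction $\mathcal H(v,b)$ is always a product of two Calder\'on--Zygmund-regular factors, so one trades integrability exponents ($L^1\leftarrow L^2\times L^2$, $L^p\leftarrow L^\infty\times L^p$) to stay inside the range where Riesz transforms are bounded.
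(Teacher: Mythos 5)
Your proposal is correct and follows essentially the same route as the paper: reduce to the coupled transport estimate (Proposition \ref{transport}, i.e.\ Els\"asser variables), bound the source $\mathcal H(v,b)$ in $L^p$ by $\|\nabla v\|_{L^\infty}\|\nabla b\|_{L^p}\lesssim\|\nabla v\|_{L^\infty}\|j\|_{L^p}$ and close by Gr\"onwall, take the source in $L^\infty$ directly for (2), and use Cauchy--Schwarz with the $L^2$ bound from (1) for (3). The only cosmetic remark is that in (1) there is no ``logarithmic subtlety'': $\mathcal R_{ik}\omega$ are exactly the components of $\nabla v$, so $\|\mathcal R_{ik}\omega\|_{L^\infty}\le\|\nabla v\|_{L^\infty}$ holds outright.
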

     \begin{proof}
    ${\bf{(1)}}$  Applying Proposition \ref{transport} to the equation \eqref{E} we get
     $$
     \|\omega(t)\|_{L^p}+\|j(t)\|_{L^p}\lesssim   \|\omega_0\|_{L^p}+\|j_0\|_{L^p}+\int_0^t\|\nabla b(\tau)\|_{L^p}\|\nabla v(\tau)\|_{L^\infty}d\tau
     $$
     Using the continuity of Riesz transform on $L^p$ with $p\in]1,\infty[$ one gets
     $$
     \|\nabla b(t)\|_{L^p}\lesssim \| j(t)\|_{L^p}
     $$
    which  yields in turn
     $$
          \|\omega(t)\|_{L^p}+\|j(t)\|_{L^p}\lesssim   \|\omega_0\|_{L^p}+\|j_0\|_{L^p}+\int_0^t\| j(\tau)\|_{L^p}\|\nabla v(\tau)\|_{L^\infty}d\tau.
$$
It suffices now to apply Gronwall inequality in order to get the suitable estimate.
\vspace{0,2cm}

${\bf{(2)}}$ Using once again Proposition  \ref{transport} implies
$$
         \|\omega(t)\|_{L^\infty}+\|j(t)\|_{L^\infty}\lesssim   \|\omega_0\|_{L^\infty}+\|j_0\|_{L^\infty}+\int_0^t\|\nabla b(\tau)\|_{L^\infty}\|\nabla v(\tau)\|_{L^\infty}d\tau.
$$
which is the desired result.
\vspace{0,2cm}

${\bf{(3)}}$ Arguing as before and using H\"{o}lder inequality we obtain
$$
 \|\omega(t)\|_{L^1}+\|j(t)\|_{L^1}\lesssim   \|\omega_0\|_{L^1}+\|j_0\|_{L^1}+\int_{0}^t\|\omega(\tau)\|_{L^2}\|j(\tau)\|_{L^2}d\tau
$$
At this stage we combine this estimate with the one of the first part $(1)$.
     \end{proof}
     
\section{Stationary patches}\label{stat}
As we can readily  observe from the vorticity-current formulation \eqref{E0} the structure of the initial patches $\omega_0=\chi_\Omega, j_0=\chi_D$ cannot be in general  conserved in time in contrast with the incompressible Euler equations. This is due peculiarly to the last two terms in the second equation  involving Riesz transforms. In what follows we shall look for stationary solutions for \eqref{MHD} in the framework of vortex patches. In other words, we shall characterize the simply connected  bounded domains $\Omega$ and $D$ such that   $\omega(t)=\chi_{\Omega}$ and $j(t)=\chi_D$ is  a solution for the system \eqref{E0}.  
First observe that when the domains are concentric balls then according to the symmetry invariance of the equations  we obtain a stationary solution. We will see that in the case of the disjoint patches  these are the only examples of  stationary  solutions. The proof that we shall present  of this intuitive result  is not trivial  but it will make appeal  to a deep result of potential theory which characterize the circle with Newtonian potential. Our result which was introduced in Theorem \ref{The44} will be now  restated only for the inviscid MHD system.
\begin{Theo}\label{The47}
Let $D$ and $\Omega$ be two simply connected domains  and $\omega_0={\chi}_{\Omega}$,   $j_0={\chi}_{D}.$ Then the following holds true:
\begin{enumerate}
\item If $D=\Omega$ then $(\omega_0, j_0)$ is a stationary solution for the MHD system \eqref{MHD}.
\item If the boundaries $\partial D$ and $ \partial\Omega$ are disjoint rectifiable Jordan curves   then $(\omega_0, j_0)$ is a stationary solution for the MHD system \eqref{MHD} if and only if  $\Omega$ and $D$ are concentric balls.

\end{enumerate}
\end{Theo}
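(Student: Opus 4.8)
The plan is to dispatch assertion (1) by an explicit construction and assertion (2) by reducing stationarity to overdetermined boundary conditions on the stream functions and then invoking Fraenkel's rigidity theorem. For (1) it suffices to exhibit one stationary solution with the prescribed vorticity and current: take $v_{0}=b_{0}=\nabla^{\perp}\Delta^{-1}\chi_{\Omega}$, so that $\omega_{0}=j_{0}=\chi_{\Omega}$; then $b_{0}\cdot\nabla b_{0}=v_{0}\cdot\nabla v_{0}$ and $b_{0}\cdot\nabla v_{0}=v_{0}\cdot\nabla b_{0}$, so the time-independent $(v_{0},b_{0})$ solves \eqref{MHD} with $p\equiv 0$. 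In the formulation \eqref{E0} this is the cancellation of the two transport terms together with the identity $\mathcal H(v_{0},v_{0})=0$, evident from the Biot--Savart representation $\mathcal H(v,b)=2\big(\mathcal R_{12}\omega(\mathcal R_{11}j-\mathcal R_{22}j)-\mathcal R_{12}j(\mathcal R_{11}\omega-\mathcal R_{22}\omega)\big)$ (cf. the proof of \eqref{Iden1}). The ``if'' part of (2) is analogous: if $\Omega=B(a,R)$ and $D=B(a,R')$, then $v=\nabla^{\perp}\Delta^{-1}\chi_{\Omega}$ and $b=\nabla^{\perp}\Delta^{-1}\chi_{D}$ are azimuthal about $a$, hence tangent to both $\partial\Omega$ and $\partial D$, so $v\cdot\nabla\chi_{\Omega}=b\cdot\nabla\chi_{D}=v\cdot\nabla\chi_{D}=b\cdot\nabla\chi_{\Omega}=0$; and $\mathcal H(v,b)=0$ since for radial stream functions $\mathcal R_{11}-\mathcal R_{22}$ and $\mathcal R_{12}$ applied to $\omega$ and to $j$ produce proportional angular factors, so the two products in the above representation cancel (alternatively, invoke rotational invariance of \eqref{MHD} about $a$ together with uniqueness).

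For the ``only if'' part of (2), assume $(\chi_{\Omega},\chi_{D})$ is stationary and set $v=\nabla^{\perp}\Delta^{-1}\chi_{\Omega}$, $b=\nabla^{\perp}\Delta^{-1}\chi_{D}$; stationarity of \eqref{E0} reads
\[
v\cdot\nabla\chi_{\Omega}=b\cdot\nabla\chi_{D},\qquad v\cdot\nabla\chi_{D}=b\cdot\nabla\chi_{\Omega}+\mathcal H(v,b).
\]
Here $v$ and $b$ are continuous, and in fact $v$ is smooth near $\partial D$ and $b$ is smooth near $\partial\Omega$, the opposite vorticity/current being locally constant off its own boundary. Hence $v\cdot\nabla\chi_{\Omega}=-(v\cdot\vec n)\,d\sigma_{\partial\Omega}$ and $b\cdot\nabla\chi_{D}=-(b\cdot\vec n)\,d\sigma_{\partial D}$ are finite measures carried by the two \emph{disjoint} curves, so the first equation forces each of them to vanish: $v$ is tangent to $\partial\Omega$ and $b$ is tangent to $\partial D$. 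In the second equation $v\cdot\nabla\chi_{D}$ and $b\cdot\nabla\chi_{\Omega}$ are again measures carried by $\partial D$ and $\partial\Omega$ respectively, whereas $\mathcal H(v,b)$ --- a finite sum of products $\mathcal R_{ik}\chi_{\Omega}\cdot\mathcal R_{lm}\chi_{D}$ of two $\mathrm{BMO}$, hence $L^{q}_{\mathrm{loc}}$ ($q<\infty$), functions --- lies in $L^{1}_{\mathrm{loc}}$ and carries no part singular with respect to Lebesgue measure; matching the curve-supported parts on each side then yields that $v$ is tangent to $\partial D$ and $b$ is tangent to $\partial\Omega$ as well.

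In particular $\psi:=\Delta^{-1}\chi_{\Omega}$ is constant on $\partial\Omega$ (and also on $\partial D$). By Fraenkel's theorem --- equivalently, by assertion {\bf I)} of Theorem \ref{The44}, to be established in the same section --- $\Omega$ must be a ball $B(a,R)$; then $\psi$ is radial about $a$ with $\psi'(r)>0$ for every $r>0$, so the level sets of $\psi$ are exactly the circles centred at $a$. Since $\psi$ is constant on the Jordan curve $\partial D$, that curve is a connected subset of one such circle and therefore equals it, whence $D=B(a,R')$ is concentric with $\Omega$, which is the desired conclusion. The decisive input is Fraenkel's rigidity result; the only genuine technical point to be checked carefully is the absolute continuity ($L^{1}_{\mathrm{loc}}$ membership) of $\mathcal H(v,b)$, which is precisely what makes it possible to decouple the coupled system by separating the measures supported on $\partial\Omega$ and on $\partial D$.
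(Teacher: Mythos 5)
Your overall strategy coincides with the paper's: kill the curve-supported measures in \eqref{E0} by disjointness of the supports and absolute continuity of $\mathcal H(v,b)$, conclude that the Newtonian potentials are constant on the boundaries, invoke Fraenkel's theorem, and get concentricity from the fact that the level sets of the potential of a ball are concentric circles. Part (1), the ``if'' direction of (2), and the separation of the singular and absolutely continuous parts in the second equation are all fine and essentially identical to the paper's argument (the paper phrases the absolute continuity of $\mathcal H(v_0,b_0)$ via $\nabla v_0,\nabla b_0\in L^p$ for all finite $p$, which is your BMO observation).

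The genuine gap is in the step from ``$\mathrm{div}(v\chi_\Omega)=0$ in $\mathcal D'$'' to ``$\Delta^{-1}\chi_\Omega$ is constant on $\partial\Omega$'' when $\partial\Omega$ is only a rectifiable Jordan curve. You write $v\cdot\nabla\chi_\Omega=-(v\cdot\vec n)\,d\sigma_{\partial\Omega}$ and then read off tangency pointwise; this identity is the classical Gauss--Green formula and is only justified for $C^1$ (or at least Lipschitz) boundaries. For a merely rectifiable curve one only has the reduced-boundary version from geometric measure theory, and even granting it, deducing that the potential is \emph{constant on the topological boundary} from a.e.\ vanishing of the normal trace is not immediate — this is precisely the point the paper flags as the main difficulty and resolves by a different route: it tests $\mathrm{div}(v\psi)$ against the conformal parametrization $\Phi$ of $\Omega$, exhausts $\Omega$ by the analytic domains $\Omega_r=\Phi(\mathbb D_r)$, and passes to the limit $r\to1$ using Carath\'eodory's theorem and the $H^1$ boundary behavior of $\Phi'$ (namely $\lim_{r\to1}\int_0^{2\pi}|\Phi'(re^{i\theta})-\Phi'(e^{i\theta})|\,d\theta=0$), arriving at $\frac{d}{d\theta}\{\varphi_0(\Phi(e^{i\theta}))\}=0$ a.e.\ and hence constancy on $\partial\Omega$. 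The same issue recurs when you deduce that $\Delta^{-1}\chi_\Omega$ is constant on the (rough) curve $\partial D$ in the concentricity step. Your closing remark that the only delicate point is the $L^1_{\mathrm{loc}}$ membership of $\mathcal H(v,b)$ therefore misidentifies where the real work lies; as written, your argument proves the theorem only for $C^1$ boundaries.
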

Some remarks are in order.

\begin{rema}
We can deduce from the proof that for  the two-dimensional incompressible Euler equations the  stationary patches with rectifiable Jordan boundaries are given by   $\chi_{\Omega}$ with $\Omega$ a ball.
\end{rema}
\begin{rema}
In the case of the $2d$ incompressible Euler equations we know according to Yudovich result that the patches give rise to unique global solutions. Whether or not  the same claim remains true for   the MHD equations  even locally in time is not at all clear.  We will see later in the next section that this can be   proved  for example for the patches with sufficiently smooth boundaries.
 \end{rema}
The proof of Theorem \ref{The47} relies on Franekel's result and will be divided into two steps depending on the smoothness of the boundaries. The case of  $C^1$ boundaries is more easier than the rectifiable ones and    we shall need  for this latter case more sophisticated analysis. Especially we  will use  the conformal mappings to parametrize the boundaries combined with some interesting properties on their boundary behavior.  For the clarity of the proofs it would be better  to   recall some basic results on conformal mappings and rectifiable Jordan curves that will be substantially   used later. This will be the subject of the next section.
\subsection{Conformal mappings}\label{conf-m}
We shall in the first part fix some notation and concepts. Afterwards we discuss the conformal mapping theorem and some basic results on the boundary  behavior of the conformal maps.  

\quad A planar curve $C$ is called  a {\it Jordan curve} if it is simple  and closed meaning that it can be parametrized by an injective continuous function $\gamma:\mathbb{T}\to \RR^2.$
This curve is said to be {\it rectifiable} if it is of {\it bounded variation} and its length $L$ is the total variation of $\gamma$. This means that
$$
L\triangleq \sup_{(\xi_i)_{i=1}^n\in \mathcal{P}}\sum_{k=1}^n|\gamma(\xi_{k+1})-\gamma(\xi_k)|<\infty
$$
where the supremum is taken over all the partition $\mathcal{P}$ of the unit circle $\mathbb{T}$.
\vspace{0,2cm}

The  following result due to Riemann is one of the most important results in complex analysis. To restate this result we shall recall the definition of {\it simply connected} domains. Let $\widehat{\mathbb{C}}\triangleq \mathbb{C}\cup\{\infty\}$ denote the Riemann sphere. We  say that a domain $\Omega\subset \widehat{\mathbb{C}}$ is {\it simply connected} if the set $ \widehat{\mathbb{C}}\backslash \Omega$ is connected.

{\bf Riemann Mapping Theorem.} Let $\mathbb{D}$ denote the unit open ball, $\Omega\subset \CC$ be a simply connected domain different from  $\CC$ and $z_0\in \Omega$. Then there is a unique bi-holomorphic map (conformal map) $\Phi:\mathbb{D}\to \Omega$ such that
$$
\Phi(0)=z_0\quad\hbox{and}\quad \hbox{arg } \Phi^\prime(0)>0.
$$
The area of the domain $\Omega$ is given by
$$
|\Omega|=\int_{\mathbb{D}}|\Phi^\prime(z)|dA(z),
$$
where $dA$ denotes the Lebesgue measure of the plane. In this theorem the regularity of the boundary has no effect regarding the existence of the conformal mapping  but as it was shown in various papers   it  will contribute in the boundary behavior of the conformal mapping, see for instance \cite{P,WS}. One of the main result in this subject  dealing with the continuous extension to the boundary goes back to Carath\'eodory. 
\vspace{0,2cm}

{\bf Carath\'eodory Theorem}. The conformal map $\Phi: \mathbb{D}\to \Omega$ has one-to-one continuous  extension to  the closure $\overline{\mathbb{D}}$ if and only if the boundary $\partial \Omega$ is a Jordan curve.
\vspace{0,2cm}

In the next theorem we shall discuss the characterization of rectifiable Jordan curves in terms of the regularity of the associated conformal map. This will require the use of Hardy space of \hbox{type $H^1$} which is defined as follows.   Let $f:\mathbb{D}\to \mathbb{C}$ be an analytic function, we define the integral means
$$
M(r,f)\triangleq \int_0^{2p}|f(r e^{i\theta})|d\theta,\, 0<r<1.
$$
The function $f$  is said to be of class $H^1$ if 
$$
\sup_{0<r<1}M(r,f)<\infty.
$$
A classical result known by the name {\it Hardy's convexity theorem} asserts that $r\mapsto M(r,f)$ is  a nondecreasing function and  $r\mapsto \log M(r,f)$ is a convex function of $\log r$. For the proof of this result see for example Theorem $1.5$ of  \cite{Duren}, a reference which provides  additional relevant information on the topic.
% relevant in where many other interesting properties of Hardy spaces are discussed.  additional relevant information on the topic 

 Next we  shall give an analytic characterization of   rectifiable curves through the regularity of the  conformal mapping.
\begin{Theo}\label{Hp}

The following assertions hold true.

{\bf{$\hbox{1})$}} 
Let $\Phi: \mathbb{D}\to \Omega$ be  the conformal mapping. Then $\partial\Omega$ is rectifiable if and only if $\Phi^\prime\in H^1$ and
$$
L=\lim_{r\to 1}\int_{0}^{2p}|\Phi^\prime(r e^{i\theta})| d\theta.
$$
{\bf{$\hbox{2})$}}  Let $f\in H^1$ then $f$ has an angular limit $f(e^{i\theta})$ almost everywhere on the boundary $\mathbb{T}$ and
$$
\lim_{r\to 1}\int_{0}^{2p}|f(r e^{ i\theta})|d\theta=\int_{0}^{2p}|f( e^{ i\theta})|d\theta,\quad\hbox{and}\quad \lim_{r\to 1}\int_{0}^{2p}|f(r e^{ i\theta})-f(e^{i\theta})|d\theta=0.
$$

\end{Theo}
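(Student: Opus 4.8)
\emph{Proof strategy.} The two assertions are classical facts from the theory of $H^p$ spaces, and the plan is to reduce both of them to the boundary behaviour of $H^1$ functions together with the theorem of F.\ and M.\ Riesz on analytic measures; for a fully detailed treatment we would refer the reader to \cite{Duren} and \cite{P}. We start with assertion $2)$, which is the analytic backbone. The key reduction is the factorization $f=g\,h$ with $g,h\in H^2$: since $f\in H^1$ belongs to the Nevanlinna class, its zeros $(z_k)$ satisfy the Blaschke condition $\sum_k(1-|z_k|)<\infty$, so the associated Blaschke product $B$ converges, $f/B$ is a zero-free element of $H^1$, and one sets $h:=(f/B)^{1/2}\in H^2$ and $g:=Bh\in H^2$, so that $f=gh$. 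For functions in $H^2$ the boundary theory is elementary Hilbert-space theory: if $u=\sum_{n\ge0}a_nz^n\in H^2$ then $\sum_n|a_n|^2<\infty$, $u$ admits an angular limit $u^\ast\in L^2(\mathbb{T})$ almost everywhere, and $u_r\to u^\ast$ both pointwise almost everywhere and in $L^2(\mathbb{T})$ as $r\to1$. Applying this to $g$ and $h$, the products $f_r=g_r h_r$ converge almost everywhere to $f^\ast:=g^\ast h^\ast$, and by the Cauchy--Schwarz inequality
$$
\|f_r-f^\ast\|_{L^1}\le\|g_r-g^\ast\|_{L^2}\,\|h_r\|_{L^2}+\|g^\ast\|_{L^2}\,\|h_r-h^\ast\|_{L^2}\longrightarrow 0,\qquad r\to1 ,
$$
so $f_r\to f^\ast$ in $L^1(\mathbb{T})$; passing to $L^1$-norms then gives $M(r,f)\to\int_0^{2\pi}|f^\ast(e^{i\theta})|\,d\theta$, which is assertion $2)$.

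For the necessity part of $1)$, suppose $\partial\Omega$ is rectifiable, hence a Jordan curve. By Carath\'eodory's theorem $\Phi$ extends to a homeomorphism of $\overline{\mathbb{D}}$ onto $\overline{\Omega}$, and $\theta\mapsto\Phi(e^{i\theta})$ is a monotone parametrization of $\partial\Omega$, hence of bounded variation with total variation $L$. Writing $d\Phi$ for the induced $\CC$-valued measure on $\mathbb{T}$, an integration by parts yields, for every $n\in\ZZ$,
$$
\int_0^{2\pi}e^{-in\theta}\,d\Phi(e^{i\theta})=in\int_0^{2\pi}e^{-in\theta}\,\Phi(e^{i\theta})\,d\theta ,
$$
and the right-hand side vanishes for $n\le0$ because $\Phi$ is holomorphic in $\mathbb{D}$ and continuous on $\overline{\mathbb{D}}$. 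By the theorem of F.\ and M.\ Riesz the measure $d\Phi$ is therefore absolutely continuous, so $\theta\mapsto\Phi(e^{i\theta})$ is absolutely continuous with derivative $\psi\in L^1(\mathbb{T})$ whose Fourier coefficients still vanish for $n\le0$. Its Poisson integral $G$ then obeys $M(r,G)\le\|\psi\|_{L^1}$ for all $r<1$, so $G\in H^1$; comparing power series identifies $G(z)=iz\,\Phi'(z)$, whence $\Phi'\in H^1$ (the factor $z^{-1}$ is harmless, $\Phi'$ being bounded near the origin). Finally, by assertion $2)$ the boundary function of $G$ is $\psi(\theta)=ie^{i\theta}(\Phi')^\ast(e^{i\theta})$ almost everywhere, so
$$
L=\int_0^{2\pi}|\psi(\theta)|\,d\theta=\int_0^{2\pi}\big|(\Phi')^\ast(e^{i\theta})\big|\,d\theta=\lim_{r\to1}M(r,\Phi') .
$$

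For the converse, assume $\Phi'\in H^1$. By assertion $2)$ it has boundary values $(\Phi')^\ast\in L^1(\mathbb{T})$ with $\Phi'_r\to(\Phi')^\ast$ in $L^1(\mathbb{T})$; a standard argument (integrating $\Phi'$ along the circular arcs $\{\rho e^{it}\}$ and letting $\rho\to1$; see \cite{Duren}, Ch.~3, or \cite{P}) then shows that $\Phi$ extends continuously to $\overline{\mathbb{D}}$ with $\Phi(e^{i\theta})=\Phi(0)+\int_0^\theta ie^{it}(\Phi')^\ast(e^{it})\,dt$. Hence $\theta\mapsto\Phi(e^{i\theta})$ is absolutely continuous, $\partial\Omega=\Phi(\mathbb{T})$ is rectifiable, and its length equals $\int_0^{2\pi}|(\Phi')^\ast(e^{it})|\,dt=\lim_{r\to1}M(r,\Phi')$ by assertion $2)$. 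The only genuinely hard ingredients in this plan are the two classical theorems used as black boxes: the almost-everywhere existence of angular limits for $H^1$ (equivalently $H^2$) functions, which underlies the whole of assertion $2)$, and the F.\ and M.\ Riesz absolute-continuity theorem, which is the crux of the necessity part of $1)$; we would borrow these from \cite{Duren,P} rather than reprove them.
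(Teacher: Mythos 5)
Your argument is correct and is precisely the classical proof of these facts (Theorems 2.6 and 3.12 of \cite{Duren}): the factorization $f=gh$ with $g,h\in H^2$ via a Blaschke product for assertion $2)$, and Carath\'eodory plus the F.\ and M.\ Riesz theorem for the necessity in $1)$, with the arc-integration argument for the converse. The paper does not reprove the theorem but simply cites those two results from \cite{Duren}, so your sketch supplies exactly the standard argument behind the references the paper relies on.
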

The first result is discussed in Theorem $3.12 $  of \cite{Duren}. As to the second one, we refer  the reader to Theorem 2.6 of the same reference.
An immediate  consequence of  Theorem \ref{Hp} reads as follows.
\begin{Coro}\label{corSZ}
Let $\Phi$ be a conformal mapping of the unit ball $\mathbb{D}$ onto the interior of a rectifiable Jordan curve $\partial\Omega$. Then $\Phi^\prime$ has an angular limit almost everywhere on  the boundary $\mathbb{T}$ and 
$$
\lim_{r\to 1}\int_{0}^{2p}|\Phi^\prime(r e^{ i\theta})|d\theta=\int_{0}^{2p}|\Phi^\prime( e^{ i\theta})|d\theta,\quad\hbox{and}\quad \lim_{r\to 1}\int_{0}^{2p}|\Phi^\prime(r e^{ i\theta})-\Phi^\prime(e^{i\theta})|d\theta=0.
$$
\end{Coro}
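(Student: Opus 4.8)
The plan is simply to chain together the two parts of Theorem \ref{Hp}. First, since $\partial\Omega$ is by hypothesis a rectifiable Jordan curve and $\Phi:\mathbb{D}\to\Omega$ is the associated conformal map, part $\hbox{1})$ of Theorem \ref{Hp} applies directly and yields $\Phi^\prime\in H^1$, together with the length formula $L=\lim_{r\to1}\int_0^{2\pi}|\Phi^\prime(re^{i\theta})|\,d\theta$. Thus the rectifiability hypothesis is used only at this point, to place $\Phi^\prime$ in the Hardy class $H^1$.

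Next I would feed $f=\Phi^\prime$ into part $\hbox{2})$ of Theorem \ref{Hp}. Since $\Phi^\prime\in H^1$, that statement guarantees that $\Phi^\prime$ admits an angular limit $\Phi^\prime(e^{i\theta})$ for almost every $\theta\in[0,2\pi)$, and moreover that
$$\lim_{r\to1}\int_0^{2\pi}|\Phi^\prime(re^{i\theta})|\,d\theta=\int_0^{2\pi}|\Phi^\prime(e^{i\theta})|\,d\theta,\qquad \lim_{r\to1}\int_0^{2\pi}|\Phi^\prime(re^{i\theta})-\Phi^\prime(e^{i\theta})|\,d\theta=0.$$
These are precisely the two assertions of the corollary, so the argument is complete.

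There is no genuine obstacle here, since the corollary is a formal consequence of the theorem; the only point worth flagging is that the two inputs must be matched correctly — rectifiability of $\partial\Omega$ is exactly the hypothesis that converts into $\Phi^\prime\in H^1$ via part $\hbox{1})$, and membership in $H^1$ is exactly the hypothesis required for the boundary-value and $L^1$-convergence conclusions of part $\hbox{2})$. One may additionally recall Carath\'eodory's theorem, which ensures that $\Phi$ itself extends to a continuous injection of $\overline{\mathbb{D}}$ and hence furnishes a genuine parametrization $\theta\mapsto\Phi(e^{i\theta})$ of $\partial\Omega$; this is not needed for the statement as phrased, but it is what makes the corollary usable later when $\partial\Omega$ is parametrized by arc length in the proof of Theorem \ref{The47}.
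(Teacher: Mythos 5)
Your argument is correct and is exactly the paper's intended reasoning: the corollary is stated there as an immediate consequence of Theorem \ref{Hp}, obtained by using part $1)$ to convert rectifiability of $\partial\Omega$ into $\Phi^\prime\in H^1$ and then applying part $2)$ with $f=\Phi^\prime$. Nothing further is needed.
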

\subsection{Potential characterization of the balls}
There are many results emerging from potential theory with the basic goal to characterize the balls of the Euclidian space $\RR^n.$ One of them uses the Newtonian potential defined for a domain $\Omega\subset \RR^2$ by
\begin{equation}\label{logp}
\varphi(x)=\frac{1}{2p}\int_{\Omega}\log\frac{1}{|x-y|}dy.
\end{equation}
In the vocabulary of fluid dynamics this is the stream function of the vorticity $\chi_{\Omega}$. When the domain coincides with a ball then $\varphi$ is constant on the boundary. The converse is proved by  Fraenkel  \cite{Fran}, see Theorem 1.1 page $18,$ that we recall here.

\begin{Theo}\label{fran}
Let $\Omega$ be  a bounded domain set of  $\RR^2$  and $\varphi$ its Newtonian potential. If $\varphi$ is constant on the boundary of $\Omega$, then $\Omega$ must be a ball.
\end{Theo}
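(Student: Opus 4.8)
\emph{Proof proposal.} The natural route is to recast the statement as an overdetermined elliptic problem and then symmetrise. Since $\frac{1}{2\pi}\log\frac{1}{|x|}$ is the fundamental solution of $-\Delta$ on $\RR^2$, the potential in \eqref{logp} satisfies $-\Delta\varphi=\chi_\Omega$ in $\mathcal D'(\RR^2)$; hence $\varphi\in C^{1,\alpha}_{\textnormal{loc}}$ for every $\alpha\in(0,1)$, $\Delta\varphi=-1$ in $\Omega$, $\varphi$ is harmonic in $\RR^2\setminus\overline\Omega$, and $\varphi(x)=-\frac{|\Omega|}{2\pi}\log|x|+O(1)$ at infinity. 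Normalising $\lambda$ to be the constant boundary value of $\varphi$ and setting $\psi=\varphi-\lambda$, the strong minimum principle gives $\psi>0$ in $\Omega$ (superharmonic, vanishing on $\partial\Omega$), while the maximum principle gives $\psi<0$ in the unbounded component $G$ of $\RR^2\setminus\overline\Omega$ ($\psi$ harmonic, vanishing on $\partial G$, tending to $-\infty$ at infinity); a bounded complementary component would force $\psi\equiv0$ there, contradicting the real-analyticity of $\varphi$ off $\partial\Omega$. Thus the complement is connected and $\partial\Omega=\{\psi=0\}$ is a clean, sign-changing level set.

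The next step is to upgrade the regularity of $\partial\Omega$, since the statement assumes none. The sign-changing level-set structure together with the transmission problem solved by $\psi$ (interior equation $\Delta\psi=-1$, exterior harmonicity, matching Cauchy data across $\partial\Omega$ because $\varphi\in C^1$) forces, by standard elliptic and free-boundary regularity, that $\nabla\varphi\neq0$ on $\partial\Omega$ and that $\partial\Omega$ is real-analytic; when $\partial\Omega$ is already $C^1$ (or merely rectifiable, after a preliminary potential-theoretic argument — exactly the extra labour isolated in the rectifiable case of Theorem~\ref{The47}) this bootstrap is routine.

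The heart of the matter is the symmetrisation, and it is also the main obstacle. Unlike Serrin's classical overdetermined problem, one is \emph{not} given $|\nabla\varphi|=\textnormal{const}$ on $\partial\Omega$; in two dimensions that condition is in fact equivalent to the conclusion, since $|\nabla\varphi|=\frac{|\Omega|}{2\pi|\Phi'|}$ on $\partial\Omega$ for the exterior conformal map $\Phi$. So the argument must genuinely exploit the coupling between the interior equation and the exterior harmonicity through the $C^1$ matching. I would run the method of moving planes (Alexandrov reflection) on the global potential $\varphi$ on $\RR^2$: for each direction $e$, compare $\varphi$ with its reflection across the moving hyperplanes $\{x\cdot e=\mu\}$ on the reflected caps, using the sign information for $\psi$, the asymptotic radial symmetry of $\varphi$ at infinity ($\varphi(x)+\frac{|\Omega|}{2\pi}\log|x|$ is bounded and harmonic near $\infty$), and the absence of a jump in $\nabla\varphi$ across $\partial\Omega$ in place of the missing Neumann datum; sliding $\mu$ until the comparison first degenerates yields, via the Hopf lemma and Serrin's corner lemma, a hyperplane of symmetry of $\Omega$ orthogonal to $e$, and letting $e$ range over all directions forces $\Omega$ to be a ball. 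In $2$D an alternative consonant with the conformal-mapping toolkit of Section~\ref{conf-m} is to observe that $\partial_z\varphi$ equals $-\frac{1}{4\pi}$ times the Cauchy transform of $\chi_\Omega$, so that $\varphi\equiv\lambda$ on $\partial\Omega$ (i.e. $\textnormal{Re}(\partial_z\varphi\,dz)=0$ along $\partial\Omega$), combined with the Cauchy-transform structure, forces the Schwarz function of $\partial\Omega$ to continue meromorphically into $\Omega$ — a simply connected quadrature domain — whereupon the logarithmic growth of $\varphi$ rules out extra nodes and $\Omega$ must be a disc. The delicate point either way is precisely the coupling: in the moving-plane proof one must reflect on the caps, where the unknown source $\chi_\Omega$ itself behaves well, rather than on a fixed half-space; in the complex-analytic proof one must show that the meromorphic continuation of the Schwarz function, produced a priori only near $\partial\Omega$, actually extends to all of $\Omega$. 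This is the content of Fraenkel's theorem, for whose complete proof we refer to \cite{Fran}.
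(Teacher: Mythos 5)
The paper does not actually prove this statement: Theorem \ref{fran} is quoted verbatim from Fraenkel's book (\cite{Fran}, Theorem 1.1, p.~18), and the text offers nothing beyond the citation. Your proposal ultimately does the same — it closes by referring the complete proof to \cite{Fran} — so in substance the two coincide. Your surrounding sketch is sound where it is elementary (the sign of $\psi=\varphi-\lambda$ on either side of $\partial\Omega$, the logarithmic asymptotics at infinity, the recasting as an overdetermined transmission problem), and the moving-plane route you outline is indeed the one Fraenkel follows. One caveat if the sketch is meant to carry weight: the intermediate step in which you ``upgrade'' $\partial\Omega$ to a real-analytic curve with $\nabla\varphi\neq 0$ by ``standard elliptic and free-boundary regularity'' is neither routine nor needed. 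The hypothesis supplies no Neumann datum and no a priori boundary regularity, so there is no off-the-shelf free-boundary theorem to invoke (and Hopf-type nondegeneracy fails without an interior ball condition); Fraenkel's actual argument runs the reflection comparison directly on the integral representation \eqref{logp} of the potential and never requires $\partial\Omega$ to be smooth — which is precisely why the theorem can be stated, as here, for an arbitrary bounded domain. As a citation of Fraenkel your proposal is fine and matches the paper; as a self-contained proof, that regularity step would be the genuine gap.
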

The result of Fraenkel is not specific to the two dimensions  but can be extended for higher dimensions. Moreover it is worth pointing out that this theorem does not require any  assumption on the regularity of the boundary. Recently a partial extension of this result was  accomplished by Reichel  in \cite{Rei}.
\subsection{Proof of Theorem \ref{The47}}  We intend to give the proof concerning the stationary patches.

\begin{proof}
$(\bf{1})$ This can be deduced from the following fact which is related to the special structure of the inviscid MHD equations \eqref{MHD}: if $b_0=\pm v_0$ then we can easily check that  this  corresponds to a stationary solution without pressure. This allows to get the desired result.

$(\bf{2})$ This proof   is more tricky and  founded  on   Theorem \ref{fran}.  For the sake of clear presentation we shall distinguish  smooth boundaries from the rough ones. We mean by {\it  smooth } a $C^1$ Jordan curve and by {\it  rough}   a rectifiable Jordan curve. As we shall see the basic difference between these cases appears when we deal with  the flux across the boundary.  For smooth boundaries this can be done by  using Gauss-Green formula.  However for the rough boundaries more sophisticated analysis will be required. To answer to this problem there are at least two approaches that one could consider. The first one is to use a general version  of Gauss-Green formula coming from the geometric measure theory. The disadvantage  of this  formula is that it not so explicit to allow  exploitable computations.  The second one that will be developed  here is to use the conformal mappings. Therefore   the problem reduces to measuring the flux across the unit sphere for a modified vector field and by this way we transform the problem into the regularity of the conformal mapping close to the boundary. This has been discussed previously in Section \ref{conf-m}.  

\vspace{0,2cm}

$\bullet$ {\bf Smooth curves.} We assume that the curves  $\partial\Omega$ and $\partial D$ are of class  $C^1$, then we may use the  classical result concerning the derivative in the distribution sense of the characteristic function  $\chi_D,$

\begin{equation}\label{deriv}
\nabla{\chi_{D}}=-\vec{n} \,d\sigma_{\partial D},
\end{equation}
where $d\sigma_{\partial D}$ is the arc-length measure on $\partial D$ and
$\vec{n}$ the outward unit normal. 
Accordingly the first equation of \eqref{E} can be written for the stationary patches in  the form
$$
(v_0\cdot\vec n)d\sigma_{\partial \Omega}=(b_0\cdot\vec n)d\sigma_{\partial D}.
$$
Since the boundaries are disjoint then the involved measures are disjointly supported and thus,
\begin{equation}\label{ah12}
v_0\cdot\vec n=0\quad\hbox{on }\quad \partial\Omega\qquad\hbox{and}\qquad b_0\cdot\vec n=0\quad\hbox{on }\quad \partial D.
\end{equation}
Denote by $\varphi_0$ and $\psi_0$ the stream functions of $v_0$ and $b_0$, respectively. They satisfy the elliptic equations
$$
\Delta\varphi_0=\chi_{\Omega},\quad \Delta\psi_0=\chi_{D}.
$$
Now since $v_0=\nabla^\perp\varphi_0$ and $b_0=\nabla^\perp \psi_0$ we deduce in view of Proposition \ref{prop754} that the stream functions $\varphi_0$ and $\psi_0$ are constant on the boundaries $\partial\Omega$ and $  \partial D,$ respectively. At this stage we can use Fraenkel's theorem and conclude  that $\Omega$ and $D$ are balls. It remains to show that these  balls are concentric. For this goal we will use the second equation of \eqref{E}. Thus performing similar calculations we get in the weak sense,
$$
(v_0\cdot\vec n)d\sigma_{\partial D}=(b_0\cdot\vec n)d\sigma_{\partial \Omega}-\big\{2\partial_1b_0\cdot\nabla v_0^2-2\partial_2b_0\cdot\nabla v_0^1\big\}.
$$
As the functions $\nabla v_0, \nabla b_0$ belong to $ L^p$ for any finite $p\in (1,\infty)$, the last term appearing  between the braces is a function and consequently the preceding equation is equivalent to the conditions 
\begin{equation}\label{ae}
v_0\cdot\vec n=0\quad\hbox{on }\quad \partial D;\quad b_0\cdot\vec n=0\quad\hbox{on }\quad \partial\Omega
\end{equation}
and 
\begin{equation}\label{aew}
\partial_1b_0\cdot\nabla v_0^2-\partial_2b_0\cdot\nabla v_0^1=0,\quad \hbox{a.e.}
\end{equation}
 Consequently we deduce by  Proposition \ref{prop754} that the  curve $\partial D$ is a streamline for $\varphi_0$ and $\partial\Omega$ is a streamline for $\psi_0.$ Without loss of generality one can assume that the ball $\Omega$ is centered at the origin and with \mbox{radius $r$.}
It is known that  in this case the stream function $\varphi_0$ has  the form
\begin{equation*}
 \varphi_0(x)=\left\{ 
\begin{array}{ll} 
\frac14|x|^2-\frac{r^2}{2}\big(\log\frac1r+ \frac12\big),\quad  |x|\le r,\\
\vspace{0,1cm}

\frac{r^2}{2}\ln{|x|},\quad |x|\geq r.
\end{array} \right.    
     \end{equation*}
From which we deduce that the streamlines of $\varphi_0$ are concentric circles and this implies in turn  that $\partial D$ has the same center as $\partial \Omega.$ We can also deduce that 
$$
v_0(x)=f(|x|) x^\perp\quad \hbox{and}\quad b_0(x)=g(|x|) x^\perp
$$
for  two known Lipschitz  functions $f$ and $g$. To check the equation \eqref{aew} we shall write it in the weak sense and use the foregoing structure for $v_0$ and $b_0,$ 
\begin{eqnarray*}
\partial_1b_0\cdot\nabla v_0^2-\partial_2b_0\cdot\nabla v_0^1&=&\textnormal{div}\big(v_0^2 \partial_1b_0-v_0^1\partial_2b_0 \big)\\
&=&\textnormal{div}\big(r f(r)\partial_r(g(r) x^\perp)\big)\\
&=&\textnormal{div}\big( f(r)(r\partial_rg+g) x^\perp\big)\\
&=&0.
\end{eqnarray*}

This concludes the proof in the case of disjoint $C^1$ boundaries. 
%Now when the boundaries intersect according to a discrete set $\Sigma$ then the previous reasoning can be developed and we find instead of \eqref{ah12}
%$$
%v_0\cdot\vec n=0\quad\hbox{on }\quad \partial \Omega\backslash\Sigma;\quad b_0\cdot\vec n=0\quad\hbox{on }\quad \partial D\backslash\Sigma.
%$$
%Since $v_0$ and $b_0$ are continuous everywhere then we shall get the above identities everywhere on the corresponding boundaries. Consequently $\Omega$ and $D$ should be touching balls and thus $\Sigma$ contains at most two points. The rest of the proof is the same as the first case. 

\vspace{0,2cm}

$\bullet$ {\bf Rough curves.}
We start with writing in the distribution sense the equations of the stationary patches according to the equations \eqref{E}. The first one reads as follows
$$
\textnormal{div}(v\,\chi_{\Omega})=\textnormal{div}(b\,\chi_{D}).
$$
We can readily  check in view of the incompressibility condition that the supports of these distributions satisfy
$$
\hbox{supp}(\textnormal{div}(v\,\chi_{\Omega}))\subset \partial\Omega\quad\hbox{and}\quad \hbox{supp}(\textnormal{div}(v\,\chi_{D}))\subset \partial D.
$$
Since $\partial D\cap \partial \Omega=\varnothing$ we get
$$
\textnormal{div}(v\,\chi_{\Omega})=\textnormal{div}(b\,\chi_{D})=0.
$$
This means that for any $\psi\in \mathcal{D}(\RR^2),$ we have
$$
\int_{\Omega}\textnormal{div}(v\,\psi)dx=\int_{D}\textnormal{div}(b\psi)dx=0.
$$
The next goal is to deduce from these equations that the Newtonian potential $\varphi_0$ and $\psi_0$ introduced in the previous case are constant on the corresponding boundaries. With this in hand we can conclude by using Fraenkel's result and deduce that the boundaries are necessary circles. Afterwards we shall check by using similar arguments as previously  that their centers must agree. As we have mentioned before the major difficulty concerns the   use of  Gauss-Green formula for rough boundaries. Our approach is based on the use   the conformal mapping combined with an approximation procedure. 
Let $\mathbb{D}$ denote the unit open ball and  $\Phi: \mathbb{D}\to \Omega$ be  a conformal mapping. Since $\partial \Omega$ is a Jordan curve   then by  Carath\'eodory theorem $\Phi$ has a continuous extension to $\overline{\mathbb{D}}$ and  maps the unit circle $\mathbb{T}$ one-to-one onto $\partial \Omega.$   Moreover, according to Theorem  \ref{Hp}  as the boundary is rectifiable Jordan curve  the derivative $\Phi^\prime$ exists for  almost all $\xi\in \mathbb{T}$ and 
$$
\Phi^\prime\in L^1(\mathbb{T}).
$$
First recall  the  Cauchy-Riemann equations for the holomorphic function  $\Phi=\Phi_1+i\Phi_2$ inside $\mathbb{D}$ 
$$
\partial_1\Phi_1=\partial_2\Phi_2,\quad \partial_2\Phi_1=-\partial_1\Phi_2.
$$
Set $F\triangleq v\,\psi$, then we have the equation
\begin{equation}\label{eq7d}
\int_{\Omega}\textnormal{div}\,F\,\, dx=0.
\end{equation}
Observe that from Biot-Savart law the velocity enjoys the following regularities
$$
v\in L^\infty,\,\nabla v\in L^p, \,\forall p\in ]1,\infty[.
$$
 For each $0<r<1$, let us denote by  $\mathbb{D}_r$   the open  ball  of radius $r$ and centered at the origin and set 
 $$\Omega_r\triangleq\Phi(\mathbb{D}_r).
 $$ 
The boundary $\partial\Omega_r$ is an analytic curve and  $\Phi$ maps conformally $\mathbb{D}_r$ onto $\Omega_r.$    By the change of variable $x=\Phi(y)$ we obtain
\begin{equation}\label{hmi650}
\int_{\Omega_r}\textnormal{div}\,F\,\, dx=\int_{\mathbb{D}_r}(\textnormal{div}\,F)(\Phi(y))\,\, |J_\Phi(y)|dy.
\end{equation}
Using  Cauchy-Riemann equations we obtain the following formula for  the Jacobian $|J_\Phi(y)|$,
$$
|J_\Phi(y)|=(\partial_1\Phi_1(y))^2+(\partial_2\Phi_1(y))^2,\quad \forall \, y\in \mathbb{D}.
$$
Now we claim that,
\begin{eqnarray}\label{eq907}
(\textnormal{div}\,F)(\Phi(y))\,\, |J_\Phi(y)|&=&\partial_1\Phi_1(\textnormal{div}\,(F(\Phi(y))+\partial_1\Phi_2\,\hbox{curl}(F(\Phi(y))\\
\nonumber&=&\partial_2\Phi_2\partial_1(F_1(\Phi(y))-\partial_1\Phi_2\partial_2(F_1(\Phi(y))\\
\nonumber&+& \partial_1\Phi_1\partial_2(F_2(\Phi(y))-\partial_2\Phi_1\partial_1(F_2(\Phi(y)).
\end{eqnarray}
Indeed, easy computations yield

\begin{equation*}
\left\{ 
\begin{array}{ll} 
\partial_1(F_j(\Phi))=(\partial_1F_j)(\Phi)\partial_1\Phi_1+(\partial_2F_j)(\Phi)\partial_1\Phi_2\\
\vspace{0,1cm}

\partial_2(F_j(\Phi))=(\partial_1F_j)(\Phi)\partial_2\Phi_1+(\partial_2F_j)(\Phi)\partial_2\Phi_2.
\end{array} \right.    
     \end{equation*}
Using Cauchy-Riemann equations one finds
\begin{equation*}
\left\{ 
\begin{array}{ll} 
(\partial_1F_j)(\Phi)|J_\Phi|=\partial_1\Phi_1\partial_1(F_j(\Phi))-\partial_1\Phi_2\partial_2(F_j(\Phi))\\
\vspace{0,1cm}

(\partial_2F_j)(\Phi)|J_\Phi|=\partial_2\Phi_2\partial_2(F_j(\Phi))-\partial_2\Phi_1\partial_1(F_j(\Phi)).\end{array} \right.    
     \end{equation*}
     This gives the identity \eqref{eq907}. Combining \eqref{hmi650} and \eqref{eq907} with Gauss-Green formula we get
     \begin{eqnarray*}
     \int_{\Omega_r}\textnormal{div}\,F\,\, dx&=&
    r \int_{0}^{2p}\Big((\partial_2\Phi_2)(r e^{i\theta})\cos\theta-(\partial_1\Phi_2)(r e^{i\theta})\sin\theta\Big)F_1(\Phi(re^{i\theta}))d\theta\\
       &+&r\int_{0}^{2p}\Big((\partial_1\Phi_1)(re^{i\theta})\sin\theta-(\partial_2\Phi_1)(re^{i\theta})\cos\theta\Big)F_2(\Phi(re^{i\theta}))d\theta.
     \end{eqnarray*}
    
     Recall that 
     $$
     F=v\psi=\psi\,\nabla^\perp \varphi_0 
     $$
     and thus with the notation $\zeta=e^{i\theta}$ we get
      \begin{eqnarray*}
     \int_{\Omega_r}\textnormal{div}\,F\,\, dx
     &=&-r\int_{0}^{2p}\Big((\partial_2\Phi_2)(r \zeta)\cos\theta-(\partial_1\Phi_2)(r \zeta)\sin\theta\Big)(\partial_2\varphi_0)(\Phi(r\zeta))\psi(\Phi(r\zeta))d\theta\\
       &+&r\int_{0}^{2p}\Big((\partial_1\Phi_1)(r \zeta)\sin\theta-(\partial_2\Phi_1)(r \zeta)\cos\theta\Big)(\partial_1\varphi_0)(\Phi(r\zeta))\psi(\Phi(r\zeta))d\theta\\
       &=&-r\int_{0}^{2p}\Big((\partial_2\Phi_1)(r \zeta)(\partial_1\varphi_0)(\Phi( r\zeta))+(\partial_2\Phi_2)(r \zeta)(\partial_2\varphi_0)(\Phi(r\zeta))\Big)\cos\theta\,\psi(\Phi(r\zeta))d\theta\\
       &+&r\int_{0}^{2p}\Big((\partial_1\Phi_1)(r \zeta)(\partial_1\varphi_0)(\Phi(r\zeta))+(\partial_1\Phi_2)(r \zeta)(\partial_2\varphi_0)(\Phi(r\zeta))\Big)\sin\theta\,\psi(\Phi(r\zeta))d\theta \\
       &=&-\int_{0}^{2p}\frac{d}{d\theta}\big\{\varphi_0(\Phi(re^{i\theta}))\big\}\psi(\Phi(re^{i\theta}))d\theta.
     \end{eqnarray*}
 We have used the notation,
 $$
 \varphi_0(\Phi(re^{i\theta}))=\varphi_0\big(Q_1(r\cos\theta, r\sin\theta),Q_2(r\cos\theta, r\sin\theta)  \big).
 $$To pass to the limit in the left-hand side when $r$ approaches $1$ we use  that 
 $$\textnormal{div F}=v\cdot\nabla \psi\in L^\infty
 $$ combined with the fact that the area of $\Omega_r$ converges to the area of $\Omega.$ This latter claim follows from the formula
 $$
 |\Omega\backslash\Omega_r|=\int_{\mathbb{D}\backslash\mathbb{D}_r}|\Phi^\prime(z)|dA(z).
 $$
Consequently
\begin{eqnarray}\label{eq7d}
 \nonumber \lim_{r\to 1}\int_{\Omega_r}\textnormal{div}\,F\,\, dx&=&\int_{\Omega}\textnormal{div} F\, dx\\
 &=&0.
 \end{eqnarray}

Concerning the passage to the limit in the right-hand side we use  $\nabla\varphi_0\in \mathcal{C}_b(\RR^2)$ combined with the following result discussed before in the preliminaries,
 $$
 \lim_{r\to 1}\int_{0}^{2p}|\nabla\Phi(re^{i\theta})-\nabla\Phi(e^{i\theta})|d\theta=0.
 $$  
    Therefore we obtain from the preceding identity combined with  \eqref{eq7d}
     \begin{equation}\label{him1}
     \int_{0}^{2p}\frac{d}{d\theta}\big\{\varphi_0(\Phi(e^{i\theta}))\big\}\psi(\Phi(e^{i\theta}))d\theta=0,\quad\forall\,\psi\in \mathcal{D}(\RR^2).
     \end{equation}
     Let $h:\mathbb{T}\to\CC$ be any  continuous function on the circle. From Carath\'eodory Theorem  we can extend $\Phi$ to the closure  $\overline{\mathbb{D}}$ and $\Phi:\overline{\mathbb{D}}\to\overline\Omega$ is a homeomorphism. Consequently   the inverse $\Phi^{-1}:\partial\Omega\to {\mathbb{T}}$ is continuous and we can define the function $\psi: \partial\Omega\to \CC$  by
     $$
     \psi(z)=h(\Phi^{-1}(z),\quad z\in \partial\Omega.
     $$
     The function $\psi$ is continuous over $\partial \Omega$ and has an extension belonging to $\mathcal{C}^\infty_c(\RR^2).$ Therefore we deduce from \eqref{him1} that for any $h\in \mathcal{C}(\mathbb{T}; \CC)$,
     $$
          \int_0^{2p}\frac{d}{d\theta}\big\{\varphi_0(\Phi(e^{i\theta}))\big\}h(e^{i\theta})d\theta=0.
     $$
     This allows to conclude that 
     $$
     \frac{d}{d\theta}\big\{\varphi_0(\Phi(e^{i\theta}))\big\}=0,\quad\hbox{a.e.}
     $$
     As $\theta\mapsto \frac{d}{d\theta}\big\{\varphi_0(\Phi(e^{i\theta}))\big\}$ is absolutely continuous then we can use  Taylor formula which  implies the existence of a constant $\lambda$ such  that
     $$
     \varphi_0(\Phi(e^{i\theta}))=\lambda, \quad\forall \theta\in [0,2p].
     $$
     This means that 
     $$
     \varphi_0(z)=\lambda,\quad\forall z\in \partial \Omega.
     $$
     At this stage we can use Fraenkel's result to conclude that the domain $\Omega$ should be a ball. The same proof shows that $D$ is also a ball and to  check that the balls have the same center we follow the same computations as for the smooth boundaries.
    
\end{proof}

     \section{Generalized vortex patches}
     
     In this section, we shall extend the conclusion of  Theorem \ref{thm1} to more general initial data belonging to the Yudovich class. 
     
     \subsection{General statement}
     
 Before stating our result we shall recall  some definitions   that  were briefly introduced in Section \ref{Sec2}. For a continuous vector field $X:\RR^2\to\RR^2$ and  $\delta\geq0$ we denote by 
$$\mathcal{Z}_X^\delta\triangleq\big{\{}x\in \RR^2,\quad |X(x)|\le \delta\big{\}}.
$$
Let $\varepsilon\in]0,1[$ and $f,g:\RR^2\to\RR$ be  two functions. We define the $C^\EE$ singular support of the couple $(f,g)$ by $$
\Sigma_{\textnormal{sing}}^\EE(f,g)\triangleq \Sigma_{\textnormal{sing}}^\EE(f)\cup \Sigma_{\textnormal{sing}}^\EE(g),
$$
where the  singular support of a single function was given in Definition \ref{singsuppp}.
 We may also  recall the push-forward  of  a  vector field $X_0$ by the flow map associated to another time dependent vector field $v$,  as the  solution of  the transport equation 
 $$
 D_t X=X\cdot\nabla v,\quad X(0)=X_0.
 $$
 Finally recall that the anisotropic space $W^{p}_X$ was introduced in Definition \ref{Defin79}.
%Some general properties were discussed previously in Section \ref{Sec2}.
Our main result reads as follows.
\begin{Theo}\label{thm2}
Let $p\in]2,\infty[$, $\varphi_0:\RR^2\to \RR$ be an element of    $ W^{2,\infty}$   and  $G:\RR\to \RR$ be a function such that  $G^\prime\in W^{2,\infty}$   and $\inf_{\RR}|G^\prime|>0$. Take $X_0=\nabla^\perp\varphi_0$ and  $b_0=\nabla^\perp \{G(\varphi_0)\}, $ and assume that the initial data $v_0$ and $b_0$ satisfy
\begin{enumerate}
\item  The vorticity $\omega_0$  and the  current density $j_0$ belong to $W^p_{X_0}.$
\item There exists a smooth compactly supported function $\rho$ such that $(1-\rho)\omega_0,(1-\rho) j_0\in C^{1-\frac2p}.$ 
\item The singular set  $\Sigma_{\textnormal{sing}}^{1-\frac2p}(\omega_0,j_0)$ is compact and there exists $\delta>0$ such that  
\begin{equation}\label{separa}
\textnormal{dist} \Big(\varphi_0(\mathcal{Z}_{X_0}^\delta),\varphi_0\big(\Sigma_{\textnormal{sing}}^{1-\frac2p}(\omega_0,j_0)\big)\Big)>0.
\end{equation}
\end{enumerate} 
Then there exists $T>0$ and a unique solution $(v,b)$ for the MHD equations with 
$$\forall t\in[0,T],\quad\omega(t),j(t)\in W^p _{X(t)}\quad\hbox{and}\quad X, v,b\in L^\infty([0,T]; \textnormal{Lip}).
$$
Moreover, let $\psi$ be the flow of $v$, then 
$$
\partial_{X_0}\psi(t)\in L^\infty([0,T]; \textnormal{Lip}).
$$
\end{Theo}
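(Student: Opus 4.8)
The plan is to run Chemin's vortex-patch scheme in the anisotropic Sobolev scale $W^p_X$ of Definition~\ref{Defin79}, the two genuinely new difficulties being the nonlocal quadratic term $H(v,b)$ in \eqref{E} and the possible degeneracy of the field $X$ on its zero set. First, the algebraic reduction. Since $b_0=\nabla^\perp\{G(\varphi_0)\}=G'(\varphi_0)\,\nabla^\perp\varphi_0=G'(\varphi_0)\,X_0$, both $X_0$ and $b_0$ are Hamiltonian and $[X_0,b_0]=0$ (indeed $\partial_{X_0}\varphi_0=0$, so the Lie bracket vanishes). Let $X(t)$ be the push-forward of $X_0$ by the flow $\psi$ of $v$. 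By Lemma~\ref{lempro}, $X(t)=\nabla^\perp\varphi(t)$ with $D_t\varphi=0$, hence $b(t)=\nabla^\perp\{G(\varphi(t))\}=G'(\varphi(t))\,X(t)$ for all $t$, and by Lemma~\ref{com27}, $[X(t),b(t)]=0$. Consequently the terms carrying $\partial_{\partial_X b-\partial_b X}$ drop out of \eqref{E0X}, and $(\partial_X\omega,\partial_X j)$ solves the closed coupled system
$$
D_t\,\partial_X\omega=b\cdot\nabla\,\partial_X j,\qquad D_t\,\partial_X j=b\cdot\nabla\,\partial_X\omega+2\,\partial_X H(v,b),
$$
to which the $L^p$ estimates of Proposition~\ref{transport} apply.

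Next, the scheme and functional setting. I would regularise the data, solve the resulting smooth problem (classical well-posedness being available in $H^s$), derive estimates uniform in the regularisation on some interval $[0,T]$ with $T$ depending only on the size of the data, and pass to the limit. The quantity to propagate is
$$
N(t)\triangleq \|(\omega,j)(t)\|_{L^1\cap L^\infty}+\|(\partial_X\omega,\partial_X j)(t)\|_{L^p}+\|X(t)\|_{\textnormal{Lip}}+\big\|(1-\rho_t)(\omega,j)(t)\big\|_{C^{1-\frac2p}},
$$
where $\rho_t$ is chosen \emph{as a function of $\varphi(t)$} — hence constant along the streamlines of $X(t)$ — equal to $1$ near $\psi_t\big(\Sigma^{1-\frac2p}_{\textnormal{sing}}(\omega_0,j_0)\big)$ and supported in a slightly larger streamline neighbourhood. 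This choice is what makes the argument close: $\partial_{X(t)}\rho_t=0$, so the cut-off is invisible to the anisotropic derivative and, since $b=G'(\varphi)X$, it commutes with $b\cdot\nabla$; and the separation \eqref{separa} is preserved in time because $\varphi$ is transported, so $\varphi(t)$ takes the same values on $\mathcal{Z}_{X(t)}^{\delta}$ and on $\psi_t(\Sigma^{1-\frac2p}_{\textnormal{sing}})$ as $\varphi_0$ did initially; hence for $t$ small $\supp\rho_t$ stays at positive distance from $\mathcal{Z}_{X(t)}^{\delta/2}$.

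Then the a priori estimates. The heart is the Lipschitz bound $\|\nabla v\|_{L^\infty}+\|\nabla b\|_{L^\infty}\lesssim C(N(t))$, obtained by splitting $\RR^2=\{|X(t)|\ge\delta/2\}\cup\mathcal{Z}_{X(t)}^{\delta/2}$. On the first region $|X|^{-2}$ is bounded, so \eqref{vort}--\eqref{vorts1} reduce the bound to controlling $\partial_X v,\partial_X b$ in $L^\infty$, which follows from $\partial_X\omega,\partial_X j\in L^p$ (these satisfy elliptic equations with right-hand side essentially $\nabla^\perp\partial_X\omega$ modulo commutators with $\nabla X$) and the embedding $W^{1,p}\hookrightarrow L^\infty$, valid because $p>2$. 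On $\mathcal{Z}_{X(t)}^{\delta/2}$ one has $\rho_t\equiv 0$, so there $\omega,j$ coincide with $(1-\rho_t)(\omega,j)\in C^{1-\frac2p}$ and local elliptic regularity for Biot--Savart bounds $\nabla v,\nabla b$. With this in hand, Proposition~\ref{a priori} controls $\|(\omega,j)\|_{L^1\cap L^\infty}$; Proposition~\ref{l555} applied to the transported equations for $(1-\rho_t)(\omega,j)$ controls the H\"older piece (the source $b\cdot\nabla j=G'(\varphi)\,\partial_X j$ being compatible with the streamline cut-off); Proposition~\ref{transport} applied to the system above controls $\|(\partial_X\omega,\partial_X j)\|_{L^p}$, the source $\partial_X H(v,b)$ being expanded via the identity \eqref{Iden1} into products of $\partial_X v,\partial_X b,\omega,j$ and powers of $X$ and $|X|^{-2}$ — all controlled on $\{|X|\ge\delta\}$, with the remaining rough contributions, localised near $\psi_t(\Sigma^{1-\frac2p}_{\textnormal{sing}})$, treated by the Calder\'on-commutator estimates of the last section; and $\|X(t)\|_{\textnormal{Lip}}$ is propagated from $D_tX=X\cdot\nabla v$, the source in the equation for $\nabla X$ being $\partial_X\nabla v$, which one keeps in $L^\infty$ using the identities \eqref{yasser} and the commuting structure. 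A continuity argument then shows that $N$ stays finite on some $[0,T]$ with $T=T(N(0))>0$.

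Finally, compactness yields a solution on $[0,T]$ with the stated regularity; uniqueness follows from an $L^2$ energy estimate on the difference of two solutions, closed by Gronwall once $v,b,X$ are Lipschitz; and the last assertion is immediate from $\partial_{X_0}\psi(t,\cdot)=X(t)\circ\psi(t,\cdot)$ together with the Lipschitz bounds on $X(t)$ and on $\psi(t,\cdot)$. \emph{The main obstacle} is precisely the Lipschitz estimate for $(v,b)$ and for $X$: one must reconcile the anisotropic $W^p_X$ control, which degenerates on $\mathcal{Z}_X$, with the plain H\"older control valid only near $\mathcal{Z}_X$, which forces the separation \eqref{separa} to be genuinely propagated under the flow and the truncation to be carried out along the streamlines of $X$; and keeping $\partial_X H(v,b)$ in $L^p$ and $\partial_X\nabla v$ in $L^\infty$ rests on the algebraic identities \eqref{Iden1} and \eqref{yasser} together with Calder\'on-commutator bounds — which is also why $W^{2,\infty}$-level data, rather than Chemin's $C^{1+\varepsilon}$, is needed, the space $C^{\varepsilon-1}\cap L^\infty$ not being an algebra.
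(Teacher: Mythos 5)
Your architecture coincides with the paper's in every major respect: the commutation $[X_0,b_0]=0$ coming from $b_0=G'(\varphi_0)X_0$, the resulting closed system for $(\partial_X\omega,\partial_X j)$, the cut-off built as a function of the transported Hamiltonian $\varphi(t)$ (so that $b\cdot\nabla$ annihilates it and the separation \eqref{separa} propagates in time), the splitting of $\nabla v$ into the region $\{|X|\gtrsim\delta\}$ handled by \eqref{vort}--\eqref{vorts1} and a neighbourhood of $\mathcal{Z}_X$ handled by the H\"older bound on the truncated data, the treatment of the quadratic term through the identity \eqref{Iden1} and Calder\'on commutators, and the regularisation, compactness and $L^2$ uniqueness steps.

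The one step that does not work as you wrote it is the propagation of $\|X(t)\|_{\textnormal{Lip}}$ ``from $D_tX=X\cdot\nabla v$, the source in the equation for $\nabla X$ being $\partial_X\nabla v$, which one keeps in $L^\infty$ using the identities \eqref{yasser}.'' Differentiating the transport equation does produce the source $\partial_X\nabla v$, but the only control available on it is in $L^p$: one writes $\partial_X\mathcal{R}_{ik}\omega=\mathcal{R}_{ik}(\partial_X\omega)-[\mathcal{R}_{ik},\partial_X]\omega$ with $\partial_X\omega\in L^p$ and the Calder\'on commutator bounded on $L^p$; neither term lands in $L^\infty$, since Riesz transforms are not bounded there and \eqref{yasser} only expresses $\nabla v$ itself, not $\partial_X\nabla v$, in terms of $\partial_X v$ and $\omega$ (and gives nothing on $\mathcal{Z}_X$). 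Worse, the Calder\'on commutator estimate you invoke to keep $\partial_X\nabla v$ in $L^p$ itself consumes $\|\nabla X\|_{L^\infty}$, so the argument as stated is circular. The paper closes this loop by a purely algebraic observation already contained in your own setup: since $b(t)=G'(\varphi(t))\,X(t)$ pointwise, with $\inf_\RR|G'|>0$ and $G''$ bounded, differentiating in $x$ gives $\|\nabla X(t)\|_{L^\infty}\lesssim\|\nabla b(t)\|_{L^\infty}+\|X(t)\|_{L^\infty}^2$, so the Lipschitz norm of $X$ is slaved to that of $b$ and never needs to be propagated through the transport equation. With this substitution your proof matches the paper's.
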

\begin{rema}\label{rem112}
 Let $\omega_0=\chi_{\Omega}$,   $j_0=\chi_{D}$ and $b_0$ the magnetic field associated to $j_0.$ Then following the proof of Theorem \ref{The47} we obtain that $\omega_0, j_0\in W^p_{b_0}$ if and only if $\Omega$ and $D$ are concentric discs.
\end{rema}
\begin{rema}
 Theorem \ref{thm2}  allows to work with more general vortices than the vortex patches: we can for example take an initial vorticity  of the form
 $$
 \omega_0(x)=f(x)\chi_{\Omega}
 $$
 with $f$ a smooth  compactly supported function and the magnetic field $b_0$ should  of course satisfy the assumption \eqref{separa}.
\end{rema}
As we shall see now this theorem allows to get the result of Theorem \ref{thm1}.
\subsection{Proof of Theorem \ref{thm1}}
We shall apply the preceding theorem with $\omega_0=\chi_\Omega$ and $G(x)=x$, meaning in this case  that $X_0=b_0.$ According to the assumption $j_0\in L^1\cap W^{1,p}$ and the embedding $W^{1,p}\hookrightarrow L^\infty$ we easily deduce   that $j_0\in W^p_{X_0}.$ The assumption  $\omega_0\in  W^p_{X_0}$ is equivalent to the vanishing of the normal component of the magnetic field: $b_0\cdot n=0$ on the boundary $\partial\Omega.$  Therefore it remains to check that the condition  \eqref{Equiv1} of Theorem \ref{thm1} implies the assumption \eqref{separa}. \\ From the embedding $W^{1,p}\hookrightarrow C^{1-\frac2p}$,  the $C^{1-\frac2p}$-singular support  of $j_0$ is empty and thus  the joint singular support  $\Sigma_{\textnormal{sing}}^\EE(\omega_0,j_0)$ coincides with $\partial\Omega.$    

Let $x\in \mathcal{Z}_{X_0}^\delta$ and $y\in\partial\Omega$. Since $|b_0(x)|\le\delta$ then from the condition $\eqref{Equiv1}$ of Theorem \ref{thm1} we obtain
$$
|\varphi_0(x)-\lambda|\geq\eta,
$$
with $\lambda$ the constant value of $\varphi$ on the   boundary $\partial\Omega$. It follows that 
$$
\forall x\in \mathcal{Z}_{X_0}^\delta, \forall y\in \partial\Omega,\quad |\varphi_0(x)-\varphi_0(y)|\geq\eta. 
$$
This gives the assumption \eqref{separa} and thus we can apply Theorem \ref{thm2} leading to the  first part of Theorem \ref{thm1}. Concerning the persistence regularity for the boundary $\psi(t,\partial\Omega),$ recall that a parametrization of $\Omega$ is given by the equation
$$
\partial_s\gamma_0(s)=X_0(\gamma_0(s)),\quad\gamma_0(0)=x_0\in \partial\Omega.
$$
Therefore we may  parametrize $\psi(t,\partial\Omega)$ by $\gamma_t: s\mapsto\psi(t,\gamma_0(s))$ and thus
$$
\partial_s\gamma_t(s)=(\partial_{X_0}\psi)(t,\gamma_0(s)).
$$
This implies  $\partial_s\gamma_t\in W^{1,\infty}$ and consequently $\gamma_t\in W^{2,\infty}.$
     
     \subsection{Persistence of the co-normal regularity}
     
     Next, we shall study the persistence regularity  of the solutions in the anisotropic  spaces $W_X^p$ and $C_X^\varepsilon.$ This step requires that the first  commutator between the vector field $X_0$ and  the magnetic field $b_0$ vanishes an we believe that this algebraic   condition  is not just a technical artifact  but a deep geometric obstruction for the well-posedness problem. We intend to  prove the following results.
     \begin{prop}
    \label{prop1}
    Let $\varphi_0:\RR^2\to \RR$ be an element of  $ W^{2,\infty}$  and   $G:\RR\to \RR$ be a function such that  $G^\prime\in W^{2,\infty}$   with $\inf_{\RR}|G^\prime|>0$. Take $X_0=\nabla^\perp\varphi_0$ and  $b_0=\nabla^\perp \{G(\varphi_0)\}, $ and assume that the initial data $\omega_0, j_0\in W^p_{X_0}$, with $p\in]2,\infty[$.
           Let $(v,b)$ be a smooth solution of the system \eqref{MHD} defined in some interval $[0,T],$ with $T\leq1$ and  $(X_t)$ be the push-forward of the vector field $X_0$. Then for \mbox{any $t\in [0,T],$  }
     $$
\|(\partial_X\omega,\partial_X j)(t)\|_{L^p}\le C_0\big(1+t W^2(t)\big)e^{CtW(t)}
$$
and 
$$
\| (\partial_X v,\partial_X b)(t)\|_{C^{1-\frac2p}}+\|X_t\|_{C^{1-\frac2p}}\le C_0\big(1+tW^2(t)\big) e^{\exp C_0tW(t)}.
$$
         Moreover,
    $$
    \|\partial_X v\|_{W^{1,p}}+\|\partial_X b\|_{W^{1,p}}\le C_0\big(1+W(t)\big)e^{CtW(t)}.
    $$
    with
     $$
     W(t)\triangleq \|\nabla v\|_{L^\infty_tL^\infty}+\|\nabla b\|_{L^\infty_tL^\infty}.
     $$

     \end{prop}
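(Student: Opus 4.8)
The plan is to use the Elsasser/coupled-transport machinery from Proposition~\ref{transport} together with the algebraic identity \eqref{Iden1} and the commutation structure encoded in \eqref{E0X}. The key preliminary observation is that the choice $b_0=\nabla^\perp\{G(\varphi_0)\}$ with $X_0=\nabla^\perp\varphi_0$ forces $[X_0,b_0]=0$: indeed $b_0=G'(\varphi_0)X_0$, so $\partial_{X_0}b_0-\partial_{b_0}X_0$ can be computed directly and vanishes (this is the point where $\inf|G'|>0$ and $G'\in W^{2,\infty}$ will be used, also to control $\|X_t\|_{\textnormal{Lip}}\lesssim\|b_t\|_{\textnormal{Lip}}$ and conversely on the region where $X$ is non-degenerate). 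By Lemma~\ref{com27} this commutation is preserved by the flow, so $\partial_{X_t}b_t-\partial_{b_t}X_t=0$ for all $t\in[0,T]$, and the bad terms $\partial_{\partial_Xb-\partial_bX}j$ and $\partial_{\partial_Xb-\partial_bX}\omega$ in \eqref{E0X} drop out. What remains is precisely a coupled transport system of the form \eqref{coupled} for $(f,g)=(\partial_X\omega,\partial_X j)$ with source terms $F=0$ and $G=\partial_X\mathcal{H}(v,b)$.

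First I would set up the $L^p$ estimate. Applying the $L^p$ part of Proposition~\ref{transport} to this system gives
$$
\|(\partial_X\omega,\partial_X j)(t)\|_{L^p}\lesssim \|(\partial_{X_0}\omega_0,\partial_{X_0}j_0)\|_{L^p}+\int_0^t\|\partial_X\mathcal{H}(v,b)(\tau)\|_{L^p}\,d\tau,
$$
with the implicit $e^{CV(t)}$ suppressed for the moment (here $V(t)\le CtW(t)$ on $[0,T]$ since $T\le 1$). The first term is $\le C_0$ by assumption~(1). For the source term I would expand $\partial_X\mathcal{H}(v,b)$ using the Leibniz rule and the identity \eqref{Iden1}: after differentiating, one gets terms that are products of $\partial_Xv$, $\partial_Xb$, $\partial_X\partial_Xv$, $\partial_X\partial_Xb$, $\omega$, $j$ together with factors $|X|^{-2}$ and $X$-derivatives of $|X|^{-2}$. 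To bound these in $L^p$ one uses: the bounds \eqref{vort}, \eqref{vorts1} to replace $\partial_Xv,\partial_Xb$ by $\nabla v,\nabla b$ where needed; the continuity of Riesz transforms to get $\|\nabla v\|_{W^{1,p}}\lesssim\|\omega\|_{W^{1,p}}$ etc.; and the fact that $|X|^{-2}$ and its $X$-derivatives are controlled on the relevant region. Crucially, one also needs a Calderón-commutator type estimate to handle the terms where $\partial_X$ falls on the Riesz-transform structure inside $\mathcal{H}$ — this is where the commutator estimates announced for the last section of the paper enter, and this produces a bound of the shape $\|\partial_X\mathcal{H}\|_{L^p}\lesssim W(t)\big(\|(\partial_X\omega,\partial_X j)\|_{L^p}+\|(\omega,j)\|_{L^\infty}\big)+W(t)^2$. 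Feeding this into Gronwall yields $\|(\partial_X\omega,\partial_X j)(t)\|_{L^p}\le C_0(1+tW^2(t))e^{CtW(t)}$.

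Second, for the Hölder bounds on $\partial_Xv$, $\partial_Xb$ and $X_t$: one differentiates $v$ via Biot–Savart, $\partial_Xv=\partial_X\nabla^\perp\Delta^{-1}\omega$, and splits using \eqref{yasser} into a part proportional to $|X|^{-2}X\cdot(\text{second }X\text{-derivatives of }v)$ plus lower-order pieces, so that $\|\partial_Xv\|_{C^{1-2/p}}$ is controlled by $\|\partial_X\omega\|_{L^p}$ (via $W^{1,p}\hookrightarrow C^{1-2/p}$) plus $\|\nabla v\|_{L^\infty}\|X\|_{C^{1-2/p}}$ and $\|\omega\|_{L^\infty}\|X\|_{C^{1-2/p}}$ type terms; similarly for $\partial_Xb$. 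For $X_t$ itself one uses that it solves the transport equation \eqref{tran12}, applies the Hölder part of Proposition~\ref{l555} with source $X\cdot\nabla v$, and estimates $\|X\cdot\nabla v\|_{C^{1-2/p}}\lesssim \|X\|_{C^{1-2/p}}\|\nabla v\|_{L^\infty}+\|X\|_{L^\infty}\|\nabla v\|_{C^{1-2/p}}$, the last factor being $\lesssim\|\partial_Xv\|_{C^{1-2/p}}+\|\omega\|_{L^\infty}$ again by \eqref{yasser}. Assembling these into a closed system of differential inequalities for the triple $\big(\|\partial_Xv\|_{C^{1-2/p}},\|\partial_Xb\|_{C^{1-2/p}},\|X_t\|_{C^{1-2/p}}\big)$ and using the already-established $L^p$ bound as a forcing term, Gronwall gives the double-exponential $C_0(1+tW^2(t))e^{\exp(C_0tW(t))}$ — the double exponential arising because the Hölder estimate for a transport equation already costs one exponential of $V(t)$ and the coefficients themselves depend on $W(t)$. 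The $W^{1,p}$ bound on $\partial_Xv,\partial_Xb$ follows by the same splitting, now estimating in $W^{1,p}$ directly and using the $L^p$ bound on $(\partial_X\omega,\partial_Xj)$ together with Calderón–Zygmund. The main obstacle, as the authors themselves flag, is the source term $\partial_X\mathcal{H}(v,b)$: without the identity \eqref{Iden1} one would face the uncontrolled quantity $\mathcal{R}_{ik}\omega\,\mathcal{R}_{lm}j$ differentiated along $X$, and it is precisely the rewriting in terms of $\partial_Xv,\partial_Xb$ modulo the harmless $|X|^{-2}\{jX\cdot\partial_Xv-\omega X\cdot\partial_Xb\}$ correction, combined with the commutator estimates, that makes the nonlinear estimate close.
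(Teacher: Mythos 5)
Your overall architecture is the right one and matches the paper's: you correctly identify that $b_0=G'(\varphi_0)X_0$ forces $[X_0,b_0]=0$, that Lemma~\ref{com27} propagates this so the terms $\partial_{\partial_Xb-\partial_bX}\omega$, $\partial_{\partial_Xb-\partial_bX}j$ drop out, that the resulting system is of the form \eqref{coupled} and is closed by Gronwall, that the estimate must be run in $L^p$ rather than $C^{-2/p}$, and that $\|\nabla X\|_{L^\infty}$ must be recovered from the structural relation $b_t=G'(\varphi_t)X_t$ rather than from the transport equation. However, there is a genuine gap in the mechanism you propose for the two analytic steps that carry the proof. You estimate the source term $\partial_X\mathcal{H}(v,b)$ and the quantity $\partial_Xv$ by invoking \eqref{Iden1} and \eqref{yasser}, which introduce the weight $|X|^{-2}$ and its derivatives. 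These identities are only valid where $X(x)\neq0$, and in Proposition~\ref{prop1} no non-degeneracy of $X_0=\nabla^\perp\varphi_0$ is assumed — indeed in the vortex-patch application $\varphi_0$ is constant on $\partial\Omega$ and therefore \emph{must} have a critical point inside $\Omega$, so $|X|^{-2}$ is unbounded and a global $L^p$ or $C^{1-2/p}$ estimate cannot be extracted this way. The paper reserves \eqref{Iden1} for the \emph{localized} analysis of the next subsection, where the cut-off $1-\chi$ guarantees $|X(t,x)|\geq\delta e^{-CV(t)}$ on its support. For Proposition~\ref{prop1} itself the correct (and $X$-degeneracy-free) mechanism is the one you mention only in passing: write $\partial_X\nabla v=\partial_X\mathcal{R}_{ik}\omega=\mathcal{R}_{ik}(\partial_X\omega)-[\mathcal{R}_{ik},\partial_X]\omega$ and $\partial_Xv=\mathcal{L}(\partial_X\omega)-[\mathcal{L},\partial_X]\omega$, then apply the Calder\'on commutator bound of Lemma~\ref{cald} (in $L^p$, with the $\|\nabla X\|_{L^\infty}$ factor controlled via $b=G'(\varphi)X$) and Lemma~\ref{An1} (in $C^{1-2/p}$). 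Your proposal hedges between the two routes; only the commutator route closes globally.

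A secondary but concrete error: in the transport estimate for $X_t$ you bound the source by $\|X\|_{C^{1-2/p}}\|\nabla v\|_{L^\infty}+\|X\|_{L^\infty}\|\nabla v\|_{C^{1-2/p}}$ and then claim $\|\nabla v\|_{C^{1-2/p}}\lesssim\|\partial_Xv\|_{C^{1-2/p}}+\|\omega\|_{L^\infty}$. That inequality is false in general — $\nabla v$ has no global H\"older regularity when $\omega$ is merely bounded, and recovering it from the conormal derivative alone is impossible precisely where $X$ degenerates. Fortunately the detour is unnecessary: the source in \eqref{tran12} is exactly $X\cdot\nabla v=\partial_Xv$, so Proposition~\ref{l555} applies with $\|\partial_Xv\|_{C^{1-2/p}}$ directly, as in \eqref{dina12}, and the Gronwall loop closes without ever estimating $\|\nabla v\|_{C^{1-2/p}}$.
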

     \begin{proof}
     
      Denote by $\mathcal{L}\triangleq \nabla^\perp\Delta^{-1},$ then from Biot-Savart law we can easily check that 
     $$
     \partial_X v=\mathcal{L}(\partial_X\omega)-[\mathcal{L}, \partial_X]\omega.
     $$
     To estimate the first term of the right-hand side we combine the dyadic partition of the unity  with  Bernstein inequality leading   for $p\in [1,\infty]$ to 
\begin{eqnarray*}
\|\mathcal{L}\partial_X \omega\|_{C^{1-\frac2p}}&\lesssim&\|\Delta_{-1}\nabla^\perp\Delta^{-1}\partial_X \omega\|_{L^\infty}+\|\partial_X \omega\|_{C^{-\frac2p}}\\
&\lesssim&\|\nabla^\perp\Delta^{-1}\partial_X \omega\|_{L^p}+\|\partial_X \omega\|_{C^{-\frac2p}}.
\end{eqnarray*}
According to Lemma \ref{lempro}  the vector field $X_t$ remains solenoidal   and as Riesz transforms are continuous over $L^p$ for  $p\in ]1,\infty[,$ then 
\begin{eqnarray*}
\|\nabla^\perp\Delta^{-1}\partial_X \omega\|_{L^p}&=&\|\nabla^\perp\Delta^{-1}\hbox{div }(X \omega)\|_{L^p}\\
&\lesssim&\|X \omega\|_{L^p}\\
&\lesssim&\|X\|_{L^\infty}\|\omega\|_{L^p}.
\end{eqnarray*}
By the virtue of Lemma \ref{An1}, one obtains
$$
\big\|[\mathcal{L}, \partial_X]\omega\big\|_{C^{1-\frac2p}}\lesssim \|X\|_{C^{1-\frac2p}}\|\omega\|_{L^\infty\cap L^p}.
$$
Putting together the preceding estimates yields
\begin{equation}\label{mohim1}
\|\partial_X v\|_{C^{1-\frac2p}}\lesssim \|\partial_X \omega\|_{C^{{-\frac2p}}}+\|X\|_{C^{1-\frac2p}}\|\omega\|_{L^\infty\cap L^p}.
\end{equation}
Next we shall estimate $\|X(t)\|_{C^{1-\frac2p}}$. For this purpose we use the persistence result of \mbox{Proposition \ref{l555}} which gives for $p\in(2,\infty)$
\begin{equation}\label{dina12}
\|X(t)\|_{C^{1-\frac2p}}\le Ce^{CV(t)}\Big(\|X_0\|_{C^{1-\frac2p}}+\int_0^te^{-CV(\tau)} \|\partial_X v(\tau)\|_{C^{1-\frac2p}}d\tau\Big),
\end{equation}
with $V(t)=\|\nabla v\|_{L^1_t L^\infty}.$ Set $f(t)= e^{-CV(t)}\|\partial_X v\|_{C^{1-\frac2p}},$ then
$$
f(t)\lesssim  \|\partial_X \omega(t)\|_{C^{{-\frac2p}}}+ \|X_0\|_{C^{1-\frac2p}}\|\omega(t)\|_{L^p\cap L^\infty}+\|\omega(t)\|_{L^p\cap L^\infty}\int_0^t f(\tau)d\tau.
$$
This gives in view of Gronwall inequality,
$$
f(t)\lesssim \Big(\|\partial_X \omega\|_{L^\infty_tC^{-\frac2p}}+\|X_0\|_{C^{1-\frac2p}}\|\omega\|_{L^\infty_t(L^p\cap L^\infty)} \Big)e^{Ct\|\omega\|_{L^\infty_t(L^p\cap L^\infty)}}.
$$
Set $W(t)\triangleq \|\nabla v\|_{L^\infty_tL^\infty}+\|\nabla b\|_{L^\infty_tL^\infty}$ then we obtain
from Proposition \ref{a priori}
\begin{eqnarray*}
\|\omega\|_{L^\infty_t(L^p\cap L^\infty)}&\le& C_0 e^{Ct W(t)}+C_0 t W^2(t).
\end{eqnarray*}
Therefore we get by restricting $t\in [0,1]$
\begin{equation*}
\|\partial_X v(t)\|_{C^{1-\frac2p}}\lesssim \Big(\|\partial_X \omega(t)\|_{C^{{-\frac2p}}}+C_0+C_0 tW^2(t)\Big) e^{\exp{C_0t W(t)}}.
\end{equation*}
Performing the same analysis  for the magnetic field we get
\begin{equation*}
\|\partial_X b(t)\|_{C^{1-\frac2p}}\lesssim \Big(\|\partial_X j(t)\|_{C^{{-\frac2p}}}+C_0+C_0 tW^2(t)\Big) e^{\exp{C_0t W(t)}}
\end{equation*}
and consequently
\begin{equation}\label{hmi12}
\|(\partial_X v,\partial_X b)(t)\|_{C^{1-\frac2p}}\lesssim \Big(\|(\partial_X \omega,\partial_X j)(t)\|_{C^{{-\frac2p}}}+C_0+C_0 tW^2(t)\Big) e^{\exp{C_0t W(t)}}.
\end{equation}

To estimate the co-normal regularity of $\omega$ and $j$ we shall first write  down  the  equations of $\partial_X \omega$ and $\partial_X j$.
 According to Proposition \ref{com34} and using the equations \eqref{E} we get
  \begin{equation}
 \left\{ 
\begin{array}{ll} 
D_t\partial_X\omega=\partial_X(b\cdot\nabla j)\\
D_t\partial_Xj=\partial_X(b\cdot\nabla \omega)+2\partial_X(\partial_1b\cdot\nabla v^2-\partial_2b\cdot\nabla v^1).
\end{array} \right.    
     \end{equation}
          From the relation \eqref{comm1}, we get
     $$
     \partial_X\partial_b -\partial_b\partial_X=\partial_{\partial_X b-\partial_b X}.
     $$
     Consequently,
     \begin{equation}\label{com57}
 \left\{ 
\begin{array}{ll} 
D_t\partial_X\omega=b\cdot\nabla\partial_X j+\partial_{\partial_X b-\partial_b X} \,j \\
D_t\partial_Xj=b\cdot\nabla \partial_X\omega+\partial_{\partial_X b-\partial_b X}\,\omega+2\partial_X(\partial_1b\cdot\nabla v^2-\partial_2b\cdot\nabla v^1)
\end{array} \right.    
     \end{equation}
    Set $Y_t=\partial_{X_t} b_t-\partial_{b_t} X_t$ then we can easily check that from our choice we obtain at time zero
     $$
     Y_0=0.
     $$
     Now according to Lemma \ref{com27}, we get
     $$
     Y_t=0,\quad\forall t\in[0,T].
     $$
     Therefore equations \eqref{com57} become
      \begin{equation*}
 \left\{ 
\begin{array}{ll} 
D_t\partial_X\omega=b\cdot\nabla\partial_X j \\
D_t\partial_Xj=b\cdot\nabla \partial_X\omega+2\partial_X(\partial_1b\cdot\nabla v^2-\partial_2b\cdot\nabla v^1).
\end{array} \right.    
     \end{equation*}
     The estimate of the last term of the second equation  in the H\"older space with negative index $C^{{-\frac2p}}$ is quite difficult due to the fact that the product  $L^\infty\times C^{{-\frac2p}}$ is not contained in $C^{{-\frac2p}}.$ To avoid  this technical difficulty we shall replace the space $C^{{-\frac2p}}$ by   Lebesgue space $L^p$ which scales at the same level and satisfies  $L^{p}\hookrightarrow C^{1-\frac2p}$. Using Proposition \ref{transport} one gets
\begin{equation}\label{pro567}
\|(\partial_X\omega,\partial_X j)(t)\|_{L^{p}}\le C_0+ C\int_0^t\Big(\|\partial_X\nabla b(\tau)\|_{L^{p}}\|\nabla v(\tau)\|_{L^\infty}+\|\partial_X\nabla v(\tau)\|_{L^p}\|\nabla b(\tau)\|_{L^\infty}\Big)d\tau.
\end{equation}
From Biot-Savart law we have easily
$$
\|\partial_X\nabla b\|_{L^{p}}\le \sum_{i,k=1}^2\|\partial_X\mathcal{R}_{i,k}j\|_{L^{p}}\quad\hbox{and}\quad \|\partial_X\nabla v\|_{L^p}\le \sum_{i,k=1}^2\|\partial_X\mathcal{R}_{i,k}\omega\|_{L^p}
$$
where $\mathcal{R}_{i,k}=\partial_i\partial_j\Delta^{-1}.$ Now we shall combine the identity
$$
\partial_X\mathcal{R}_{i,k}\omega=\mathcal{R}_{i,k}\big(\partial_X\omega\big)-\big[\mathcal{R}_{i,k}, \partial_X\big]\omega  
$$
together with the continuity of Riesz transforms on the $L^p$ spaces with $p\in(1,\infty)$ leading finally to 
$$
\|\partial_X\mathcal{R}_{i,k}\omega\|_{L^p}\lesssim \|\partial_X\omega\|_{L^p}+\big\|[\mathcal{R}_{i,k}, \partial_X]\omega\|_{L^p}.
$$
At this stage we shall use  Calder\'on's estimate, see Lemma \ref{cald},
\begin{equation}\label{tit11}
\big\|[\mathcal{R}_{i,j}, \partial_X]\omega\|_{L^p}\le C\|\nabla X\|_{L^\infty}\|\omega\|_{L^p}.
\end{equation}
It is not at all obvious how to bound the Lipschitz norm of the vector field $X$ from its evolution equation due to the low regularity of $v$ and as we shall see  its specific  structure will be of great importance to reach this target. Indeed, we know that at time zero the magnetic field is given by $b_0=\nabla^\perp\{G(\varphi_0)\}.$ Thus it follows from Lemma \ref{lempro} that
\begin{equation}\label{eqde}
b_t=\nabla^\perp\{G(\varphi(t))\}\quad\hbox{and}\quad X_t=\nabla^\perp\varphi_t
\end{equation}
with $\varphi_t$ the unique solution of the transport equation 
$$
D_t\varphi=0,\quad\varphi(0)=\varphi_0.
$$
Therefore we get the relation   $b_t=G^\prime(\varphi_t) X_t$ and thus differentiating with respect to the spatial variable we obtain 
$$
\|\nabla X_t G^\prime(\varphi)\|_{L^\infty}\leq \|\nabla b_t\|_{L^\infty}+\|X_t\|_{L^\infty}^2\|G^{\prime\prime}\|_{L^\infty}.
$$
By the assumptions $G^{\prime\prime}$ is bounded and $|G^\prime|$ is bounded below by a  positive constant which imply 
$$
\|\nabla X_t\|_{L^\infty}\lesssim \|\nabla b_t\|_{L^\infty}+\|X_t\|_{L^\infty}^2.
$$
Coming back to the equation \eqref{tran12} and using the maximum principle and Gronwall inequality  we get easily
\begin{eqnarray}\label{inf1}
\nonumber\|X(t)\|_{L^\infty}&\le& \|X_0\|_{L^\infty}+\int_0^t\|X(\tau)\|_{L^\infty}\|\nabla v(\tau)\|_{L^\infty}d\tau\\
&\le& C_0 e^{CV(t)}.
\end{eqnarray}
Hence
\begin{eqnarray}\label{vor7}
\|X_t\|_{\textnormal{Lip}}\lesssim \|\nabla b_t\|_{L^\infty}+C_0 e^{Ct W(t)}.
\end{eqnarray}
Plugging this estimate into \eqref{tit11} and using  Proposition \ref{a priori} one obtains
\begin{eqnarray*}
\big\|[\mathcal{R}_{i,j}, \partial_X]\omega(t)\|_{L^p}&\le& C_0\big(W(t)+e^{CtW(t)}\big)e^{CtW(t)}\\
&\le& C_0\big(W(t)+1\big) e^{CtW(t)}.
\end{eqnarray*}
Thus we deduce
\begin{equation}\label{eq2}
\|\partial_X\mathcal{R}_{i,k}\omega(t)\|_{L^p}\lesssim C_0\big(W(t)+1\big) e^{CtW(t)}+\|\partial_X\omega(t)\|_{L^p}.
\end{equation}
Similarly we get for the current density the estimate, 
$$
\|\partial_X\mathcal{R}_{i,k}\, j(t)\|_{L^p}\lesssim C_0\big(W(t)+1\big) e^{CtW(t)}+\|\partial_X\, j(t)\|_{L^p}.
$$
Inserting these estimates into \eqref{pro567} yields in view of Gronwall inequality
\begin{eqnarray}\label{fin1}
\nonumber\|(\partial_X\omega,\partial_X j)(t)\|_{L^p}&\le& C_0\big(tW^2(t)+tW(t)\big) e^{CtW(t)}+\int_0^tW(\tau)\|(\partial_X\omega,\partial_X j)(\tau)\|_{L^p}\\
&\le&C_0\big(1+tW^2(t)\big) e^{CtW(t)}.
\end{eqnarray}
Combining this estimate with \eqref{hmi12} gives  for $t\in [0,1]$
$$
\| (\partial_X v,\partial_X b)(t)\|_{C^{1-\frac2p}}\le C_0\big(1+tW^2(t)\big) e^{\exp C_0tW(t)}.
$$
Putting this estimate in  \eqref{dina12} one gets for $t\in [0,1],$
\begin{equation}\label{pr24}
\|X(t)\|_{C^{1-\frac2p}}\le C_0\big(1+tW^2(t)\big) e^{\exp C_0tW(t)}.
\end{equation}
Now we shall estimate the co-normal regularity of   $\partial_X v$ and $\partial_X b$ in $W^{1,p}.$  First it is easily seen  that for $p\in(1,\infty)$
     \begin{eqnarray*}
     \|\partial_X v\|_{L^p}&\le& \|X\|_{L^\infty}\|\nabla v\|_{L^p}\\
     &\lesssim& \|X\|_{L^\infty}\|\omega\|_{L^p}.
     \end{eqnarray*}
     Denote by $\mathcal{L}\triangleq \nabla^\perp\Delta^{-1},$ then from Biot-Savart law, 
     
     \begin{eqnarray*}
     \partial_i\partial_X v&=&\partial_iX\cdot\nabla v+\partial_X\partial_i\mathcal{L}\omega \\
     &=&\partial_iX\cdot\nabla v+\partial_i\mathcal{L}\partial_X\omega-\big[\partial_i\mathcal{L},\partial_X   \big]\omega.
     \end{eqnarray*}
    Since Riesz transform $\partial_i\mathcal{L}$ is continuous over $L^p$ then we deduce
    $$
   \| \partial_iX\cdot\nabla v+\partial_i\mathcal{L}\partial_X\omega\|_{L^p}\lesssim \|\nabla X\|_{L^\infty}\|\omega\|_{L^p}+\|\partial_X\omega\|_{L^p}.
    $$

Using Lemma \ref{cald}, we get
$$
\big\|[\partial_i\mathcal{L}, \partial_X]\omega\big\|_{L^p}\lesssim \|\nabla X\|_{L^\infty}\|\omega\|_{ L^p}.
$$
Putting together the preceding estimates implies
\begin{equation}\label{mohim001}
\|\partial_X v\|_{W^{1,p}}\lesssim \|\partial_X \omega\|_{L^p}+\|X\|_{\textnormal{Lip}}\|\omega\|_{ L^p}.
\end{equation}
Performing the same computations for the magnetic field gives  the estimate
\begin{equation}\label{mohim0012}
\|\partial_X b\|_{W^{1,p}}\lesssim \|\partial_X j\|_{L^p}+\|X\|_{\textnormal{Lip}}\|j\|_{ L^p}.
\end{equation}
Combining  Proposition \ref{a priori}
  with the estimates \eqref{mohim001}, \eqref{mohim0012}, \eqref{vor7} and \eqref{fin1} gives
 \begin{eqnarray}\label{hmi2112}
\nonumber \|\partial_X v(t)\|_{W^{1,p}}+\|\partial_X b(t)\|_{W^{1,p}}&\le& C_0\Big(1+tW^2(t)+\|\nabla b(t)\|_{L^\infty}\Big)e^{CtW(t)}\\
&\le&  C_0\big(1+W(t)\big)e^{CtW(t)}.
\end{eqnarray}

\end{proof}
\subsection{Persistence of the  regularity far from the boundary}
We have seen in the previous section how to propagate the co-normal regularity of the solution using in a crucial way the special structure of the magnetic field which should be tangential to the boundary. However the vector field $X_0$ is singular at some points far from the boundary and thus we cannot recover the regularity everywhere. The idea to follow is simple: to track the regularity  far from the singular set we can use somehow the hyperbolic structure of the equations through the classical  principle of  finite speed propagation of the smooth part.   Even though the equations are not local, we shall  prove that the singular set does not affect for small time the smooth part of the solution. Before giving more details we need to recall the following notations:
$$
\mathcal{Z}^\delta_{X_0}\triangleq\big\{ x\in\RR^2,\quad |X_0(x)|\le \delta\big\},\quad \Sigma_{\textnormal{sing}}^{1-\frac2p}\triangleq \Sigma_{\textnormal{sing}}^{1-\frac2p}(\omega_0)\cup \Sigma_{\textnormal{sing}}^{1-\frac2p}(j_0)
$$
and
$$
W(t)\triangleq \sup_{\tau\in[0,t]}\big(\| \nabla v(\tau)\|_{L^\infty}+\| \nabla b(\tau)\|_{L^\infty}\big),\quad V(t)\triangleq \int_0^t\big(\|\nabla v(\tau)\|_{L^\infty}+\|\nabla b(\tau)\|_{L^\infty}\big)d\tau.
$$
\begin{prop}
Let $v_0$ and $b_0$ be  two divergence-free vector fields satisfying the assumptions of Theorem \ref{thm2}.
Let $\psi$ denote the flow associated to the velocity $v$. Then there exists a function $(t,x)\mapsto \chi(t,x)\in [0,1],$ taking $1$ in a neighborhood of $\psi(t,\mathcal{Z}^\delta)$ and vanishing  around $\psi(t,\Sigma_{\textnormal{sing}}^{1-\frac2p})$ such that: for any  $T$ satisfying
$$
C_0T W^4(T) e^{\exp{C_0TW(T)}}\leq1
$$
we get
$$
\forall t\in[0,T],\quad \|\chi(t)\omega(t)\|_{C^{1-\frac2p}}+\|\chi(t) j(t)\|_{C^{1-\frac2p}}\le C_0(1+tW^2(t)) e^{\exp{C_0t W(t)}}.
$$
\end{prop}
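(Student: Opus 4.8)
The plan is to propagate the $C^{1-\frac2p}$ regularity of $(\omega,j)$ on a moving region that follows the flow, avoids the singular set $\Sigma_{\textnormal{sing}}^{1-\frac2p}$, and is cut out \emph{along the streamlines of the magnetic field}, so that the cut-off commutes with both the material derivative $D_t$ and with $b\cdot\nabla$.

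First I would build the cut-off. Since $D_t\varphi=0$ (Lemma \ref{lempro}) and, by the separation hypothesis \eqref{separa}, the compact sets $\varphi_0(\mathcal{Z}_{X_0}^\delta)$ and $\varphi_0\big(\Sigma_{\textnormal{sing}}^{1-\frac2p}\big)$ lie at positive distance in $\RR$, I pick a smooth $\theta:\RR\to[0,1]$ with $\theta\equiv1$ on a neighbourhood of $\varphi_0(\mathcal{Z}_{X_0}^\delta)$ and $\theta\equiv0$ on a neighbourhood of $\varphi_0(\Sigma_{\textnormal{sing}}^{1-\frac2p})$, and set $\chi(t,x)\triangleq\theta(\varphi(t,x))$. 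Then $D_t\chi=\theta'(\varphi)\,D_t\varphi=0$; moreover $b_t=G'(\varphi_t)X_t$ with $X_t=\nabla^\perp\varphi_t$ (see \eqref{eqde}), hence $b\cdot\nabla\chi=\theta'(\varphi)\,G'(\varphi)\,\nabla^\perp\varphi\cdot\nabla\varphi=0$ and likewise $X\cdot\nabla\chi=0$. Because $\varphi(t,\psi(t,x))=\varphi_0(x)$ and $\|\nabla\varphi(t)\|_{L^\infty}=\|X_t\|_{L^\infty}\le C_0e^{CV(t)}$ by \eqref{inf1}, the function $\chi(t)$ equals $1$ near $\psi(t,\mathcal{Z}_{X_0}^\delta)$, vanishes near $\psi(t,\Sigma_{\textnormal{sing}}^{1-\frac2p})$, and the size of these neighbourhoods is bounded below by a quantity depending only on $\delta$, the data, and $W(T)$. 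Using $D_t\chi=0$ and $b\cdot\nabla\chi=0$ together with \eqref{E}, the pair $(\chi\omega,\chi j)$ solves the coupled transport system \eqref{coupled} with velocity $v$, magnetic field $b$, $F=0$ and $G=\chi\,\mathcal{H}(v,b)$, where $\mathcal{H}(v,b)=2\partial_1b\cdot\nabla v^2-2\partial_2b\cdot\nabla v^1$. Proposition \ref{transport} (Hölder estimates with index $\varepsilon=1-\frac2p\in(0,1)$) then gives
$$
\|\chi\omega(t)\|_{C^{1-\frac2p}}+\|\chi j(t)\|_{C^{1-\frac2p}}\le Ce^{CV(t)}\Big(\|\chi\omega(0)\|_{C^{1-\frac2p}}+\|\chi j(0)\|_{C^{1-\frac2p}}+\int_0^te^{-CV(\tau)}\|\chi\,\mathcal{H}(v,b)(\tau)\|_{C^{1-\frac2p}}d\tau\Big),
$$
and the data term is $\le C_0$ since $\chi(0)$ is supported at positive distance from the compact set $\Sigma_{\textnormal{sing}}^{1-\frac2p}(\omega_0,j_0)$, so a partition of unity together with Assumptions (2)--(3) and the definition of the Hölderian singular support yields $\chi(0)\omega_0,\chi(0)j_0\in C^{1-\frac2p}$. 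As $V(t)\le tW(t)$, everything reduces to bounding $\|\chi\,\mathcal{H}(v,b)\|_{C^{1-\frac2p}}$.

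This last estimate is the heart of the matter and the main obstacle, precisely because $\mathcal{H}(v,b)$ is a linear combination of products $\mathcal{R}_{ik}\omega\,\mathcal{R}_{lm}j$ of \emph{nonlocal} quantities, so a naive local cut-off is useless. I would introduce a slightly larger streamline cut-off $\chi_1=\theta_1(\varphi)$, with $\theta_1\equiv1$ on $\mathrm{supp}\,\theta$ and still $\equiv0$ near $\varphi_0(\Sigma_{\textnormal{sing}}^{1-\frac2p})$ (possible thanks to the slack in \eqref{separa}), and decompose $\omega=\chi_1\omega+(1-\chi_1)\omega$, and likewise $j$. Every product containing at least one factor $\mathcal{R}_{ik}((1-\chi_1)\,\cdot\,)$, once multiplied by $\chi$, is of the form $\chi(x)\int K_{ik}(x-y)(1-\chi_1)(y)g(y)\,dy$ with $g\in\{\omega,j\}$ and $|x-y|\ge d_0>0$ on $\mathrm{supp}\,\chi\times\mathrm{supp}(1-\chi_1)$, $d_0$ controlled by the data and $W(T)$; since $K_{ik}$ is a Calder\'on--Zygmund kernel, smooth off the diagonal, this piece is $C^\infty$ with all norms $\lesssim C_0d_0^{-N}\|g\|_{L^1\cap L^\infty}\lesssim C_0(1+tW^2(t))e^{\exp C_0tW(t)}$ by Proposition \ref{a priori}, and its product with the remaining Riesz factor still lands in $C^{1-\frac2p}$. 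There remains $\chi\,\mathcal{R}_{ik}(\chi_1\omega)\,\mathcal{R}_{lm}(\chi_1 j)$. On the part of $\mathrm{supp}\,\chi$ at positive distance from $\psi(t,\mathcal{Z}_{X_0}^\delta)$, where $X$ is non-degenerate, I would rewrite $\mathcal{H}(v,b)$ via the algebraic identity \eqref{Iden1}: it displays $\mathcal{H}(v,b)$ as $|X|^{-2}$ times a combination of $\partial_Xv,\partial_Xb,X$ --- all already estimated in $C^{1-\frac2p}$ by Proposition \ref{prop1} --- together with $(\omega,j)$ entering \emph{linearly}, which only costs a linear Gronwall step. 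Near $\psi(t,\mathcal{Z}_{X_0}^\delta)$, where $X$ degenerates and \eqref{Iden1} is unavailable, the product stays genuinely quadratic: bounding Riesz transforms on $C^s$, $0<s<1$, and on $L^p$, and using that $C^{1-\frac2p}$ is an algebra, one gets
$$
\|\chi\,\mathcal{R}_{ik}(\chi_1\omega)\,\mathcal{R}_{lm}(\chi_1 j)\|_{C^{1-\frac2p}}\lesssim \|\chi\|_{C^{1-\frac2p}}\big(\|\chi_1\omega\|_{C^{1-\frac2p}}+\|\chi_1\omega\|_{L^p}\big)\big(\|\chi_1 j\|_{C^{1-\frac2p}}+\|\chi_1 j\|_{L^p}\big).
$$

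To close this quadratic feedback I would run a continuation argument on $[0,T]$ for
$$
M(t)\triangleq\sup\big(\|\kappa\omega(t)\|_{C^{1-\frac2p}}+\|\kappa j(t)\|_{C^{1-\frac2p}}\big),
$$
the supremum taken over a fixed family of streamline cut-offs $\kappa$ with supports in a common compact region disjoint from a neighbourhood of $\Sigma_{\textnormal{sing}}^{1-\frac2p}$, chosen large enough that the $\chi_1$ produced in each of the above estimates again belongs to the family. Assembling the pieces gives an inequality of the type $M(t)\le C_0(1+tW^2(t))e^{\exp C_0tW(t)}+C_0t\,W^2(t)e^{\exp C_0tW(t)}\sup_{[0,t]}M^2$, and the smallness hypothesis $C_0TW^4(T)e^{\exp C_0TW(T)}\le1$ is exactly what makes this continuation close, yielding the stated bound with $\chi=\kappa$. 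The $W^4$ and the double exponential trace back, respectively, to squaring the bound $\|\omega\|_{L^\infty_t(L^p\cap L^\infty)}\le C_0(e^{CtW}+tW^2)$ from Proposition \ref{a priori} and to the factor $e^{\exp C_0tW(t)}$ already present in \eqref{hmi12}--\eqref{pr24}. In short, the genuine difficulty --- and the reason the cut-off must move with the flow and be cut along field lines --- is the interplay between the nonlocality of the Riesz transforms in $\mathcal{H}(v,b)$ and the quadratic self-interaction of the localized solution near the degeneracy set of $X$; everything else is the bookkeeping of already-established transport and commutator estimates.
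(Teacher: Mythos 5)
Your construction of the cut-off (a streamline function $\chi=\theta(\varphi(t,\cdot))$ killed by both $D_t$ and $b\cdot\nabla$), the reduction to estimating $\chi\,\mathcal{H}(v,b)$ in $C^{1-\frac2p}$, the use of the identity \eqref{Iden1} where $X$ is non-degenerate, and the closure by a quadratic Gronwall inequality under the smallness condition all coincide with the paper's argument. The gap is in the one place where you depart from it: your treatment of the nonlocality of the Riesz transforms via an enlarged cut-off $\chi_1$ and off-diagonal kernel decay, closed by a supremum $M(t)$ over ``a fixed family of streamline cut-offs chosen large enough that the $\chi_1$ produced in each estimate again belongs to the family.'' No such family exists: your off-diagonal estimate requires $\operatorname{dist}\big(\operatorname{supp}\kappa,\operatorname{supp}(1-\chi_1)\big)\ge d_0>0$, i.e.\ the enlargement of each $\kappa$ must equal $1$ on a definite neighbourhood of $\operatorname{supp}\kappa$ while still vanishing near the singular support. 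A family that is simultaneously fixed, closed under this strict enlargement, and has the separation $d_0$ bounded below uniformly (your constants degrade like $d_0^{-N}$) cannot be arranged; any attempt produces an infinite nested chain of cut-offs whose mutual gaps shrink to zero, so the resulting scale of inequalities $M_{\kappa}\le A+ C(d_0)\int M_{\chi_1}^2$ does not collapse to a single closed inequality for $\sup_\kappa M_\kappa$. This is not a removable bookkeeping issue for a quadratic nonlinearity: it is the same obstruction that defeats naive localization in any Cauchy--Kovalevskaya-type ladder.

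The paper sidesteps this entirely by never enlarging the cut-off: it writes $\chi\,\partial_i v=\partial_i\nabla^\perp\Delta^{-1}(\chi\omega)-\big[\partial_i\nabla^\perp\Delta^{-1},\chi\big]\omega$ with the \emph{same} $\chi$, and bounds the commutator in $C^{1-\frac2p}$ by $\|\chi\|_{\textnormal{Lip}}\|\omega\|_{L^p}$ using Lemma \ref{cald} (2) (and similarly for $b$); the localized gradients $\chi\nabla v$, $\chi\nabla b$ are then controlled by $\|\chi\omega\|_{C^{1-\frac2p}}$, $\|\chi j\|_{C^{1-\frac2p}}$ plus harmless terms, and the quadratic Gronwall closes for the single quantity $g(t)=\|\chi\omega(t)\|_{C^{1-\frac2p}}+\|\chi j(t)\|_{C^{1-\frac2p}}$. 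Since you already invoke the paper's commutator lemmas elsewhere, replacing your $\chi_1$-splitting by this commutator identity repairs the proof and leaves the rest of your argument intact.
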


\begin{proof}
%Since $\textnormal{div} X_0=0$ then there exists a potential $\varphi_0:\RR^2\to\RR$ belonging to the class $C^{1+\varepsilon}$ such that $\nabla^\perp\varphi_0=X_0$. The critical  points of $\varphi_0$ coincide with the zeros of the vector field denoted by $\Sigma_0.$ For $h\geq0$ we define the sets
%$$
%\Sigma_{0,h}\triangleq\big\{ x\in\RR^2,\quad |X_0(x)|\le h\big\},\quad \Sigma_{\textnormal{sing},h}\triangleq \big\{ x\in\RR^2,\quad \textnormal{dist}(x,\Sigma_{\textnormal{sing}})\le h\big\}
%$$
%where $\Sigma_{\textnormal{sing}}$ denotes the union of the singular sets of $\omega_0$ and $j_0.$ 

From the assumption \eqref{separa} and  the compactness of the singular set $\Sigma_{\textnormal{sing}}^{1-\frac2p}$ we can easily prove the existence of small $\delta>0$ depending particularly on $\|\nabla\varphi_0\|_{L^\infty}$ such that  
$$\textnormal{dist}\big(\varphi_0(\mathcal{Z}^\delta_{X_0}),\varphi_0(\Sigma_{\textnormal{sing}}^{{1-\frac2p},\delta})\big)>0,
$$
where we denote by  
$$
\Sigma_{\textnormal{sing}}^{{1-\frac2p},\delta}\triangleq\Big\{x\in\RR^2; d(x, \Sigma_{\textnormal{sing}}^{1-\frac2p})\le \delta\Big\}.
$$
Using Urysohn Theorem 
 we can construct a smooth function $H:\RR\to[0,1]$ such that
 \begin{equation*}
H(\theta) =\left\{ 
\begin{array}{ll} 
1,\quad\hbox{if}\quad \theta\in\varphi_0(\mathcal{Z}^\delta_{X_0}),\\
0,\quad\hbox{if}\quad \theta\in \varphi_0(\Sigma_{\textnormal{sing}}^{{1-\frac2p},\delta}).
\end{array} \right.    
     \end{equation*}
     An explicit formula for this function in the  Lipschitz class   is given by 
     $$
     H(\theta)=\frac{\textnormal{dist}(\theta, A)}{\textnormal{dist}(\theta, A)+\textnormal{dist}(\theta, B)}, \quad A\triangleq\varphi_0(\Sigma_{\textnormal{sing}}^{{1-\frac2p},\delta}),\quad  B\triangleq \varphi_0(\mathcal{Z}^\delta_{X_0}).
     $$
     We point out that by enlarging a little bit $A$ and $B$ and  following a  smoothing procedure we can construct $H$ in $C^\infty$ class with bounded derivatives. 
     Now introduce the new function 
     \begin{equation}\label{def9g}
     \chi_0(x)=H(\varphi_0(x)).
     \end{equation}
     Since $\varphi_0$ belongs to $W^{2,\infty}$ and $H$ is very smooth we get $\chi_0\in W^{2,\infty}.$  It is easy to check that  this function satisfies the following properties:
     \begin{equation*}
\chi_0(x) =\left\{ 
\begin{array}{ll} 
1,\quad\hbox{if}\quad x\in \mathcal{Z}^\delta_{X_0},\\
0,\quad\hbox{if}\quad x\in\Sigma_{\textnormal{sing}}^{{1-\frac2p},\delta}.
\end{array} \right.       
\end{equation*}
It is clear that
\begin{equation}\label{Eq3}
\forall x\in \RR^2,\quad 1-\chi_0(x)\neq0\Longrightarrow |X_0(x)|>\delta.
\end{equation}
Moreover according once again to the assumptions of Theorem \ref{thm2}, the functions $ \chi_0\omega_0$ and $\chi_0 j_0$ belongs to the space $C^{1-\frac2p}.$
Let $\varphi$ be the solution of the transport equation 
     \begin{equation}\label{Eqs1}
\left\{ 
\begin{array}{ll} 
D_t\varphi=0,\\
\varphi(0,x)=\varphi_0(x).
\end{array} \right.    
     \end{equation}
then by Lemma \ref{lempro}
$$X(t,x)=\nabla^\perp\varphi(t,x). 
$$
Define the cut-off function 
\begin{equation}\label{hamil1}
 \chi(t,x)=H(\varphi(t,x)).
\end{equation}
  Then  it is easy seen that,  \begin{equation}\label{Eq4}
\left\{ 
\begin{array}{ll} 
D_t\chi=0,\\
\chi(0,x)=\chi_0(x).
\end{array} \right. 
     \end{equation}
%    Now since $\partial_{X(t)}$ and $D_t$ commute according to Proposition \ref{com34}, we get
%     $$
%     D_t\partial_{X(t)}\chi(t)=0,\quad \partial_{X_0}\chi_0=0.
%     $$
%     Thus we get
%     \begin{equation}\label{hamil22}
%     \partial_{X(t)}\chi(t)=0.
%     \end{equation}
     We shall now prove the following  assertion,
     \begin{equation}\label{Eq5}
\forall x\in\RR^2,\quad 1-\chi(t,x)\neq0\Longrightarrow |X(t,x)|\geq\delta e^{-CV(t)}.
\end{equation}
Indeed, set $Y(t,x)=X(t,\psi(t,x))$ where $\psi$ is the flow associated to the vector field $v.$ Then
\begin{eqnarray*}
\partial_tY(t,x)&=&(D_tX)(t,\psi(t,x))\\
&=&Y(t,x)\cdot\{(\nabla v)(t,\psi(t,x))\}
\end{eqnarray*}
 and  we get by Gronwall inequality
 $$
 |Y(t,x)|\le|X_0(x)| e^{C\int_0^t\|\nabla v(\tau)\|_{L^\infty}d\tau}.
 $$ 
 Combining this estimate with the reversibility of the equation gives 
 \begin{eqnarray*}
 |X_0(x)|&\le&  |Y(t,x)|e^{C\int_0^t\|\nabla v(\tau)\|_{L^\infty}d\tau}, \end{eqnarray*} 
which means that
 $$
 |{X}(t,x)|\geq |X_0(\psi^{-1}(t,x))|e^{-CV(t)}.
 $$
 Consequently,  by \eqref{Eq3}  one gets
 \begin{equation*}
\forall x\in\RR^2,\quad 1-\chi_0(\psi^{-1}(t,x))\neq0\Longrightarrow |X(t,x)|\geq\delta\,e^{-CV(t)}.
\end{equation*}
Since $\chi(t,x)=\chi_0(\psi^{-1}(t,x)$ then the proof of \eqref{Eq5} is now complete.
The next step is to estimate the regularity of the solutions far from the boundary. To do so we start with the following   notations
$$
f^-(t,x)=f(t,x) \chi(t,x),\quad f^+(t,x)=f(t,x) (1-\chi(t,x)).
$$
Combining \eqref{E} and \eqref{Eq4}, we find that $\omega^-$ satisfies the equation
$$
D_t\omega^-=\chi b\cdot\nabla j=b\cdot\nabla j^--j\, b\cdot\nabla\chi.
$$
According to  \eqref{eqde} we get
$$
b(t,x)=\nabla^\perp\{G(\varphi(t,x))\}.
$$  
which yields in view of  \eqref{hamil1} to
\begin{eqnarray*}
{b}(t,x)\cdot\nabla\chi(t,x)&=&0.
\end{eqnarray*}
 Therefore  the equation of $\omega^-$ becomes
\begin{equation}\label{hmi9}
D_t\omega^-=b\cdot\nabla j^-.
\end{equation}
By the same way we can establish that
\begin{equation}\label{hmi6}
D_t j^-=b\cdot\nabla j^-+2\chi\big(\partial_1b\cdot\nabla v^2-\partial_2 b\cdot\nabla v^1\big). 
\end{equation}
  Since $b=b^++b^-$ then    the last term can be decomposed  as follows
     
    \begin{eqnarray}\label{hmid7}
   \nonumber \chi\big(\partial_1b\cdot\nabla v^2-\partial_2 b\cdot\nabla v^1\big)&=&\chi\big(\partial_1b^+\cdot\nabla v^2-\partial_2 b^+\cdot\nabla v^1\big)\\
   \nonumber &+&\chi\big(\partial_1b^-\cdot\nabla v^2-\partial_2 b^-\cdot\nabla v^1\big)\\
    &\triangleq&\hbox{I}+\hbox{II}.
   \end{eqnarray}  
   Straightforward  computations give for the first term
    \begin{eqnarray*}
     \hbox{I}&=&\chi(1-\chi)\big(\partial_1b\cdot\nabla v^2-\partial_2 b\cdot\nabla v^1\big)\\
     &-&\chi\big(\partial_1\chi\,b\cdot\nabla v^2-\partial_2\chi\, b\cdot\nabla v^1\big)\\
     &\triangleq& \hbox{I}_1+\hbox{I}_2.
     \end{eqnarray*}
     To estimate the term $\hbox{I}_1$ we shall use the identity \eqref{Iden1}, 
     
       \begin{eqnarray}\label{Iden3}
     \nonumber  \hbox{I}_1&=&2\chi\frac{1-\chi}{|X|^2}\Big\{\partial_X b^1\,\partial_X v^2-\partial_X b^2\,\partial_X v^1\Big\}\\
  &+&\frac{1-\chi}{|X|^2}\Big\{j^-\, X\cdot\partial_X v -\omega^-\, X\cdot \partial_X b\Big\}.
     \end{eqnarray}
     
     Using  \eqref{Eq5} and the algebra structure of  $C^\varepsilon$ for $\EE>0,$ we get

     \begin{eqnarray}\label{Eq6}
    \nonumber \|\hbox{I}_1(t)\|_{C^{1-\frac2p}}&\le&Ce^{CV(t)}\big(1+\|\chi(t)\|_{C^{1-\frac2p}}^2\big)\|X(t)\|_{C^{1-\frac2p}}\|\partial_X v(t)\|_{C^{1-\frac2p}}|\partial_X b(t)\|_{C^{1-\frac2p}}\\
     \nonumber&+&Ce^{CV(t)}\|\chi(t)\|_{C^{1-\frac2p}}\big(1+\|X(t)\|_{C^{1-\frac2p}}^2\big)\Big\{\|j^-(t)\|_{C^{1-\frac2p}}\|\partial_X v(t)\|_{C^{1-\frac2p}}\\
     &&\qquad\qquad\qquad \qquad\qquad\qquad\qquad+\|\omega^-(t)\|_{C^{1-\frac2p}}\|\partial_X b(t)\|_{C^{1-\frac2p}}\Big\}.
          \end{eqnarray}
          
 To estimate $\|\chi\|_{C^{1-\frac2p}}$  , we apply  Proposition \ref{l555} to the equation  \eqref{Eq4},
 
\begin{eqnarray}\label{ham2}
\nonumber \|\chi(t)\|_{C^{1-\frac2p}}&\le& 
  C  \|\chi_0\|_{C^{1-\frac2p}}e^{CV(t)}\\
  &\le& C_0 e^{CtW(t)}.
 \end{eqnarray}
 
Combining Proposition \ref{prop1} with  \eqref{Eq6} and \eqref{ham2}  we get for $t\in[0,1]$
\begin{eqnarray}\label{hmid4}
\nonumber\|\hbox{I}_1(t)\|_{C^{1-\frac2p}}&\le& C_0(1+tW^2(t))e^{\exp{C_0 tW(t)}} \Big(1+\|j^-(t)\|_{C^{1-\frac2p}}+\|\omega^-(t)\|_{C^{1-\frac2p}}\Big)\\
&\le&C_0 e^{\exp{C_0 tW(t)}} \big(1+W(t)\big)\Big(1+\|j^-(t)\|_{C^{1-\frac2p}}+\|\omega^-(t)\|_{C^{1-\frac2p}}\Big).
\end{eqnarray}

The term $\hbox{I}_2$ can be estimated as follows,
          \begin{eqnarray}\label{Eq7}
     \|\hbox{I}_2(t)\|_{C^{1-\frac2p}}&\le&C\|\nabla\chi(t)\|_{C^{1-\frac2p}}\|b(t)\|_{C^{1-\frac2p}}\|\chi\nabla v(t)\|_{C^{1-\frac2p}}.
          \end{eqnarray}
For the last term of the right-hand side we write,
\begin{eqnarray*}
\chi\partial_i v&=&\chi\partial_i\nabla^\perp\Delta^{-1}\omega\\
&=&\partial_i\nabla^\perp\Delta^{-1}(\omega^-)-\Big[\partial_i\nabla^\perp\Delta^{-1},\chi\Big]\omega.
\end{eqnarray*}
The first term can be treated by using Bernstein inequality  leading  to
\begin{eqnarray*}
\|\partial_i\nabla^\perp\Delta^{-1}(\omega^-)\|_{C^{1-\frac2p}}&\lesssim&\|\Delta_{-1}\partial_i\nabla^\perp\Delta^{-1}(\omega^-)\|_{L^\infty}+\|\omega^-\|_{C^{1-\frac2p}}\\&\lesssim&\|\omega^-\|_{L^p}+\|\omega^-\|_{C^{1-\frac2p}}\\
&\lesssim&\|\omega\|_{L^p}+\|\omega^-\|_{C^{1-\frac2p}}.
\end{eqnarray*}
Using Proposition \ref{a priori} we find
$$
\|\partial_i\nabla^\perp\Delta^{-1}\omega^-(t)\|_{C^{1-\frac2p}}\le C_0e^{CtW(t)}+\|\omega^-(t)\|_{C^{1-\frac2p}}.
$$
As to the commutator term we use Lemma \ref{cald},
$$
\Big\|\big[\partial_i\nabla^\perp\Delta^{-1},\chi\big]\omega\Big\|_{C^{1-\frac2p}}\lesssim\|\chi\|_{\textnormal{Lip}}\|\omega\|_{L^p}.
$$
Since $\chi$ is transported by the flow then
\begin{eqnarray*}
\|\chi(t)\|_{\textnormal{Lip}}&\le& C\| \chi_0\|_{\textnormal{Lip}}e^{C\|\nabla v\|_{L^1_tL^\infty}}\\
&\le& C_0e^{CtW(t)}.
\end{eqnarray*}
Hence we find using once again Proposition \ref{a priori}
$$
\Big\|\big[\partial_i\nabla^\perp\Delta^{-1},\chi\big]\omega(t)\Big\|_{C^{1-\frac2p}}\le C_0e^{CtW(t)}.
$$
Putting together the preceding estimates  gives
\begin{equation}\label{ham243}
\|\chi\nabla v(t)\|_{C^{1-\frac2p}}\le C_0e^{CtW(t)}+\|\omega^-(t)\|_{C^{1-\frac2p}}.
\end{equation} 
Coming back to \eqref{Eq7}, then it remains to estimate $\|b(t)\|_{C^{1-\frac2p}}$ and $\|\nabla\chi(t)\|_{C^{1-\frac2p}}.$ The first term is estimated as follows,
\begin{eqnarray}\label{Eq8}
\nonumber\|b(t)\|_{C^{1-\frac2p}}&\lesssim& \| b(t)\|_{L^\infty}+ W(t)\\
\nonumber&\lesssim& \|b_0\|_{L^\infty} e^{C\|\nabla v\|_{L^1_t L^\infty}}+W(t)\\
&\lesssim& C_0e^{CtW(t)}+W(t).
\end{eqnarray}
Concerning the second one $\|\nabla \chi(t)\|_{C^{1-\frac2p}}$ recall that $\nabla^\perp\varphi(t)=X(t)$ and $\chi(t,x)=H(\varphi(t,x)$ which imply
         \begin{eqnarray}
          \nonumber\|\nabla^\perp\chi(t)\|_{C^{1-\frac2p}}&=& \|H^\prime(\varphi(t))\,X(t)\|_{C^{1-\frac2p}}\\
          \nonumber&\le&\|H^\prime(\varphi(t))\|_{C^{1-\frac2p}}\|X(t)\|_{C^{1-\frac2p}}.
                 \end{eqnarray}
         Now we use the classical composition law
         $$
         \|H^\prime(\varphi(t))\|_{C^{1-\frac2p}}\lesssim\|H^\prime\|_{W^{1,\infty}}\big(1+\|\varphi(t)\|_{C^{1-\frac2p}}\big)
         $$      
         which gives according to Proposition \ref{a priori} and Proposition \ref{prop1} that for $t\in[0,1]$, 
                 \begin{eqnarray}\label{Eq10}
                 \|\nabla^\perp \chi(t)\|_{C^{1-\frac2p}}&\le& C_0\big(1+tW^2(t)\big)e^{\exp{C_0tW(t)}}\\
                \nonumber &\le&C_0 (1+W(t))e^{\exp{C_0tW(t)}}.
                 \end{eqnarray}
          Putting together \eqref{Eq7},\eqref{ham243}, \eqref{Eq8} and \eqref{Eq10} we obtain
         \begin{equation}\label{Eqd}
         \|\hbox{I}_2(t)\|_{C^{1-\frac2p}}\le  C_0 \big[1+W^2(t)\big] e^{\exp{C_0tW(t)}}\big(1+\|\omega^-(t)\|_{C^{1-\frac2p}}\big)
         \end{equation}
                          
                 Combining this estimate with \eqref{hmid4} we get
                  \begin{equation}\label{zz1}
                 \|\hbox{I}(t)\|_{C^{1-\frac2p}}\le  C_0\big[1+ W^2(t)\big] e^{\exp{C_0tW(t)}}\Big(1+\|\omega^-(t)\|_{C^{1-\frac2p}}+\|j^-(t)\|_{C^{1-\frac2p}}\Big).
                 \end{equation}
                 Coming back to the estimate of the second term  $\hbox{II}$ of \eqref{hmid7}. From the algebra structure of $C^{1-\frac2p}$,
                 $$
                   \|\hbox{II}(t)\|_{C^{1-\frac2p}}\lesssim \|\nabla b^-(t)\|_{C^{1-\frac2p}}\|\chi\nabla v(t)\|_{C^{1-\frac2p}}.
                 $$
                 Therefore we get according to \eqref{ham243}
                 $$
                   \|\hbox{II}(t)\|_{C^{1-\frac2p}}\lesssim \Big(C_0 e^{CtW(t)}+\|\omega^-(t)\|_{C^{1-\frac2p}}\Big)\|\nabla b^-(t)\|_{C^{1-\frac2p}}.
                                    $$
         The last term will be estimated as follows,
         \begin{eqnarray*}
         \|\partial_i b^-\|_{C^{1-\frac2p}}&\le& \|\chi\partial_i b\|_{C^{1-\frac2p}}+\|b\partial_i\chi\|_{C^{1-\frac2p}}\\
         &\le& \|\chi\partial_i b\|_{C^{1-\frac2p}}+\|b\|_{C^{1-\frac2p}}\|\nabla\chi\|_{C^{1-\frac2p}}
         \end{eqnarray*}
                     Using \eqref{Eq8} and  \eqref{Eq10}  we obtain
                    \begin{eqnarray*}
         \|\partial_i b^-(t)\|_{C^{1-\frac2p}}\le  \|\chi(t)\partial_i b(t)\|_{C^{1-\frac2p}}+C_0\big[1+W^2(t)\big] e^{\exp{C_0tW(t)}}.
         \end{eqnarray*}
       Concerning the estimate of the first term of the right-hand side we imitate the same computation of \eqref {ham243}       
       $$
       \|\chi\partial_i b\|_{C^{1-\frac2p}}\le  C_0e^{CtW(t)}+\|j^-\|_{C^{1-\frac2p}}.
       $$ 
              It follows that
              $$
               \|\partial_i b^-\|_{C^{1-\frac2p}}\le  \|j^-\|_{C^{1-\frac2p}}+C_0\big[1+W^2(t)\big] e^{\exp{C_0tW(t)}}.
              $$
              Putting together the previous estimates
              $$
               \|\hbox{II}\|_{C^{1-\frac2p}}\le C_0\Big(1+\|\omega^-(t)\|_{C^{1-\frac2p}}\Big)\Big(1+ \|j^-\|_{C^{1-\frac2p}}\Big)\big[1+W^2(t)\big] e^{\exp{C_0tW(t)}}.
              $$
              Inserting this estimate and  \eqref{zz1} into \eqref{hmid7} gives
                      $$
           \|\chi(t)\big(\partial_1b\cdot\nabla v^2-\partial_2 b\cdot\nabla v^1\big)(t)\|_{C^{1-\frac2p}}\le C_0\Big(1+\|\omega^-(t)\|_{C^{1-\frac2p}}\Big)\Big( \|j^-\|_{C^{1-\frac2p}}+1\Big)\big[1+W^2(t)\big]  e^{\exp{C_0tW(t)}}.
                      $$   
           Set 
           $$g(t)\triangleq  \|\omega^-(t)\|_{C^{1-\frac2p}}+\|j^-(t)\|_{C^{1-\frac2p}},
           $$
           then applying Proposition \ref{transport} to the system \eqref{hmi9} and \eqref{hmi6}  we get for $t\in [0,1],$
           $$
         g(t)\le C_0e^{\exp{C_0tW(t)}}\big[1+tW^2(t)\big]+C_0e^{\exp{C_0tW(t)}}\int_0^tg^2(\tau)\big[1+W^2(\tau)\big]d\tau.
           $$
           It follows that for small time $t\in[0,T]$ such that
           \begin{equation}\label{cond1}
           4C_0^2Te^{\exp{C_0TW(T)}}\big[1+W^2(T)\big]^2\le1
           \end{equation}
           we get 
           $$
            g(t)\le 2C_0e^{\exp{C_0tW(t)}}\big[1+tW^2(t)\big].
           $$
           This completes the proof of the proposition. We point out that as a by-product of \eqref{ham243} one obtains 
           
           \begin{equation}\label{ko1}
           \|\chi(t)\nabla v(t)\|_{C^{1-\frac2p}}+ \|\chi(t)\nabla b(t)\|_{C^{1-\frac2p}}\le  C_0e^{\exp{C_0tW(t)}}\big[1+tW^2(t)\big].
           \end{equation}
           \end{proof}
           \subsection{Proof of Theorem \ref{thm2}}
           We shall now discuss the proof of Theorem \ref{thm2}. We first establish the suitable a priori estimates and second we sketch  the principal ingredients for the construction of the solution in our context. We end with the uniqueness part.
           \begin{proof}
           We shall start with the local a priori estimates.
           
           $\bullet$ {\it Local a priori estimates.}
           \vspace{0,3cm}
           
            We assume that the system \eqref{MHD} admits a smooth solution and we wish to find some a priori estimates. The crucial quantities for the persistence of the regularity are   the Lipschitz norms of the velocity and the magnetic field. 
           To estimate the Lipschitz norm of the velocity we shall use \eqref{ko1}. Then under the \mbox{assumption \eqref{cond1}}
           \begin{eqnarray*}
           \|\nabla v(t)\|_{L^\infty}&\le& \|\chi(t)\nabla v(t)\|_{L^\infty}+\|(1-\chi(t))\nabla v(t)\|_{L^\infty}\\
           &\le&\|\chi(t)\nabla v(t)\|_{C^{1-\frac2p}}+\|(1-\chi(t))\nabla v(t)\|_{L^\infty}\\
           &\le&  C_0e^{\exp{C_0tW(t)}}\big[1+tW^2(t)\big]+\|(1-\chi(t))\nabla v(t)\|_{L^\infty}
           \end{eqnarray*}
           To estimate the last term we shall use the identity \eqref{vort},
          \begin{eqnarray*}
           \|(1-\chi(t))\nabla v(t)\|_{L^\infty}&\le& \Big{\|}\frac{1-\chi(t)}{|X(t)|^2}\Big{\|}_{L^\infty}\Big(\|X(t)\|_{L^\infty}\|\partial_X v(t)\|_{L^\infty}+\|X(t)\|_{L^\infty}^2\|\omega(t)\|_{L^\infty}\Big)\\
           &\le&
           \Big{\|}\frac{1-\chi(t)}{|X(t)|^2}\Big{\|}_{L^\infty}\Big(\|X(t)\|_{L^\infty}\|\partial_X v(t)\|_{C^{1-\frac2p}}+\|X(t)\|_{L^\infty}^2\|\omega(t)\|_{L^\infty}\Big).
           \end{eqnarray*}
           Using Proposition \ref{prop1} combined with  \eqref{inf1},  \eqref{Eq5} and Proposition \ref{a priori} 
           \begin{eqnarray*}
             \|(1-\chi(t))\nabla v(t)\|_{L^\infty}&\le& C_0\big[1+tW^2(t)\big]e^{\exp{C_0 tW(t)}}\Big(1+\int_0^t\|\nabla v(\tau)\|_{L^\infty}\|\nabla b(\tau)\|_{L^\infty} d\tau\Big)\\
             &\le& C_0\big[1+tW^2(t)\big]^2e^{\exp{C_0 tW(t)}}.
           \end{eqnarray*}
           Consequently  we obtain
           $$
             \|\nabla v(t)\|_{L^\infty}\le C_0\big[1+tW^2(t)\big]^2e^{\exp{C_0 tW(t)}}.
             $$
      In a similar way we get for the magnetic field
     $$
             \|\nabla b(t)\|_{L^\infty}\le C_0\big[1+tW^2(t)\big]^2e^{\exp{C_0 tW(t)}}.
             $$    
             Thus we find under the assumption \eqref{cond1}: 
             $$
            \forall t\in [0,T],\quad W(t)\le C_0\big[1+tW^2(t)\big]^2e^{\exp{C_0 tW(t)}}.
             $$  
             The goal is to find a suitable time existence $T=T(C_0)>0$ subject to the above constraints. We shall look for small $T$ such that
             $
             W(T)< 2 eC_0. 
             $  This holds true whenever 
             $$
             \big(1+4e^2C_0^2\, T\big)^2e^{\exp{2eC_0^2 T}}< 2e.
             $$

           The existence of such $T$  follows  from the continuity in time of left-hand side and the fact that the previous inequality is strict for $T=0.$ It remains to check the condition \eqref{cond1}. This is true if
           \begin{equation}\label{condp}
          4C_0^2\big[1+4e^2 C_0^2\big]^2 Te^{\exp{ 2eC_0^2T}}\leq1.
           \end{equation}
           To guarantee this last condition we take $T$ sufficiently small. Under this assumption we see from the previous computations in the last sections that $\omega(t), j(t)\in W^p_{X(t)}, \quad\forall t\in [0,T].$ Moreover the vector fields $v, b$ and $X$ belong to $L^\infty([0,T]; W^{1,\infty}).$ To achieve the a priori estimates of Theorem \ref{thm2} it remains to check that $\partial_{X_0}\psi(t)\in L^\infty([0,T]; W^{1,\infty}).$ For this aim we use the identity
           $$
           \partial_{X_0}\psi(t)=X_t\circ\psi(t).
           $$
           It suffices now to use the fact that $X_t$ and $\psi(t)$ belong both to the Lipschitz class $W^{1,\infty}.$
           \vspace{0,2cm}
           
              $\bullet$ {\it Existence and smoothing procedure}. 
              \vspace{0,3cm}

              To justify rigorously the previous a priori estimates and construct a solution as claimed in \mbox{Theorem \ref{thm2}} we start with  smoothing out the initial data as follows
              $$
              v_0^n=v_0\star \eta_n, \quad b_0^n=\nabla^\perp\{G(\varphi_n)\},\quad X_0^n=X_0\star\eta_n, \quad\varphi_n=\varphi\star\eta_n
              $$ 
              where $\eta_n(x)=n^2\eta(nx)$ is a standard mollifier. From the assumptions we can easily check that for any  $n,$ 
              $$
              v_0^n, b_0^n\in C^{1+\alpha}
              $$
              for any $\alpha\in (0,1).$ Consequently  we can apply the classical theory which ensures for each $n$ the existence and the uniqueness  of local   solution $(v^n, b^n)$   defined on some interval $[0, T_n]$ and with values in $C^{1+\alpha}$. We shall prove that $\inf_n T_n\geq T>0$ where $T$ is defined  in  \eqref{condp} but this does not mean that the bounds are uniform in the classical space $C^{1+\alpha}$. The uniformness  in tho space is false but it will be proven in the space of the initial data.  Indeed, it suffices to show that the smooth family $(v_0^n, b_0^n)$ satisfies the assumptions of Theorem \ref{thm2} with uniform bounds with respect to $n.$ First we intend to check the first assumption , that is, 
              $$
              \partial_{X_0^n}\omega_0^n,\quad \partial_{X_0^n} j_0^n\in L^p
              $$
              with uniform bounds. 
              First observe that   the vorticity $\omega_0^n$ of $v_0^n$ is given by $ \omega_0^n=\omega_0\star\rho_n$ and
              $$
               \partial_{X_0^n}\omega_0^n(x)= \partial_{X_0^n-X_0}\omega_0^n(x)+ \partial_{X_0}\omega_0^n(x).
              $$
              The first term can be estimated in a classical way as follows
              \begin{eqnarray*}
              \|\partial_{X_0^n-X_0}\omega_0^n\|_{L^p}&\le& \|X_0^n-X_0\|_{L^\infty}\|\nabla \omega_0^n\|_{L^p}\\
              &\le&\|\nabla X_0\|_{L^\infty} \||\cdot|\eta_n\|_{L^1}\|\omega_0\|_{L^p}\|\nabla \eta_n\|_{L^1}\\
              &\lesssim&\|\nabla X_0\|_{L^\infty} \|\omega_0\|_{L^p}.
              \end{eqnarray*}
          As regards the second term we write
              
              \begin{eqnarray*}
               \partial_{X_0}\omega_0^n(x)&=&\eta_n\star(\partial_{X_0}\omega_0)(x)+n^2\int_{\RR^2}[X_0(x)-X_0(y)]\cdot\nabla_y\omega_0(y)\eta(n(x-y))dy\\
               &\triangleq& \hbox{I}_n+\hbox{II}_n.
              \end{eqnarray*}
        Using the convolution inequalities we obtain
        $$
        \|\hbox{I}_n\|_{L^p}\lesssim \|\partial_{X_0}\omega_0\|_{L^p}.
        $$
        Integration by parts combined with the incompressibility of $X_0$ yields 
        $$
        \hbox{II}_n(x)=n^3\int_{\RR^2}[X_0(x)-X_0(y)]\cdot(\nabla_y\eta)(n(x-y))\omega_0(y)dy.
        $$
      Thus we get
      $$
      |\hbox{II}_n(x)|\le\|\nabla X_0\|_{L^\infty}n^3\int_{\RR^2}|x-y||(\nabla_y\eta)(n(x-y))||\omega_0(y)|dy.
      $$     
      From the classical convolution laws one gets
      $$
      \|\hbox{II}_n\|_{L^p}\le \|\nabla X_0\|_{L^\infty} \||\cdot|\nabla\eta\|_{L^1}\|\omega_0\|_{L^p}.
      $$ 
      This achieves the proof of the first assumption of Theorem \ref{thm2}.
      
      Let us now check the second assumption of this theorem. We shall show that
      $$
      \sup_{n\in \NN^\star}\|(1-\rho)\omega_0^n\|_{C^{1-\frac2p}}+ \sup_{n\in \NN^\star}\|(1-\rho)j_0^n\|_{C^{1-\frac2p}}<\infty.
      $$
      We point out that in the application the function $1-\rho$ is closely related to the function $\chi_0$ introduced in \eqref{def9g} and this latter one belongs to $W^{2,\infty}.$ Thus the function $\rho$ should belong to $W^{2,\infty}$ and not more. 
      We write
      \begin{eqnarray*}
      (1-\rho(x))\omega_0^n(x)&=&\eta_n\star[(1-\rho)\omega_0](x)+\int_{\RR^2}[\rho(x)-\rho(y)]\omega_0(y)\eta_n(x-y)dy\\
      &\triangleq&\mathcal{I}_1(x)+\mathcal{I}_2(x)
      \end{eqnarray*}
      From the classical convolution inequalities one gets
      $$
      \|\mathcal{I}_1\|_{C^{1-\frac2p}}\lesssim \|(1-\rho)\omega_0\|_{C^{1-\frac2p}}.
      $$
      For the second term we claim that
      $$
       \|\mathcal{I}_2\|_{W^{1,\infty}}\lesssim \|\omega_0\|_{L^\infty}.
      $$
      Indeed, the uniform boundedness is easy to get. Concerning the Lipschitz norm we write
      $$
      \nabla \mathcal{I}_2(x)=\nabla \rho(x)\,\eta_n\star\omega_0(x)+ n^3 \int_{\RR^2}[\rho(x)-\rho(y)]\omega_0(y)(\nabla\eta)\big( n(x-y)\big)dy.
    $$
      Consequently we find
        \begin{eqnarray*}
      \|\nabla \mathcal{I}_2\|_{L^\infty}&\le &\|\nabla\rho\|_{L^\infty}\|\omega_0\|_{L^\infty}\|\eta_n\|_{L^1}+\|\nabla\rho\|_{L^\infty}\|\omega_0\|_{L^\infty}\||\cdot|\nabla \eta\|_{L^1}\\
      &\lesssim&\|\omega_0\|_{L^\infty}.
        \end{eqnarray*}
        Concerning the uniform estimate of $\|(1-\rho)j_0^n\|_{C^{1-\frac2p}}$ it suffices to bound $(1-\rho) b_0^n$ in the H\"{o}lder  space $C^{2-\frac2p}.$ For this purpose we write by the definition
        $$
        (1-\rho) b_0^n=G^\prime(\varphi_n) [(1-\rho) \nabla^\perp\varphi_n].
        $$
      Using the algebra structure of $W^{2-\frac2p}$ yields
      $$
      \| (1-\rho) b_0^n\|_{C^{2-\frac2p}}\lesssim \|G^\prime(\varphi_n)\|_{C^{2-\frac2p}}\big\| [(1-\rho) \nabla^\perp\varphi_n]\big\|_{C^{2-\frac2p}}.
      $$  
     From the classical law products one obtains
     $$
     \|G^\prime(\varphi_n)\|_{C^{2-\frac2p}}\lesssim  \|G^\prime\|_{W^{2,\infty}}\|\varphi_n\|_{C^{2-\frac2p}}.
     $$    
     Combining this inequality with the convolution laws
     $$
     \|\varphi_n\|_{C^{2-\frac2p}}\lesssim \|\varphi\|_{C^{2-\frac2p}}
     $$
     allows to get
     $$
     \|G^\prime(\varphi_n)\|_{C^{2-\frac2p}}\lesssim  \|G^\prime\|_{W^{2,\infty}}\|\varphi\|_{C^{2-\frac2p}}.
     $$
     On the other hand we have
     \begin{eqnarray*}
     \big\| [(1-\rho) \nabla^\perp\varphi_n]\big\|_{C^{2-\frac2p}}&\lesssim&  \big\| [(1-\rho) \nabla^\perp\varphi_n]\big\|_{L^\infty}+ \big\| \nabla \rho \nabla^\perp \varphi_n\big\|_{W^{1,\infty}}+ \big\| (1-\rho) \nabla^\perp\nabla \varphi_n\big\|_{C^{1-\frac2p}}\\
     &\lesssim&\|\rho\|_{W^{2,\infty}}\|\varphi_n\|_{W^{2,\infty}}+\big\|( \hbox{Id}-\Delta_{-1})[(1-\rho) \nabla^\perp\nabla \varphi_n]\big\|_{C^{1-\frac2p}}\\
     &\lesssim&\|\rho\|_{W^{2,\infty}}\|\varphi\|_{W^{2,\infty}}+\big\|( \hbox{Id}-\Delta_{-1})[(1-\rho) \nabla^\perp\nabla \varphi_n]\big\|_{C^{1-\frac2p}}.
     \end{eqnarray*}
  As to the last term we transform it into
  \begin{eqnarray*}
   (1-\rho) \nabla^\perp\nabla \varphi_n&=&\{\nabla[(1-\rho) \nabla^\perp \varphi]\}\star\eta_n+\int_{\RR^2}[\rho(y)-\rho(x)]\nabla^\perp \varphi(y)\nabla\eta_n(x-y)dy\\
       &\triangleq& \mathcal{J}_1+\mathcal{J}_2.
  \end{eqnarray*}
  Using once again the convolution inequalities we find
  $$
  \big\|\mathcal{J}_1\big\|_{C^{1-\frac2p}}\lesssim \|(1-\rho) \nabla^\perp \varphi\|_{C^{2-\frac2p}}.
  $$
  For the term $\mathcal{J}_2$ we write
  $$
  \|( \hbox{Id}-\Delta_{-1})\mathcal{J}_2\|_{C^{1-\frac2p}}\lesssim \|\nabla \mathcal{J}_2\|_{L^\infty}.
  $$
 It is easy to check that
 $$
 \partial_i \mathcal{J}_2=(\partial_i\rho\nabla^\perp\varphi)\star\nabla\eta_n-\partial_i\rho(x) \nabla^\perp\varphi\star\nabla\eta_n+\int_{\RR^2}[\rho(y)-\rho(x)]\partial_i\nabla^\perp \varphi(y)\nabla\eta_n(x-y)dy.
 $$ 
  The first two terms of the right-hand side can be estimated as follows
  \begin{eqnarray*}
  \|(\partial_i\rho\nabla^\perp\varphi)\star\nabla\eta_n-\partial_i\rho(x) \nabla^\perp\varphi\star\nabla\eta_n\|_{L^\infty}&\lesssim& \|\nabla(\partial_i\rho\nabla^\perp\varphi)\|_{L^\infty}+\|\nabla\rho\|_{L^\infty}\|\nabla\nabla^\perp\varphi\|_{L^\infty}\\
  &\lesssim&\|\rho\|_{W^{2,\infty}}\|\varphi\|_{W^{2,\infty}}.
  \end{eqnarray*}
 Concerning the last term we write
 \begin{eqnarray*}
 \Big|\int_{\RR^2}[\rho(y)-\rho(x)]\partial_i\nabla^\perp \varphi(y)\nabla\eta_n(x-y)dy
\Big|&\le&\|\nabla\rho\|_{L^\infty} \|\nabla\nabla^\perp\varphi\|_{L^\infty}\int_{\RR^2}|y-x||\nabla\eta_n(x-y)|dy\\
&\lesssim&\|\nabla\rho\|_{L^\infty}\|\varphi\|_{W^{2,\infty}}. 
 \end{eqnarray*} 
  Therefore we obtain
  $$
  \|\mathcal{J}_2\|_{L^\infty}\lesssim \|\rho\|_{W^{2,\infty}}\|\varphi\|_{W^{2,\infty}}.
  $$
  Putting together the preceding estimates allows to get the uniform estimate
  $$
  \sup_{n\in \mathbb{N}^\star} \| (1-\rho) j_0^n\|_{C^{1-\frac2p}}<\infty.
  $$
             It remains to check the assumption \eqref{separa} uniformly with respect to $n.$ This condition should  be a little bit clarified since the singular support of $(\omega_0^n, j_0^n)$ is smoothed out. We replace  in  \eqref{separa} the set $\Sigma_{sing}^{1-\frac2p}(\omega_0^n, j_0^n)$ by $\tilde\Sigma_{sing}^{1-\frac2p}$ defined as follows: we say that $x\notin \tilde\Sigma_{sing}^{1-\frac2p}$ if and only if there exists a smooth compactly supported  function $\chi$ with $\chi(x_0)=1$ such that 
        $\chi \omega_0^n$ and $\chi j_0^n$ belong to $C^{1-\frac2p}$ uniformly \mbox{on $n.$} Performing straightforward calculations one can prove  that 
        $$
        \tilde\Sigma_{sing}^{1-\frac2p}=  \Sigma_{sing}^{1-\frac2p}(\omega_0, j_0).
        $$
    Now since $\{\varphi_n\} $ converges uniformly to $\varphi$   we  can easily  see that  the assumption \eqref{separa}  is satisfied for sufficiently large values of   $n.$
    This achieves the fact that the family $\{v_0^n, b_0^n\}$ is smooth and satisfies the assumptions $(1)-(2)-(3)$ of Theorem \ref{thm2} uniformly with respect to $n.$
    \vspace{0,2cm}
    
    $\bullet$ {\it Uniqueness part}.
 \vspace{0,3cm}
    
    Let $\{(v_i, b_i, p_i), i=1,2\}$ be  two solutions of the system \eqref{MHD} with the same initial data $(v_0, b_0)$ and belonging to  the space $L^\infty_T W^{1,\infty}$ such that $(\omega_i, j_i)\in L^\infty _T(L^1\cap L^\infty)$. We set $v\triangleq v_1-v_2,b\triangleq b_1-b_2$ and $p=p_1-p_2.$ It is known that in general the velocity does not belong to $L^2$ when its vorticity is only bounded and integrable but belongs to $L^p,\,\forall p>2.$ However by reproducing the arguments developed in \cite{che1} we can show the existence of two vector fields $\sigma_1$ and $\sigma_2$ solutions of  the stationary Euler equations and satisfying in addition $\sigma_i\in C^\infty_b$ and $\nabla \sigma_i \in H^s,\forall s\in \RR,$ such that the solutions $v_i$ and $b_i$ constructed in the previous step  belong to $\sigma_1+L^2,\,\sigma_2+L^2$, respectively.   Therefore and in order to give a simple proof for the uniqueness part  we shall assume that $\sigma_i\equiv 0.$  
    
     It is easy to check that $(v,b)$ satisfies the following equations
     \begin{equation}
\label{E0Z}
 \left\{ 
\begin{array}{ll} 
\partial_t v+v_1\cdot\nabla v +\nabla p=b_1\cdot\nabla b-v\cdot\nabla v_2+b\cdot\nabla b_2\\
\partial_t b+v_1\cdot\nabla b=b_1\cdot\nabla v-v\cdot\nabla b_2+b\cdot\nabla v_2.
\end{array} \right.    
     \end{equation} 
     Taking the $L^2-$ inner product of the first equation of \eqref{E0Z} with $v$ and of the second equation with $b$ we find after using the incompressibility of the involved vector fields,
     
     $$
     \frac12\frac{d}{dt}\big(\|v(t)\|_{L^2}^2+\|b(t)\|_{L^2}^2\big)=\int_{\RR^2}\big\{(b_1\cdot\nabla b)\cdot v+(b_1\cdot\nabla v)\cdot b\big\} dx+I(t)
     $$
   with
   $$
   I(t)=\int_{\RR^2}\big(-v\cdot\nabla v_2+b\cdot\nabla b_2\big)\cdot v\,dx+\int_{\RR^2}(-v\cdot\nabla b_2+b\cdot\nabla v_2\big)\cdot b\,dx.
   $$
   Integration by parts shows that the first term of the right-hand side vanishes. For the term $I(t)$ one obtains by using successively  H\"{o}lder and Young  inequalities
   $$
  | I(t)|\lesssim  \big(\|v(t)\|_{L^2}^2+\|b(t)\|_{L^2}^2\big)\big(\|\nabla v_2(t)\|_{L^\infty}+\|\nabla b_2(t)\|_{L^\infty}\big).
   $$
  Consequently
   $$
   \frac{d}{dt}\big(\|v(t)\|_{L^2}^2+\|b(t)\|_{L^2}^2\big)\lesssim \big(\|v(t)\|_{L^2}^2+\|b(t)\|_{L^2}^2\big)\big(\|\nabla v_2(t)\|_{L^\infty}+\|\nabla b_2(t)\|_{L^\infty}\big)
   $$
 and thus  the uniqueness  follows  from Gronwall inequality. 
     \end{proof}
     \section{Commutator estimates} 
We shall in this section discuss some commutator estimates  that   most of them  were of great use in the previous sections.  The first one is technical and 
 whose proof can be found for example in \cite{HKR1}.
\begin{lem}\label{CE} Let $(a, b)\in[1, \infty]^2$ such that $a\ge b'$ with $\frac{1}{b}+\frac{1}{b'}=1$. Given $f, g$ and $h$ three functions such that $\nabla f\in L^{a}$, $g\in L^{b}$ and $xh\in L^{b'}$. Then,
$$
\Vert h\star(fg)-f(h\star g)\Vert_{L^a}\lesssim\Vert xh\Vert_{L^{b'}}\Vert\nabla f\Vert_{L^{a}}\Vert g\Vert_{L^{b}}.
$$
\end{lem}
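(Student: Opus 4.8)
The plan is to realise the commutator as an integral against the increment $f(y)-f(x)$, to replace that increment by a line integral of $\nabla f$, and then to extract the three norms by a H\"older estimate followed by Minkowski's integral inequality — \emph{in that order}, the order being the crux of the matter.

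By a standard mollification argument (replace $f$ by $f\star\eta_n$, which is smooth and obeys $\|\nabla(f\star\eta_n)\|_{L^a}\le\|\nabla f\|_{L^a}$, and pass to the limit) we may assume $f$ smooth, so that $f(y)-f(x)=\int_0^1\nabla f\big(x+s(y-x)\big)\cdot(y-x)\,ds$. Writing $K(x):=h\star(fg)(x)-f(x)\,(h\star g)(x)=\int h(x-y)\big(f(y)-f(x)\big)g(y)\,dy$ and substituting $z=x-y$ gives
$$K(x)=-\int_0^1\!\!\int\big(z\,h(z)\big)\cdot\nabla f(x-sz)\,g(x-z)\,dz\,ds=:-\int_0^1\Psi_s(x)\,ds,$$
so it suffices to bound $\|\Psi_s\|_{L^a}$ uniformly for $s\in[0,1]$ and then integrate in $s$.

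For fixed $s$ and $x$ I would apply H\"older's inequality in the variable $z$ with the conjugate pair $(b',b)$, keeping $z\,h(z)$ grouped together with $\nabla f(x-sz)$:
$$|\Psi_s(x)|\le\|g\|_{L^b}\Big(\int|z\,h(z)|^{b'}\,|\nabla f(x-sz)|^{b'}\,dz\Big)^{1/b'}.$$
Now take the $L^a_x$-norm of the right-hand side. Since $a\ge b'$ the exponent $p:=a/b'$ satisfies $p\ge1$, so Minkowski's integral inequality applied to the $z$-integral yields
$$\Big\|\,\Big(\int|z\,h(z)|^{b'}|\nabla f(\cdot-sz)|^{b'}\,dz\Big)^{1/b'}\,\Big\|_{L^a_x}\le\|z\,h\|_{L^{b'}}\,\|\nabla f\|_{L^{pb'}}=\|x\,h\|_{L^{b'}}\,\|\nabla f\|_{L^a},$$
the last equality because $pb'=a$. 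Hence $\|\Psi_s\|_{L^a}\lesssim\|x\,h\|_{L^{b'}}\|\nabla f\|_{L^a}\|g\|_{L^b}$ uniformly in $s$, and $\|K\|_{L^a}\le\int_0^1\|\Psi_s\|_{L^a}\,ds$ gives the claim.

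The main obstacle is precisely this ordering. Taking the $L^a_x$-norm first (via Minkowski) and only afterwards trying H\"older would leave one having to split the product of the two \emph{independent} translates $\nabla f(\cdot-sz)$ and $g(\cdot-z)$ into $L^a$ and $L^b$; but that product only lies in $L^c$ with $1/c=1/a+1/b$, so this route collapses as soon as $0<s<1$. Performing H\"older in $z$ first absorbs the $z$-translate of $g$ immediately, leaving only the harmless $z$-translate of $\nabla f$ to survive into the $x$-integration, and the arithmetic identity $(a/b')\,b'=a$ does the rest; the hypothesis $a\ge b'$ is exactly what makes $p=a/b'\ge1$, as Minkowski's inequality requires. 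The one remaining technical point — validity of the segment formula for merely $W^{1,a}_{\mathrm{loc}}$ functions — is dispatched by the initial mollification.
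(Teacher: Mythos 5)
Your proof is correct; the paper itself does not prove this lemma but simply refers to \cite{HKR1}, and your argument (write the commutator against the increment $f(y)-f(x)$, express the increment as a line integral of $\nabla f$, then H\"older in $z$ with the pair $(b',b)$ followed by Minkowski's integral inequality in $L^{a/b'}$, which is where $a\ge b'$ enters) is exactly the standard proof given there. The mollification remark correctly disposes of the only technical point, the validity of the segment formula for $f$ with merely $\nabla f\in L^a$.
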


Next we intend to recall and precise  a crowd of estimates for some
commutators of  Calder\'on type. First, we denote  by $\mathcal{R}_{ij}$ the iterated Riesz transform
$$
\mathcal{R}_{ij}\triangleq\partial_{i}\partial_j\Delta^{-1}.
$$
This operator acts continuously over Lebesgue spaces $L^p$ for $1<p<\infty$ and has an even kernel which is smooth in  $\RR^2\backslash\{0\}$ and with zero  mean value  on the the unit circle. \begin{lem}\label{cald}
Let $f, g:\RR^2\to \RR$ be two smooth functions. Then the following assertions hold true.
\begin{enumerate}
\item  For $p\in]1,\infty[$ we have
$$
\big{\|}[\mathcal{R}_{ij}, f]\partial_kg\big{\|}_{L^p}\lesssim \|\nabla f\big{\|}_{L^\infty}\|g\|_{L^p}.
$$
\item For $\EE\in]0,1[$ and  $p\geq \frac{2}{1-\EE}$, we get
$$
\big\Vert\big[\mathcal{R}_{ij}, f \big]g\big\Vert_{C^\EE}\lesssim\big(\|f\|_{L^\infty}+\|\nabla f\|_{L^\infty}\big)\Vert g\Vert_{ L^p}.
$$
\end{enumerate}
\end{lem}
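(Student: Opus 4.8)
The plan is to reduce both estimates to a single building block which is itself an immediate consequence of Lemma \ref{CE}. Fix $q\in\NN$; since the spectrum of $S_{q-1}f$ lies in a ball of radius $\lesssim 2^{q}$, both $S_{q-1}f\,\Delta_q u$ and $\Delta_q u$ are spectrally supported in a fixed annulus of size $2^q$, so $\mathcal{R}_{ij}$ acts on each of them by convolution with a kernel $h_q=2^{2q}\Theta(2^q\cdot)$, where $\Theta$ is Schwartz uniformly in $q$ and $\||x|h_q\|_{L^1}\lesssim 2^{-q}$. Writing $[\mathcal{R}_{ij},S_{q-1}f]\Delta_q u=h_q\star(S_{q-1}f\,\Delta_q u)-S_{q-1}f\,(h_q\star\Delta_q u)$ and invoking Lemma \ref{CE} with $b=\infty$, $b'=1$ yields, for every $r\in[1,\infty]$,
\begin{equation}\label{keycom}
\bigl\|[\mathcal{R}_{ij},S_{q-1}f]\Delta_q u\bigr\|_{L^r}\lesssim 2^{-q}\,\|\nabla f\|_{L^\infty}\,\|\Delta_q u\|_{L^r},
\end{equation}
the familiar "gain of one derivative" in the commutator, valid for any even Calder\'on--Zygmund convolution operator smooth off the origin.

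For assertion $(2)$ I would use the Bony decomposition and regroup so that
$$
[\mathcal{R}_{ij},f]g=[\mathcal{R}_{ij},T_f]g+\mathcal{R}_{ij}\!\Bigl(\sum_q\Delta_q f\,S_{q+2}g\Bigr)-\sum_q\Delta_q f\,S_{q+2}\mathcal{R}_{ij}g,\qquad [\mathcal{R}_{ij},T_f]g=\sum_q[\mathcal{R}_{ij},S_{q-1}f]\Delta_q g .
$$
Each summand of the first term is localized in an annulus of size $2^q$, so \eqref{keycom} with $r=\infty$ and Bernstein's inequality $\|\Delta_q g\|_{L^\infty}\lesssim 2^{2q/p}\|\Delta_q g\|_{L^p}$ give
$$
2^{j\EE}\bigl\|\Delta_j[\mathcal{R}_{ij},T_f]g\bigr\|_{L^\infty}\lesssim 2^{j(\EE-1+\frac2p)}\,\|\nabla f\|_{L^\infty}\,\|g\|_{L^p},
$$
which is uniformly bounded precisely when $p\ge\frac2{1-\EE}$ — this is the sole place the hypothesis on $p$ enters. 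The two remaining terms are ordinary paraproducts, controlled in $C^{1-\frac2p}\hookrightarrow C^\EE$ by the same geometric bookkeeping, using $\|\Delta_q f\|_{L^\infty}\lesssim 2^{-q}\|\nabla f\|_{L^\infty}$ for $q\ge0$, $\|\Delta_{-1}f\|_{L^\infty}\lesssim\|f\|_{L^\infty}$, Bernstein, and the frequency-localized boundedness of $\mathcal{R}_{ij}$ ($\|\Delta_j\mathcal{R}_{ij}h\|_{L^\infty}\lesssim\|\Delta_j h\|_{L^\infty}$ for $j\ge0$, and $\|\Delta_{-1}\mathcal{R}_{ij}h\|_{L^\infty}\lesssim\|\mathcal{R}_{ij}h\|_{L^p}\lesssim\|h\|_{L^p}$).

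For assertion $(1)$ the same splitting sends the principal piece $\sum_q[\mathcal{R}_{ij},S_{q-1}f]\Delta_q\partial_k g$ to \eqref{keycom} with $r=p$ (using $\|\Delta_q\partial_k g\|_{L^p}\lesssim 2^q\|\Delta_q g\|_{L^p}$); since these summands are annulus-localized, a Littlewood--Paley square function estimate (Fefferman--Stein) turns the sum into $\lesssim\|\nabla f\|_{L^\infty}\|g\|_{L^p}$. What is left, after transferring $\partial_k$ onto the low-frequency factor, is $\sum_q[\mathcal{R}_{ij},\Delta_q f]\,\partial_k S_{q+2}g$, which is exactly the first Calder\'on commutator: an operator whose kernel is $k_{ij}(x-y)$ weighted by a factor of size $O(\|\nabla f\|_{L^\infty}|x-y|)$, hence of Calder\'on--Zygmund type with constant $\lesssim\|\nabla f\|_{L^\infty}$, and for which the zero mean value of $k_{ij}$ on the unit circle forces $T(1)$ and $T^{*}(1)$ to lie in $\mathrm{BMO}$ with norm $\lesssim\|\nabla f\|_{L^\infty}$; the $T(1)$ theorem then gives the $L^p$ bound. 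One may instead quote the classical Calder\'on commutator estimate directly, cf. \cite{Chez,che1}. The hard part is precisely this: the naive paraproduct bookkeeping that closes $(2)$ leaves, in the derivative case $(1)$, a contribution that is not spectrally localized and whose $L^p$-boundedness genuinely exploits the cancellation encoded in the kernel of $\mathcal{R}_{ij}$, not merely its size; everything else is routine Littlewood--Paley algebra combined with Lemma \ref{CE}.
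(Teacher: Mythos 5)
Your proof of assertion $(2)$ is essentially the paper's: Bony's decomposition, the convolution--commutator Lemma \ref{CE} to gain the factor $2^{-q}$ on $\sum_q[\mathcal{R}_{ij},S_{q-1}f]\Delta_q g$, and Bernstein's inequality for the remaining pieces; your regrouping of the second paraproduct with the remainder is cosmetic, and (a small inaccuracy) the constraint $p\ge \frac{2}{1-\EE}$ is needed in those remaining pieces as well, not only in the first one. For assertion $(1)$ you take a genuinely different route. The paper works directly on the kernel: it writes $\mathcal{R}_{ij}=\mathcal{R}_i\mathcal{R}_j$, uses $[\mathcal{R}_{ij},f]\partial_k=\mathcal{R}_j\{[\mathcal{R}_i,f]\partial_k\}+[\mathcal{R}_j,f]\partial_k\mathcal{R}_i$, and integrates by parts in $[\mathcal{R}_i,f]\partial_k g$ to produce $-\mathcal{R}_i(g\,\partial_k f)$ plus an operator with kernel $\partial_kK_i(x-y)\big(f(y)-f(x)\big)$, which falls squarely under Theorem 2 of \cite{Cald22} (even kernel, homogeneous of degree $-d-1$), chosen precisely to make the dependence of the constant on $\|\nabla f\|_{L^\infty}$ explicit. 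Your Littlewood--Paley splitting gives a self-contained treatment of the paraproduct part via the square-function/Fefferman--Stein machinery, but the non-localized remainder $\sum_q[\mathcal{R}_{ij},\Delta_qf]\,\partial_kS_{q+2}g$ still sends you back to the Calder\'on commutator theorem (or $T(1)$), so the decomposition does not bypass the deep ingredient; moreover the kernel bound you invoke (a weight of size $O(\|\nabla f\|_{L^\infty}|x-y|)$ against $k_{ij}(x-y)$) describes the \emph{full} commutator after integration by parts rather than the truncated remainder, and the membership of $T(1)$ and $T^*(1)$ in BMO with norm $\lesssim\|\nabla f\|_{L^\infty}$ is asserted, not verified. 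The cleanest repair is the one you mention in passing and which is in substance what the paper does: apply the classical commutator estimate to the whole operator $[\mathcal{R}_{ij},f]\partial_k$ (which makes the decomposition in $(1)$ unnecessary), or keep your decomposition and bound the remainder as the difference of the full commutator and the principal piece, both of which you have already controlled.
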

\begin{proof}

{\bf{$(1)$}} This result follows  from Theorem 1 of  \cite{Cald22}, but in this theorem the dependence of the constant with respect to the norm of $f$ is not precised. However we can obtain our estimate from Theorem 2 of the same paper \cite{Cald22} and we shall outline  in the next lines how to reduce our problem to this case. Let $K_{i}$ denote the Kernel of Riesz transform $\mathcal{R}_{i}\triangleq\partial_i\sqrt{-\Delta}$ which is odd, homogeneous of order $-d$ and belongs to $C^\infty(\RR^d\backslash\{0\})$. Now it is easy to check that 
$$
[\mathcal{R}_{i}, f]\partial_kg(x)=\int_{\RR^2} K_{i}(x-y)\big( f(y)-f(x)\big)\partial_{y_k} g(y) dy.
$$
Thus using integration by parts yields
\begin{eqnarray*}
[\mathcal{R}_{i}, f]\partial_kg(x)&=&-\mathcal{R}_{i}(g\partial_{x_k} f)+\int_{\RR^2} (\partial_{y_k}K_{i})(x-y)\big( f(y)-f(x)\big) g(y) dy\\
&\triangleq&\hbox{I}+\hbox{II}.
\end{eqnarray*}
To estimate the first term we use the continuity of $\mathcal{R}_i: L^p\to L^p$  for $p\in]1,\infty[$ and therefore
\begin{eqnarray*}
\|\hbox{I}\|_{L^p} &\le& C\|g\partial_{x_k} f\|_{L^p}\\
&\le& C\|\nabla f\|_{L^\infty}\|g\|_{L^p}.
\end{eqnarray*}
As regards the second term we shall use Theorem 2 of \cite{Cald22} which is valid in our context since the \mbox{map $x\mapsto \partial_{x_k}K_{i}(x)$} is even,  homogeneous of degree $-d-1$, locally integrable in $\RR^d\backslash\{0\}.$ Consequently
$$
\|\hbox{II}\|_{L^p}\le C\|\nabla f\|_{L^\infty}\|g\|_{L^p}.
$$
Putting together the previous estimates  gives the following result
\begin{equation}\label{riz1}
\big{\|}[\mathcal{R}_{i}, f]\partial_kg\big{\|}_{L^p}\lesssim \|\nabla f\big{\|}_{L^\infty}\|g\|_{L^p}.
\end{equation}
Now let us come back to the iterative Riesz transform $\mathcal{R}_{ij}=\mathcal{R}_i\mathcal{R}_j$ and write

\begin{eqnarray*}
\big[\mathcal{R}_{ij}, f\big]\partial_kg=\mathcal{R}_j\big\{ [\mathcal{R}_{i}, f\big]\partial_kg \big\}+[\mathcal{R}_{j}, f\big]\partial_k\mathcal{R}_ig.
\end{eqnarray*}

Using the preceding result \eqref{riz1}  combined with the continuity of Riesz transforms on $L^p$ lead to the desired result.
\vspace{0,3cm}

{\bf{$(2)$}} We shall use the para-differential calculus through  Bony's decomposition,
 \begin{eqnarray*}
\big[\mathcal{R}_{ij}, f\big]g&=&\sum_{q\in\NN}\big[\mathcal{R}_{ij}, S_{q-1}f\big]\Delta_qg+\sum_{q\in\NN}\big[\mathcal{R}_{ij}, \Delta_{q}f\big]S_{q-1}g
+\sum_{q\ge-1}\big[\mathcal{R}_{ij}, \Delta_{q}f\big]\tilde{\Delta}_q g\\
&\triangleq&\sum_{q\in\NN}\pi_{1}^{q}+\sum_{q\in\NN}\pi_{2}^{q}+\sum_{q\ge-1}\pi_{3}^{q}\\
&\triangleq&\pi_1+\pi_2+\pi_3.
\end{eqnarray*}
To estimate the first term $\pi_1^q$ we use its convolution structure,
$$
\pi_{1}^q=h_q\star (S_{q-1}f\Delta_{q}g)-S_{q-1}f\,(h_q\star\Delta_{q}g),
$$
\noindent where $\widehat{h}_q(\xi)=\frac{\xi_i\xi_j}{|\xi|^2}\psi(2^{-q}\xi)$ and $ \psi$ is a smooth function supported in an annulus centered at  zero. Therefore $h_q(x)=2^{2q}h(2^{q}x)$ with $h\in\mathcal{S}$. Then in view of the Lemma \ref{CE} we get,
\begin{equation*}
\Vert \pi_{1}^{q}\Vert_{L^{\infty}}\lesssim\Vert xh_q\Vert_{L^{1}}\Vert\nabla S_{q-1}f\Vert_{L^{\infty}}\Vert\Delta_{q}g\Vert_{L^\infty}. 
\end{equation*}
Using the fact $\Vert xh_q\Vert_{L^{1}}=2^{-q}\Vert xh\Vert_{L^{1}}$ combined with Bernstein inequality we obtain with the assumption $p\geq\frac{2}{1-\EE}$
\begin{eqnarray*}
2^{q\EE}\Vert \pi_{1}^q\Vert_{L^{\infty}}
&\lesssim&2^{q(-1+\frac{2}{p}+\EE)}\Vert\Delta_{q}g\Vert_{L^p}\|\nabla f\|_{L^\infty}\\
&\lesssim& \Vert g\Vert_{L^p}\|\nabla f\|_{L^\infty}.
\end{eqnarray*}
Since
$$
\Delta_j\sum_{q\in\NN}\pi_{1}^q=\sum_{\vert j-q\vert\le4}\pi_{1}^{q}
$$ 
then it follows
\begin{eqnarray}\label{Ay05}
\Vert\pi_1\Vert_{C^{\varepsilon}}&\lesssim&  \Vert g\Vert_{L^p}\|\nabla f\|_{L^\infty}.
%&\lesssim&
%\Vert\nabla\Delta_{-1}v\Vert_{L^{\infty}}+\Vert\omega\Vert_{L^{\infty}}\sum_{q\in\NN}2^{q\big(-1+\varepsilon+\frac{2}{p}\big)}(q+2)\Vert\rho\Vert_{L^p}\\
%&\lesssim&\Big(\Vert \omega\Vert_{L^2}+\Vert\omega\Vert_{L^{\infty}}\Big)\Vert\rho\Vert_{L^p},
\end{eqnarray}
Concerning the second term $\pi_{2}^{q}$, we  follow the same steps of the preceding case
\begin{eqnarray*}
2^{q\EE}\Vert\pi_{2}^q\Vert_{L^{\infty}}
&\lesssim&2^{q(-1+\EE)}\Vert S_{q-1}g\Vert_{L^\infty}\|\nabla \Delta_qf\|_{L^\infty}\\
&\lesssim&2^{q(-1+\EE+\frac2p)}\Vert S_{q-1}g\Vert_{L^p}\|\nabla f\|_{L^\infty}\\
&\lesssim& \Vert g\Vert_{L^p}\|\nabla f\|_{L^\infty}.
\end{eqnarray*}

Now we can conclude in a similar way to the first term $\pi_1$ that
\begin{equation}\label{Ay06}
\Vert\pi_2\Vert_{C^{\varepsilon}}\lesssim \Vert g\Vert_{L^p}\|\nabla f\|_{L^\infty}.
\end{equation}
Let us now move to the third term $\pi_3$. By the definition of the remainder term we have
\begin{eqnarray*} 
\Vert\Delta_q\pi_3\Vert_{L^\infty}&\lesssim&\sum_{k\geq q-3}\Vert\big[\mathcal{R}_{ij}, \Delta_{k}f\big]\tilde{\Delta}_{k}g\Vert_{L^\infty}+\big\|\big[\mathcal{R}_{ij}, \Delta_{-1}f\big]\tilde{\Delta}_{-1}g\big\|_{L^\infty}\chi_{[-1,4]}(q)\\
&\lesssim&\Big\{\sum_{k\geq q-3}\big\|\mathcal{R}_{ij}(\Delta_{k}f\,\tilde{\Delta}_{k}g)\big\|_{L^\infty}+\sum_{k\geq q-3}\big\|\Delta_{k}f(\mathcal{R}_{ij}\tilde{\Delta}_{k}g)\big\|_{L^\infty}\Big\}\\&+&\big\|\big[\mathcal{R}_{ij}, \Delta_{-1}f\big]\tilde{\Delta}_{-1}g\big\|_{L^\infty}\chi_{[-1,4]}(q)\\
&\triangleq&\mathcal{I}_q+\mathcal{II}_q.
\end{eqnarray*}
By Bernstein inequality and the continuity of Riesz transforms over $L^p$ we get
\begin{eqnarray}\label{Ay03}
\nonumber 2^{q\EE}\mathcal{I}_q&\lesssim&\|g\|_{L^p}2^{q\EE}\sum_{k\geq q-3}2^{k\frac2p}\|\Delta_kf\|_{L^\infty}\\
\nonumber &\lesssim&\|g\|_{L^p}\|\nabla f\|_{L^\infty}2^{q\EE}\sum_{k\geq q-3}2^{k(\frac2p-1)}\\
&\lesssim&\|g\|_{L^p}\|\nabla f\|_{L^\infty}.
\end{eqnarray}
For the low frequency term $\mathcal{II}_q$ we use once again Bernstein inequality combined with the continuity of Riesz transforms over $L^p$
\begin{eqnarray*}
\|\mathcal{II}_q\|_{L^p}&\lesssim& \|\Delta_{-1}f\|_{L^\infty}\|\tilde\Delta_{-1}g\|_{L^p}\\
&\le&  \|f\|_{L^\infty}\|g\|_{L^p}.
\end{eqnarray*}

Consequently we find
\begin{equation}\label{Ay08}
\|\pi_{3}\|_{C^\EE}\lesssim  \big(\|f\|_{L^\infty}+\|\nabla f\|_{L^\infty}\big)\|g\|_{L^p}.	
\end{equation}
Therefore putting together  (\ref{Ay05}), (\ref{Ay06}) and (\ref{Ay08}) yields 
\begin{equation*}
\big\Vert\big[\mathcal{R}_{ij},f\big]g\big\Vert_{C^{\varepsilon}}\lesssim(\|f\|_{L^\infty}+\|\nabla f\|_{L^\infty}\big)\|g\|_{L^p}.
\end{equation*}
This completes the proof of the commutator estimate. 

\end{proof}
%We start this section by the following technical lemma whose proof can be found in \cite{HKR1}.
%\begin{lem}\label{CE} Let $(a, b)\in[1, \infty]^2$ such that $a\ge b'$ with $\frac{1}{b}+\frac{1}{b'}=1$. Given $f, g$ and $h$ three functions such that $\nabla f\in L^{a}$, $g\in L^{b}$ and $xh\in L^{b'}$. Then,
%$$
%\Vert h\star(fg)-f(h\star g)\Vert_{L^a}\lesssim\Vert xh\Vert_{L^{b'}}\Vert\nabla f\Vert_{L^{a}}\Vert g\Vert_{L^{b}}.
%$$
%\end{lem}
Now, we introduce the following  operator $\mathcal{L}:=\partial_i\Delta^{-1}$ which is of convolution type and our aim is to establish a commutator estimate between this singular operator and the convection \mbox{operator $v\cdot\nabla$. }
 \begin{lem}\label{An1} Let $\varepsilon\in]0, 1[,\,p, \in ]1,\infty[.$ Let $ \rho :\RR^2\to\RR $ be a smooth function  and $v$ be a smooth divergence-free vector field \mbox{on $\RR^2$}.  Then   

$$
\big\Vert\big[\mathcal{L}, v\cdot\nabla \big]\rho\big\Vert_{C^{\EE}}\lesssim\|v\|_{C^{\EE}}\Vert \rho\Vert_{L^\infty\cap L^p}.
$$
%and
%$$
%\big\Vert\big[\mathcal{L}, v\cdot\nabla \big]f\big\Vert_{L^p}\lesssim\|\nabla v\|_{L^\infty}\Vert f\Vert_{L^p}
%$$

\end{lem}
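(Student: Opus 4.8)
The statement to be proven is the commutator estimate
$$
\big\Vert\big[\mathcal{L}, v\cdot\nabla \big]\rho\big\Vert_{C^{\EE}}\lesssim\|v\|_{C^{\EE}}\Vert \rho\Vert_{L^\infty\cap L^p},
$$
where $\mathcal{L}=\partial_i\Delta^{-1}$ and $v$ is divergence-free. The natural first move is to rewrite the commutator using $\textnormal{div}\,v=0$: since $v\cdot\nabla\rho=\textnormal{div}(v\rho)$ and $\mathcal{L}$ commutes with constant-coefficient derivatives, we have
$$
[\mathcal{L}, v\cdot\nabla]\rho = \mathcal{L}\,\textnormal{div}(v\rho) - v\cdot\nabla\mathcal{L}\rho = \partial_i\Delta^{-1}\partial_k(v^k\rho) - v^k\partial_k\partial_i\Delta^{-1}\rho = \big[\mathcal{R}_{ik},\,v^k\big]\rho,
$$
where $\mathcal{R}_{ik}=\partial_i\partial_k\Delta^{-1}$ is the iterated Riesz transform (summation over $k$). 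Thus the problem is reduced \emph{exactly} to a Calder\'on-type commutator of the form already treated in Lemma \ref{cald}(2), with the roles $f\leftrightarrow v^k$ and $g\leftrightarrow\rho$.

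The second step is to invoke that lemma. Lemma \ref{cald}(2) gives, for $\EE\in]0,1[$ and $p\geq\frac{2}{1-\EE}$,
$$
\big\Vert\big[\mathcal{R}_{ik},v^k\big]\rho\big\Vert_{C^\EE}\lesssim\big(\|v^k\|_{L^\infty}+\|\nabla v^k\|_{L^\infty}\big)\|\rho\|_{L^p}\lesssim \|v\|_{\textnormal{Lip}}\|\rho\|_{L^p}.
$$
This is \emph{not quite} the claimed inequality: the right-hand side there carries $\|v\|_{\textnormal{Lip}}$ rather than $\|v\|_{C^\EE}$, and the lemma as stated imposes the constraint $p\geq\frac{2}{1-\EE}$, whereas Lemma \ref{An1} claims the result for all $p\in]1,\infty[$. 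So the real content is to redo the Littlewood--Paley / Bony decomposition of $[\mathcal{R}_{ik},v^k]\rho = \pi_1+\pi_2+\pi_3$ exactly as in the proof of Lemma \ref{cald}(2), but keeping track of $v$ only through its $C^\EE$ norm and allowing $\rho\in L^\infty\cap L^p$. For the paraproduct term $\pi_1^q = [\mathcal{R}_{ik},S_{q-1}v^k]\Delta_q\rho$, one uses the convolution-kernel structure and Lemma \ref{CE}: with $h_q(x)=2^{2q}h(2^qx)$, $\|xh_q\|_{L^1}=2^{-q}\|xh\|_{L^1}$, so $\|\pi_1^q\|_{L^\infty}\lesssim 2^{-q}\|\nabla S_{q-1}v\|_{L^\infty}\|\Delta_q\rho\|_{L^\infty}$; but now estimate $\|\nabla S_{q-1}v\|_{L^\infty}\lesssim 2^{q(1-\EE)}\|v\|_{C^\EE}$ by Bernstein, giving $2^{q\EE}\|\pi_1^q\|_{L^\infty}\lesssim\|v\|_{C^\EE}\|\rho\|_{L^\infty}$. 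The term $\pi_2^q=[\mathcal{R}_{ik},\Delta_q v^k]S_{q-1}\rho$ is handled identically, bounding $\|\nabla\Delta_q v\|_{L^\infty}\lesssim 2^{q(1-\EE)}\|v\|_{C^\EE}$ and using $\|S_{q-1}\rho\|_{L^\infty}\lesssim\|\rho\|_{L^\infty}$ (here the low-frequency part $L^p$ is needed only to control $S_{-1}$-type pieces, or one simply uses $\rho\in L^\infty$). For the remainder $\pi_3$, expand $[\mathcal{R}_{ik},\Delta_k v^k]\tilde\Delta_k\rho$ as $\mathcal{R}_{ik}(\Delta_kv\,\tilde\Delta_k\rho)-\Delta_kv\,\mathcal{R}_{ik}\tilde\Delta_k\rho$; for $k\geq0$ use continuity of $\mathcal{R}_{ik}$ on $L^\infty$ up to a log — more safely on $L^p$ with Bernstein: $\|\Delta_kv\|_{L^\infty}\lesssim 2^{-k\EE}\|v\|_{C^\EE}$ and sum the geometric series $\sum_{k\geq q-3}2^{k(\EE\cdot 0)}\cdots$; the $L^\infty\cap L^p$ hypothesis on $\rho$ makes the low-frequency ($k=-1$) piece harmless. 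Summing the three pieces yields the bound with $\|v\|_{C^\EE}\|\rho\|_{L^\infty\cap L^p}$ and no lower restriction on $p$.

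\textbf{Main obstacle.} The delicate point is the gain of a full derivative: in the $\pi_1$ and $\pi_2$ estimates the commutator structure buys a factor $2^{-q}$ (via $\|xh_q\|_{L^1}\sim 2^{-q}$ in Lemma \ref{CE}), and this must exactly cancel the $2^{q(1-\EE)}$ coming from $\|\nabla S_{q-1}v\|_{L^\infty}$, leaving precisely $2^{-q\EE}$ to absorb the factor $2^{q\EE}$ in the $C^\EE$ norm — so the estimate is \emph{scaling-critical} and there is no room to waste. The only subtlety beyond Lemma \ref{cald}(2) is that there $\nabla v\in L^\infty$ was available, while here we have only $v\in C^\EE$; the point is that in a commutator against a \emph{nonlocal operator of order} $0$, one does not need $\nabla v\in L^\infty$ — the $2^{-q}$ from the kernel plus Bernstein on the smooth frequency block suffice, which is exactly why $C^\EE$ regularity of $v$ is enough. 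The remainder term $\pi_3$ needs care only because Riesz transforms are not bounded on $L^\infty$; this is circumvented by passing through $L^p$ via Bernstein, which is where the hypothesis $\rho\in L^\infty\cap L^p$ (rather than merely $L^\infty$) enters. Once these bookkeeping points are handled, the proof is a routine transcription of the argument already written out for Lemma \ref{cald}(2).
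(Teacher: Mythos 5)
Your argument is correct, and after its opening move it follows the same Littlewood--Paley route as the paper; the one genuine difference is that first step. You use $\textnormal{div}\, v=0$ to rewrite $[\mathcal{L}, v\cdot\nabla]\rho=\sum_k[\mathcal{R}_{ik},v^k]\rho$, converting the problem into a commutator of an order-zero Riesz transform with a multiplication operator, so the gain of one derivative comes entirely from the kernel scaling $\Vert xh_q\Vert_{L^1}\sim 2^{-q}$ in Lemma \ref{CE}. The paper instead keeps the convection operator intact and applies Bony's decomposition to $[\mathcal{L}, v\cdot\nabla]\rho$ directly: its paraproduct blocks carry $\Delta_q\nabla\rho$ together with the order $-1$ kernel of $\mathcal{L}$ (so $\Vert xh_q\Vert_{L^1}\sim 2^{-2q}$), and one power of $2^{q}$ is returned by Bernstein applied to $\Delta_q\nabla\rho$. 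The two bookkeepings are scaling-identical and the exponents cancel exactly as you observe; your reduction is arguably tidier because it makes the kinship with Lemma \ref{cald}(2) literal rather than analogical, and you are right that that lemma cannot be cited as a black box (it costs $\Vert\nabla v\Vert_{L^\infty}$ and requires $p\geq 2/(1-\EE)$), the fix being precisely to bound $\Vert\nabla S_{q-1}v\Vert_{L^\infty}$ and $\Vert\nabla \Delta_{q}v\Vert_{L^\infty}$ by $2^{q(1-\EE)}\Vert v\Vert_{C^\EE}$ while keeping $\rho$ in $L^\infty$ in the paraproducts --- exactly what the paper does. One small caution on the remainder: Riesz transforms are not bounded on $L^\infty$, with or without a logarithmic loss; what saves $\pi_3$ is that $\mathcal{R}_{ik}\tilde\Delta_q$ acts by convolution with an $L^1$-normalized kernel once $\tilde\Delta_q\rho$ has annular frequency support, while the finitely many low-frequency blocks are sent from $L^p$ to $L^\infty$ by Bernstein together with the $L^p$-boundedness of $\mathcal{R}_{ik}$ --- this is the only place where $\rho\in L^p$ is needed, as you correctly indicate, and the sum $\sum_{k\geq q-3}2^{-k\EE}\lesssim 2^{-q\EE}$ closes because $\EE>0$.
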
 
\begin{proof} The proof will be done in the spirit of the preceding one.  From Bony's decomposition,
\begin{eqnarray*}
\big[\mathcal{L}, v\cdot\nabla\big]\rho&=&\sum_{q\in\NN}\big[\mathcal{L}, S_{q-1}v\cdot\nabla\big]\Delta_q\rho+\sum_{q\in\NN}\big[\mathcal{L}, \Delta_{q}v\cdot\nabla\big]S_{q-1}\rho
+\sum_{q\ge-1}\big[\mathcal{L}, \Delta_{q}v\cdot\nabla\big]\tilde{\Delta}_q\rho\\
&\triangleq&\sum_{q\in\NN}\pi_{1}^{q}+\sum_{q\in\NN}\pi_{2}^{q}+\sum_{q\ge-1}\pi_{3}^{q}\\
&\triangleq&\pi_1+\pi_2+\pi_3.
\end{eqnarray*}
To estimate the first term $\pi_1^q$ we use its convolution structure,
$$
\pi_{1}^q=h_q\star (S_{q-1}v\Delta_{q}\nabla\rho)-S_{q-1}v(h_q\star\nabla\Delta_{q}\rho),
$$
\noindent where $\widehat{h}_q(\xi)=\frac{\xi_i}{|\xi|^2}\psi(2^{-q}\xi)$ and $ \psi$ is a smooth function supported in an annulus with center zero. Therefore $h_q(x)=2^{q}h(2^{q}x)$ with $h\in\mathcal{S}$ and  in view of Lemma \ref{CE} we get,
\begin{eqnarray*}
\Vert \pi_{1}^{q}\Vert_{L^{\infty}}&\lesssim& 2^{-2q}\Vert\nabla S_{q-1}v\Vert_{L^{\infty}}\Vert\Delta_{q}\nabla\rho\Vert_{L^\infty}\\
&\lesssim& 2^{-q}\Vert\nabla S_{q-1}v\Vert_{L^{\infty}}\Vert\Delta_{q}\rho\Vert_{L^\infty}.
\end{eqnarray*}
Hence we obtain since $\EE<1$
\begin{eqnarray*}
2^{q\EE}\Vert\pi_{1}^q\Vert_{L^{\infty}}
&\lesssim&2^{q(-1+\EE)}\Vert\rho\Vert_{L^\infty}\sum_{-1\le j\le q-2}\Vert\nabla\Delta_{j}v\Vert_{L^{\infty}}\\
&\lesssim&2^{q(-1+\EE)}\Vert\rho\Vert_{L^\infty}\sum_{-1\le j\le q-2}2^{j(1-\EE)}\Vert v\Vert_{C^{\EE}}\\
&\lesssim&\Vert\rho\Vert_{L^\infty} \Vert v\Vert_{C^{\EE}}.
\end{eqnarray*}
Therefore we get
\begin{eqnarray}\label{Ay5}
\Vert\pi_1\Vert_{C^{\varepsilon}}&\lesssim& \Vert\rho\Vert_{L^\infty} \Vert v\Vert_{C^{\EE}}.
\end{eqnarray}
Concerning the second term $\pi_{2}^{q}$, we imitate the previous computations
%\begin{eqnarray*}
%\pi_{2}^{q}&=&\mathcal{L}(\Delta_{q}v\cdot\nabla S_{q-1}\rho)-\Delta_{q}v\cdot\nabla\mathcal{L}(S_{q-1}\rho)\\
%&\triangleq&\Lambda_{1}^{q}+\Lambda_{2}^{q}.
%\end{eqnarray*}
%For $\Lambda_1^{q}$ we combine the facts that $\mathcal{L}$ is a homogeneous operator of order $-1$ and  $\Delta_{q}v\cdot S_{q-1}\nabla\rho$ is supported in an annulus of size $2^q$. Then we get  by making use of  Bernstein inequality 
\begin{eqnarray*}
\Vert \pi_{2}^{q}\Vert_{L^{\infty}}&\lesssim& 2^{-2q}\Vert\nabla \Delta_qv\Vert_{L^{\infty}}\Vert S_{q-1}\nabla\rho\Vert_{L^\infty}\\
&\lesssim& \Vert \Delta_q v\Vert_{L^{\infty}}\Vert \rho\Vert_{L^\infty}.
\end{eqnarray*}
Therefore we obtain 
\begin{eqnarray*}
2^{q\EE}\Vert \pi_{2}^{q}\Vert_{L^{\infty}}&\lesssim&
\Vert v\Vert_{C^\EE}\Vert\rho\Vert_{L^\infty}
\end{eqnarray*}
and consequently
\begin{eqnarray}\label{Ay6}
\Vert\pi_2\Vert_{C^{\varepsilon}}&\lesssim& \Vert\rho\Vert_{L^\infty} \Vert v\Vert_{C^{\EE}}.
\end{eqnarray}
Let us now move to the third term $\pi_3$. By the definition of the remainder term we have
\begin{eqnarray*} 
\pi_3%&=&\sum_{q\in\NN}\big[\mathcal{L}, \Delta_{q}v\cdot\nabla\big]\tilde{\Delta}_{q}\rho+\big[\mathcal{L}, \Delta_{-1}v\cdot\nabla\big]\tilde{\Delta}_{-1}\rho\\
&=&\sum_{q\geq-1}\mathcal{L}\Div(\Delta_{q}v\,\tilde{\Delta}_{q}\rho)-\sum_{q\geq-1}\Delta_{q}v\cdot\nabla\mathcal{L}(\tilde{\Delta}_{q}\rho)\\
&\triangleq&\pi_3^1-\pi_3^2.
\end{eqnarray*}
By Bernstein inequality we obtain for $j\in \NN$
\begin{eqnarray*}
2^{j\varepsilon}\Vert\Delta_j\pi_3^1\Vert_{L^{\infty}}&\lesssim&2^{j\varepsilon}\sum_{q\geq j-4}\|\Delta_q v\|_{L^\infty}\|\tilde{\Delta}_{q}\rho\|_{L^\infty}\\
\nonumber&\lesssim&\|\rho\Vert_{L^{\infty}}\|v\|_{C^\varepsilon}\sum_{q\geq j-4}2^{(j-q)\varepsilon}\\
\nonumber&\lesssim&\Vert\rho\Vert_{L^{\infty}}\Vert v\Vert_{C^{\EE}}.
\end{eqnarray*}
For the low frequency we use the continuity of Riesz transforms over $L^p$
\begin{eqnarray*}
\Vert\Delta_{-1}\pi_3^1\Vert_{L^{\infty}}&\lesssim&\sum_{q\geq -1}\|\Delta_q v\|_{L^\infty}\|\tilde{\Delta}_{q}\rho\|_{L^p}\\
\nonumber&\lesssim&\Vert v\Vert_{C^{\EE}}\Vert\rho\Vert_{L^{p}}.
\end{eqnarray*}
Thus we find
\begin{eqnarray}\label{Ay3}
\Vert\pi_3^1\Vert_{C^\varepsilon}&\lesssim&\Vert v\Vert_{C^{\EE}}\Vert\rho\Vert_{L^p\cap L^\infty}.
\end{eqnarray}
As regards the  term $\pi_3^2$ we write
\begin{eqnarray*}
\pi_3^2&=&\sum_{q\geq2}\Delta_{q}v\cdot\nabla\mathcal{L}(\tilde{\Delta}_{q}\rho)+\sum_{q=-1}^1\Delta_{q}v\cdot\nabla\mathcal{L}(\tilde{\Delta}_{q}\rho)\\
&\triangleq&\pi_3^{2,1}+\pi_3^{2,2}.
\end{eqnarray*}
Since for $q\geq2$ the Fourier transform of $\tilde{\Delta}_{q}\rho$ is supported in an annulus of size $2^q$ then
\begin{eqnarray*}
\nonumber2^{j\varepsilon}\Vert\Delta_j\pi_3^{2,1}\Vert_{L^\infty}&\lesssim&2^{j\varepsilon}\sum_{q\geq j-4; q\geq2}\Vert\Delta_{q}v\Vert_{L^{\infty}}\Vert\nabla \mathcal{L}(\tilde{\Delta}_{q}\rho)\Vert_{L^\infty}\\
\nonumber&\lesssim& \|\rho\|_{L^\infty}2^{j\varepsilon}\sum_{q\geq j-4}\Vert\Delta_{q}v\Vert_{L^{\infty}}\\
&\lesssim&\|\rho\|_{L^\infty}\|v\|_{C^\varepsilon}
\end{eqnarray*}
For the term $\pi_3^{2,2}$ we get
\begin{eqnarray*}
\|\pi_3^{2,2}\|_{C^\varepsilon}&\lesssim& \sum_{q=-1}^1\|\Delta_{q}v\|_{L^\infty}\|\nabla\mathcal{L}(\tilde{\Delta}_{q}\rho)\|_{L^p}\\
&\lesssim&\|v\|_{C^\varepsilon}\|\rho\|_{L^p}.
\end{eqnarray*}
It follows that
\begin{eqnarray}\label{Ay4}
\Vert\pi_3^2\Vert_{C^\varepsilon}&\lesssim&\Vert v\Vert_{C^{\EE}}\Vert\rho\Vert_{L^{p}\cap L^\infty}.
\end{eqnarray}

Putting together (\ref{Ay3}) and (\ref{Ay4}) we find
\begin{eqnarray*}
\Vert \pi_3\Vert_{C^{\varepsilon}}&\lesssim&\|v\|_{C^\varepsilon}\|\rho\|_{L^p\cap L^\infty}.
\end{eqnarray*}
Combining this estimate with (\ref{Ay5}) and (\ref{Ay6})  yields for any $p\in]1,\infty[$and  $0<\EE<1,$
\begin{equation}\label{Ya1}
\big\Vert\big[\mathcal{L}, v\cdot\nabla\big]\rho\big\Vert_{C^{\varepsilon}}\lesssim\Vert v\Vert_{C^{\EE}}\Vert\rho\Vert_{L^{p}\cap L^\infty}.
\end{equation}
This completes the proof of the commutator estimate. 
\end{proof}

\begin{gracies}
The author would like to thank   Joan Verdera  for  the fruitful  discussion about the stationary patches and for  pointing out the valuable references \cite{Fran} and \cite{Rei}. Special thanks also go to Camil Muscalu who gave  me more insights and clarifications about  Calder\'on commutators.   \end{gracies}


\begin{thebibliography}{9999}


%\bibitem{ah} H. \textsc{Abidi}, T. \textsc{Hmidi}:
%{On the global well-posedness for Boussinesq System.} J.
%Diff. Equa.,   \textbf{233,   1} (2007),   p. 199-220.
%\bibitem{Ahar}   \textsc{D. Aharonov, M.M. Schiffer, L. Zalcma}.{\it  Potato kugel.} Israel J. Math. 40 (1981), no. 3-4, 331–339 (1982)
\bibitem{Alf} \textsc{H. Alfv\'en}. \textit{ Existence of electromagnetic-hydrodynamic waves.} Nature, vol. 150, 405, 1942.


\bibitem{Bis} D. \textsc{Biskamp}.{\it  Nonlinear Magnetohydrodynamics. Cambridge.} Cambridge Univ. Press, 1991
%\bibitem{bkm}\textsc{J. T. Beale, T. Kato, A. Majda}, \textit{ Remarks on the Breakdown of Smooth Solutions for the 3-D Euler Equations}. Commun. Math. Phys. \textbf{94}, p. 61-66, (1984).

\bibitem{BC} \textsc{ A. L. \textsc{Bertozzi,} P. \textsc{Constantin.}}\textit{ Global regularity for vortex patches.}  Comm. Math. Phys.  \textbf{152}  (1993),  no. 1, 19--28. 
\bibitem{b} J.-M. \textsc{Bony}.
{\it Calcul symbolique et propagation des singularit\'es pour les \'equations aux d\'eriv\'ees partielles non lin\'eaires.} Ann. de l'Ecole Norm. Sup., \textbf{14} (1981),  209--246.

\bibitem{B} J. \textsc{Burbea.} {\it Motions of vortex patches}, Lett.
Math. Phys. {\bf 6} (1982), 1--16.



\bibitem{Caf} R. E. \textsc{Caflisch}, I. \textsc{Klapper}, and G. \textsc{Steele}.{ \it Remarks on singularities, dimension and energy dissipation for ideal hydrodynamics and MHD}. Comm. Math. Phys., 184 (1997), 443--455.
%\bibitem{BS} L.  \textsc{Brandolese}, M.  \textsc{Schonbek}: {Large time decay and growth for solutions of a viscous Boussinesq system}. 	arXiv:1003.4921v2

%\bibitem{Brenier} Y. \textsc{Brenier}:
%{Optimal transport, convection,   magnetic relaxation and generalized Boussinesq equations.}  J. Nonlinear Sci. 19 (2009), no. 5, 547--570,

\bibitem{Cald22} A. P. \textsc{Calder\'on}. {\it Commutators of singular integral operators.} Proc. Nat. Acad. Sci. U.S.A. {\bf 53} (1965), \mbox{1092--1099.}

\bibitem{Casella}  E. \textsc{Casella}; P. \textsc{ Secchi}, P. \textsc{Trebeschi.} {\it Global classical solutions for MHD system.} J. Math. Fluid Mech. 5 (2003), no. 1, 70--91.

 \bibitem{Chan} S. \textsc{Chandrasekhar}.{\it  Hydrodynamic and hydromagnetic stability.} The International Series of Monographs on Physics Clarendon Press, Oxford 1961 xix+654 p.
 
 \bibitem{Chen1} Q. \textsc{Chen}, C. \textsc{ Miao}, Z. \textsc{Zhang}.  {\it{On the well-posedness of the ideal MHD equations in the Triebel-Lizorkin spaces.}} Arch. Ration. Mech. Anal. 195 (2010), no. 2, 561--578.
 
\bibitem{Chez}   J.-Y. \textsc{Chemin}. {\it Sur le mouvement des particules d'un fluide parfait incompressible bidimensionnel}.  Invent. Math. 103 (1991), no. 3, 599--629.

\bibitem{che1}  J.-Y. \textsc{Chemin}. {\it Perfect incompressible Fluids.} Oxford University Press 1998.

\bibitem{Coul} J.-F. \textsc{Coulombel,} A. \textsc{Morando,} P. \textsc{Secchi,} P.  \textsc{Trebeschi.} {\it A priori estimates for 3D incompressible current-vortex sheets.} Comm. Math. Phys. 311 (2012), no. 1, 247--275.

\bibitem{Dr} R.  \textsc{Danchin}. {\it Poches de tourbillon visqueuses.} J. Math. Pures Appl. (9) 76 (1997), no. 7,  609--647.

\bibitem{Dav}P. A. \textsc{Davidson}. {\it An Introduction to Magnetohydrodynamics}. Cambridge Texts in Applied Mathematics. Cambridge University Press, Cambridge, 2001.

%\bibitem{rdd} R. \textsc{Danchin}: {The inviscid limit for density-dependent incompressible fluids,}  Annales  Fac.  Sci.  Toulouse S?r. 6, 15 no. 4 (2006), p. 637-688.

\bibitem{Dw} N. \textsc{Depauw}. {Poche de tourbillon pour Euler 2D dans un ouvert à bord}. J. Math. Pures Appl. (9) 78 (1999), no. 3, 313--351.


\bibitem{Duren}  P.  \textsc{Duren}. {\it  Theory of $H^p$ spaces.} Pure and Applied Mathematics, Vol. 38 Academic Press, New York-London 1970.

\bibitem{Duvaut} G. \textsc{Duvaut}, J.-L.  \textsc{Lions}. {\it  In\'equations en thermo\'elasticit\'e et magn\'etohydrodynamique}.  Arch. Rational Mech. Anal. 46 (1972), 241--279.

\bibitem{Ela} W. M. \textsc{Elsasser}. {\it The hydromagnetic equations.} Phys. Rev., 79 (1950), p. 183.

\bibitem{Fef} C. L. \textsc{Fefferman,} D. S. \textsc{McCormick,} J. C. \textsc{Robinson,} J. L. \textsc{Rodrigo}. {\it Higher order commutator estimates and local existence for the non-resistive MHD equations and related models.} 	arXiv:1401.5018 [math.AP].

%\bibitem{Feireisl} E. \textsc{Feireisl}, A. \textsc{Novotny}: The Oberbeck-Boussinesq Approximation as a singular limit of the full Navier-Stokes-Fourier system. J. Math. Fluid. Mech. {\bf 11} (2009), p. 274-302.
 \bibitem{Fran} L.E. \textsc{Fraenkel.}  {\it An introduction to maximum principles and symmetry in elliptic problems.} Cambridge Tracts in Mathematics, 128. Cambridge University Press, Cambridge, 2000. 
 
 \bibitem{GP} P. \textsc{Gamblin},  X. \textsc{Saint Raymond.}   {\it On three-dimensional vortex patches.}  Bull. Soc. Math. France 123 (1995), no. 3, 375--424.
 
 \bibitem{hmidipoche} T. \textsc{Hmidi}.  {\it R\'egularit\'e h\"old\'erienne des poches de tourbillon visqueuses.} J. Math. Pures Appl. (9) 84 (2005), no. 11, p. 1455--1495.

\bibitem{HKR1}T. \textsc{Hmidi}, S.\textsc{Keraani},  F. \textsc{Rousset}. {\it Global well-posedness for Euler-Boussinesq system with critical dissipation.} Comm. Partial Differential Equations, 36 (2011), 420--445.

 \bibitem{HMV} T. \textsc{Hmidi}, J. \textsc{Mateu}, J. \textsc{Verdera}. {\it Boundary Regularity of Rotating Vortex Patches}. Arch. Ration. Mech. Anal. 209 (2013), no. 1, 171--208.
 
\bibitem{Jiu}   Q. \textsc{Jiu,} D.  \textsc{Niu}. {\it Mathematical results related to a two-dimensional magneto-hydrodynamic equations. }Acta Math. Sci. Ser. B Engl. Ed. 26 (2006), no. 4, 744--756.

\bibitem{Kato}  T. \textsc{Kato,} G. \textsc{ Ponce}. {\it Commutator estimates and the Euler and Navier- Stokes equations}.  Comm. Pure Appl. Math. 41 (1988) 7, 891--907.

  \bibitem{Kirc} G.  \textsc{Kirchhoff.}{\it Vorlesungen uber mathematische Physik} (Leipzig, 1874).
 
 \bibitem{Kozono} H. \textsc{ Kozono.}{\it  Weak and classical solutions of the two-dimensional magnetohydrodynamic equations.} Tohoku Math. J. (2) 41 (1989), no. 3, 471--488.
 
 \bibitem{Land}L.D.  \textsc{Landau,} E. M.   \textsc{Lifshitz.} {\it Course of theoretical physics}. Vol. 8. Second Russian edition revised by
Lifshits and L. P. Pitaevskiı. Oxford: Pergamon Press, 1984.
 
\bibitem{Zhang} F.  \textsc{Lin,} L.  \textsc{Xu,} P.   \textsc{Zhang.} {\it Global small solutions to 2-D incompressible MHD system}, arxiv.

 \bibitem{Mia1} C.  \textsc{Miao}, B.  \textsc{Yuan}.{\it  On the well-posedness of the Cauchy problem for an MHD system in Besov spaces.} Math. Methods Appl. Sci. 32 (2009), no. 1, 53--76.

\bibitem{Mia2} C.  \textsc{Miao}, B.  \textsc{Yuan}.{\it Well-posedness of the ideal MHD system in critical Besov spaces.} Methods Appl. Anal. 13 (2006), no. 1, 89--106. 


\bibitem{Mor} A. \textsc{Morando}, Y. \textsc{Trakhinin}, P. \textsc{Trebeschi}. {\it Stability of incompressible current-vortex sheets}. J. Math. Anal. Appl. 347 (2008) 502--520.


%\bibitem{hk} T. \textsc{Hmidi},   S. \textsc{Keraani}: {On the global well-posedness of the Boussinesq system with zero viscosity,} Indiana Univ. Math. J.  58 (2009), no. 4, p. 1591-1618.

%\bibitem{Sch} M.  \textsc{Schonbek}: {$L^2$ decay for weak solutions of the Navier-Stokes equations}. 	Arch. Rational Mech. Anal. {\bf 88} N. 3, (1985), 209-222.
\bibitem{P} Ch.  \textsc{Pommerenke.}  {\it Boundary behaviour of conformal maps,}  Springer-Verlag, Berlin, 1992.

 \bibitem{Rei} W.  \textsc{Reichel.} {Characterization of balls by Riesz-potentials.} Ann. Mat. Pura Appl. (4) 188 (2009), no. 2, 235--245.

\bibitem{Saf} P. G. \textsc{Saffman}. {\it Vortex dynamics}. Cambridge Monographs on Mechanics and Applied Mathematics. Cambridge University Press, New York, 1992.

\bibitem{Sch} P.G. \textsc{Schmidt}. {\it On a magnetohydrodynamic problem of Euler type}.  J. Differential Equations 74 (1988) 2, 318--335.

\bibitem{Secc} P. \textsc{Secchi}. {\it On the equations of ideal incompressible magnetohydrodynamics}. Rend. Sem. Mat. Univ. Padova 90, (1993)103--119.

\bibitem{Tem}M. \textsc{Sermange,} R. \textsc{Temam.}{\it Some mathematical questions related to the MHD
equations.} Comm. Pure Appl. Math., 36 1983 (5), 635--664.

\bibitem{WS} S. E. \textsc{Warschawski.}
{\it  On the higher derivatives at the boundary in conformal
mapping,} Trans. Amer. Math. Soc. {\bf 38} (1935), no. 2, 310--340.


 \bibitem{Wu} J. \textsc{Wu.} {\it Viscous and inviscid magnetohydrodynamics equations.}
J. Anal. Math. 73 (1997), 251--265. 


\bibitem{Y1} Y.  \textsc{Yudovich.} {\it Nonstationary flow of an ideal incompressible liquid}. Zh. Vych. Mat., {\bf 3} (1963), 1032--1066.

 \end{thebibliography}
\end{document}